\newtheorem{theorem}{Theorem}[section]
\newtheorem{proposition}{Proposition}[section]
\newtheorem{lemma}{Lemma}[section]
\newtheorem{definition}{Definition}[section]
\newtheorem{remark}{Remark}[section]
\numberwithin{equation}{section} \numberwithin{theorem}{section}
\numberwithin{proposition}{section} \numberwithin{lemma}{section}
\numberwithin{corollary}{section}
\numberwithin{definition}{section} \numberwithin{remark}{section}
\newcommand{\R}{\mathbb{R}}
\newcommand{\C}{{\mathcal C}}
\newcommand{\Ccap}{{\rm Cap}}
\newcommand{\diam}{{\rm diam}}
\newcommand{\D}{{\mathcal D}}
\newcommand{\ol}{\overline}
\newcommand{\Ms}{{\mathbb M}^{2{\times}2}_{\rm sym}}
\author[J.-F. Babadjian]{Jean-Fran\c cois Babadjian}
\author[F. Iurlano]{Flaviana Iurlano}
\author[A. Lemenant]{Antoine Lemenant}
\address[J.-F. Babadjian]{Laboratoire de Math\'ematiques d'Orsay, Univ. Paris-Sud, CNRS, Universit\'e Paris-Saclay, 91405 Orsay, France.}
\email{jean-francois.babadjian@math.u-psud.fr}
\address[F. Iurlano]{Sorbonne Universit\'e, CNRS, Universit\'e de Paris, Laboratoire Jacques-Louis Lions, F-75005 Paris, France}
\email{iurlano@ljll.math.upmc.fr}
\address[A. Lemenant]{Universit\'e Paris Diderot -- Paris 7, CNRS, UMR 7598 Laboratoire Jacques-Louis Lions, Paris, F-75005, France}
\email{lemenant@ljll.univ-paris-diderot.fr}
\date{\today}
\title{Partial regularity for the crack set minimizing the two-dimensional Griffith energy}
\begin{document}

\begin{abstract} In this paper we prove a  $\mathcal C^{1,\alpha}$ regularity result for minimizers of the planar Griffith functional arising from a variational model of brittle fracture. We prove that  any isolated connected component of the crack, the singular set of a minimizer, is locally a  $\mathcal C^{1,\alpha}$ curve outside a set of zero Hausdorff measure.
\end{abstract}

\maketitle

\tableofcontents

 \section{Introduction}

Following the original Griffith theory of brittle fracture \cite{Griffith}, the variational approach introduced in \cite{FM} rests on the competition between a bulk energy, the elastic energy stored in the material, and a dissipation energy which is propositional to the area (the length in 2D) of the crack. In a planar elasticity setting, the Griffith energy  is defined by
$$\mathcal{G}(u,K):=\int_{\Omega\setminus K}\mathbf A e(u):e(u)\, dx + \mathcal{H}^{1}(K),$$
where $\Omega\subset \R^2$, which is bounded and open, stands for the reference configuration of a linearized elastic body, and  $\mathbf A$ is a suitable elasticity tensor. Here, $e(u)=(\nabla u+\nabla u^T)/2$ is the elastic strain, the symmetric gradient of the displacement $u:\Omega\setminus K \to \R^2$ which is defined outside the crack $K \subset \overline \Omega$. This energy functional falls within the framework of free discontinuity problems, and it is defined on pairs function/set
$$(u,K)\in \mathcal{A}(\Omega):=\{K\subset {  \overline \Omega} \text{ is closed and } u \in LD(\Omega' \setminus K)\},$$
where $\Omega' \supset \overline \Omega$ is a bounded open set (see  \eqref{eq:LD} for a precise definition of the space $LD$ of functions of Lebesgue deformation).

Minimizers of the Griffith energy have attracted a lot of attention in the last years. Although very close to its scalar analogue, which is known as the Mumford-Shah functional, the existence of a global minimizer $(u,K)\in \mathcal{A}(\Omega)$ (with a prescribed Dirichlet boundary condition) was proved only very recently in \cite{CC2,CC,CCI,CFI,FS} (see Section~\ref {section_statement} for details). It was also established in the meantime that the crack set $K$ is $\mathcal H^1$-rectifiable and Ahlfors regular.

\medskip

The main result of this paper is the following partial regularity property for the crack $K$. 

\begin{theorem}\label{mainTH} 
Let $\Omega \subset \R^2$ be a bounded and simply connected open set with $\mathcal C^1$ boundary, let $\psi \in W^{1,\infty}(\R^2;\R^2)$ be a boundary data, and let $\mathbf A$ be a fourth order elasticity tensor of the form
$$\mathbf A \xi= \lambda ({\rm tr} \xi) I +2\mu \xi \quad \text{ for all }\xi \in \mathbb M^{2 \times 2}_{\rm sym},$$
where $\mu>0$ and $\lambda+\mu>0$. Let $(u,K)\in \mathcal{A}(\Omega)$ be a solution to the minimization problem
$$\inf\left\{\mathcal G(v,K'): \quad (v,K') \in \mathcal A(\Omega), \, v=\psi \text{ a.e. in }\Omega' \setminus\overline \Omega\right\}.$$
Then for every isolated connected component $\Gamma$ of ${K\cap \Omega}$ there exist $\alpha\in (0,1)$ 
and an exceptional relatively closed set $Z \subset \Gamma$ such that 
 $\mathcal{H}^1(Z)=0$ and $\Gamma \setminus Z$ is locally a $\mathcal C^{1,\alpha}$ curve.
\end{theorem}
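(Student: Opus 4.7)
My strategy follows the $\varepsilon$-regularity / excess-decay paradigm used for the Mumford--Shah functional (De Giorgi, Bonnet, Ambrosio--Fusco--Pallara, David), but the vectorial character of the problem and the fact that only the symmetric gradient of $u$ is controlled force substantial adaptations. The central object is a combined local excess
\begin{equation*}
\omega(x,r) := \beta(x,r) + \frac{1}{r}\int_{B(x,r)\setminus K} \mathbf{A} e(u):e(u)\, dx,
\end{equation*}
where $\beta(x,r) := \inf_L r^{-1}\sup_{y\in K\cap B(x,r)}\mathrm{dist}(y,L)$ is the standard flatness with the infimum over affine lines through $x$. I aim to prove an $\varepsilon$-regularity statement of the form: there exist $\varepsilon_0>0$, $\tau\in(0,1)$ and $\alpha\in(0,1)$ such that if $\omega(x_0,r_0) < \varepsilon_0$ for some $x_0\in \Gamma$ and some small $r_0>0$, then $\omega(x_0,\tau r_0) \le \tau^{2\alpha}\omega(x_0,r_0)$. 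Iterating this decay and applying the Campanato characterisation translates immediately into the $\mathcal{C}^{1,\alpha}$ graph property of $K$ near $x_0$.

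With the $\varepsilon$-regularity in hand, Theorem~\ref{mainTH} reduces to showing that
$$Z := \left\{x\in \Gamma : \liminf_{r\to 0}\omega(x,r) \ge \varepsilon_0\right\}$$
is $\mathcal{H}^1$-negligible. The bulk contribution tends to zero at $\mathcal{H}^1$-a.e.\ point of $\Gamma$ by Ahlfors regularity of $K$ and the integrability of $\mathbf{A}e(u):e(u)$. The flatness $\beta$ tends to zero at $\mathcal{H}^1$-a.e.\ point of $\Gamma$ because $K$ is rectifiable and $\Gamma$ is a connected component: connectedness upgrades approximate tangency to uniform flatness at small scales, which is precisely where the ``isolated connected component'' assumption is essential, since it prevents the accumulation of foreign pieces of $K$ that would otherwise spoil the one-sided flatness.

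The decisive step is the decay itself, which I would establish by compactness and contradiction. Assuming a sequence $(u_n,K_n,x_n,r_n)$ of rescaled minimizers with vanishing excess $\omega_n\to 0$ but with $\omega$ at scale $\tau r_n$ not shrinking proportionally, I rescale to the unit ball and normalize $u_n$ by subtracting suitable rigid motions. I would then extract: (i) a Hausdorff limit $K_\infty$ of the rescaled cracks which, because $\beta\to 0$, is forced to be a diameter of $B_1$; (ii) a limit displacement $v_\infty\in LD(B_1\setminus K_\infty)$ obtained through a Korn--Poincar\'e inequality in cracked domains (available since $\Omega\setminus K_\infty$ has only the two obvious components, again thanks to connectedness of $\Gamma$). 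Passing the minimality of $(u_n,K_n)$ to the limit through admissible competitors built over the straight crack $K_\infty$ forces $v_\infty$ to solve the Lam\'e system in each half-disk with homogeneous Neumann data on the diameter. Classical linear-elasticity decay estimates (Campanato/Morrey on the half-ball) then yield excess decay for $v_\infty$ which contradicts the starting assumption.

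The main obstacle is the construction of admissible competitors for the pre-limit minimizers, which is the real heart of the matter. Starting from the flat limit $(v_\infty,K_\infty)$ one must, for each large $n$, cut off $v_\infty$ near $K_n$ and re-glue it to $u_n$ across the \emph{curved} set $K_n$, then control the length added by the modified crack. This requires fine Korn-type extension estimates in thin tubular neighbourhoods of almost-straight rectifiable curves, together with uniform two-sided traces on $K_n$, both in the spirit of the machinery developed for the existence theory \cite{CC,CCI,CFI,FS}. Ensuring that the jump set of the modified competitor does not gain measure outside a small neighbourhood of $K_n$ is the most delicate point and ultimately dictates both the size of $\varepsilon_0$ and the admissible $\tau$.
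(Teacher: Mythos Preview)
Your overall architecture---$\varepsilon$-regularity via excess decay, initialization at $\mathcal{H}^1$-a.e.\ point, and iteration to $\mathcal{C}^{1,\alpha}$---matches the paper's. However, there are two substantive gaps in the ``decisive step'' that the paper's proof is specifically designed to overcome, and which your proposal does not address.

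\textbf{First gap: flatness decay cannot come from the blow-up.} You bundle flatness and normalized energy into a single excess and hope the compactness/contradiction argument handles both. It does not. In the blow-up the limit crack $K_\infty$ is a diameter, so its flatness is identically zero; this tells you nothing about $\beta_{K_n}(0,\tau)$ before the limit. In the scalar Mumford--Shah theory the tilt estimate (derived from the Euler--Lagrange equation) couples $\beta$ to the energy, but here, as the paper stresses, no usable Euler--Lagrange equation is available because inner variations produce terms involving $\nabla u$ that are not controlled by $e(u)$. The paper therefore decouples the two quantities: it proves a separate, non-compactness result (Proposition~\ref{main3}) that $\beta(x_0,r/50)\le C\,\omega_u(x_0,r)^{1/14}$ by building an explicit competitor. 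The mechanism is a refined extension lemma (Lemma~\ref{extLem}): one first localizes $K$ in two small boxes of width $\eta r$, replaces $K$ by a segment inside a rectangle $U$, and extends $u$ from $\partial U$ minus a wall set $\Sigma$ of length $\sim\eta r$, paying an energy penalty $\sim\eta^{-6}$. Optimizing $\eta\sim\omega^{1/7}$ yields the flatness bound. Your description (``cut off $v_\infty$ near $K_n$ and re-glue'') does not capture this construction, and without it the flatness half of your combined excess has no reason to decay.

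\textbf{Second gap: strong convergence of the energy requires the Airy function.} In your contradiction argument you need $e(u_n)\mathbf{1}_{B\setminus K_n}\to e(v)$ \emph{strongly} in $L^2_{\mathrm{loc}}$ to pass to the limit in the lower bound $\omega_{u_n}(0,a)>\tau_0$. You invoke a ``Korn--Poincar\'e inequality in cracked domains'', but such an inequality is precisely what fails for general $K_n$ (this is obstacle~(i) in the introduction). The paper circumvents this by associating to each $u_n$ a scalar biharmonic Airy function $w_n$ with $|D^2 w_n|=|\mathbf{A}e(u_n)|$ and $w_n\in H^2_{0,K_n}(B)$; the capacity arguments of Propositions~\ref{prop:harm-conj}--\ref{prop:airy} then give $w_n\to w$ strongly in $H^2_{\mathrm{loc}}$, hence strong convergence of $e(u_n)$. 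This construction uses the two-dimensionality and the isotropic form of $\mathbf{A}$ in an essential way, and it is where the ``isolated connected component'' hypothesis enters a second time (to normalize the harmonic conjugate on a single component). Without the Airy function, your compactness step does not close.
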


\subsection*{Comments about the main result}

The strategy of our approach is inspired by the regularity theory for minimizers of the classical Mumford-Shah functional. However, the presence of the symmetric gradient in the bulk energy term prevents the standard theory from being applied directly. We will explain later the main differences with the classical theory, and how we overcome some of the difficulties in this paper. Before that, let us first list few remarks about the main result.
 
Firstly, it would be desirable to obtain the analogue of Bonnet's result \cite{b} for the Griffith energy, i.e. to prove that each isolated connected components of $K$ is a finite union of curves and to classify the blow-up limits of minimizers. However, this seems difficult since the proof of \cite{b} relies on the monotonicity formula for the Dirichlet energy, which is not known in the case of the elastic energy.
 
Secondly, we emphasize that our proof strongly uses the two-dimensional setting and that it cannot be easily generalized in higher dimensions.  We will describe below the main ideas of the proof, highlighting where the 2D assumption is crucial.

Thirdly, the $\mathcal C^{1,\alpha}$ regularity can be used as a first step in order to get higher regularity of both $u$ and the crack $K$. Indeed, once we know that $K$ is locally the graph of a $\mathcal C^{1,\alpha}$ function, one can write the Euler equation (which is a priori not well justified without any regularity of $K$, even in a week sense), and then apply the result in \cite{KLM}. Assuming that $u$ is moreover bounded, we obtain that $K$ is analytic (see \cite[Corollary 4.11]{KLM} together with the comment just after \cite[Remark 4.12]{KLM}). 
  
Fourthly,  since any connected component of $K$ is automatically uniformly rectifiable (because it is compact, connected, and Ahlfors regular \cite[Theorem 31.5]{d}), it is tempting to think that the exceptional negligible set $Z$ of Theorem \ref{mainTH} could be taken such that ${\rm dim}_{\mathcal H}(Z)<1$. For the classical Mumford-Shah problem this is true and it can be proved using  the uniform rectifiability of $K$. Indeed, this property permits to apply the so-called $\varepsilon$-regularity Theorem in many balls, and not only almost everywhere, as it comes using Carleson measure estimates (see for instance \cite{Ri}). For the Griffith energy, we establish an analogous $\varepsilon$-regularity theorem on any isolated connected piece, which requires a separating property on the initialized ball. Up to our knowledge, such as separating property is not quantitatively controlled by the uniform rectifiability, as it is the case for the flatness and the normalized energy (see \cite[Section 3.2.3]{L3}).

Yet, let us stress that it is not known in general how to control the  connected components of the singular set of a minimizer. Even in the scalar case, this question  is a big issue related to the Mumford-Shah conjecture.  Of course the number of connected components with positive $\mathcal{H}^1$-measure has to be at most countable, but   it seems difficult to exclude the possibility of uncountably many negligible connected components that accumulate to form a set with positive $\mathcal{H}^1$-measure. We could also imagine many small connected components of positive measure that accumulate near a given bigger component. The assumption to consider an isolated connected component in our main theorem rules out these pathological situations. The precise role of this hypothesis will be explained later.

Finally, our main result is stated on an isolated connected component of a general minimizer.  An alternative could be to minimize the Griffith energy under a connectedness constraint, or under a uniform bound on the number of the connected components. Existence and Ahlfors regularity of a minimizer in this class are much easier to obtain, due to Blashke and Go\l\c{a}b Theorems, and our result in this case would imply that the singular set is $\mathcal{C}^{1,\alpha}$ regular $\mathcal{H}^1$-almost  everywhere. Indeed, a careful inspection to our proof reveals that all the competitors that we use preserve the topology of $K$, thus they can still be used under connectedness constraints on the singular set, leading to the same estimates. Moreover, it is quite probable that most of  the results  contained in this paper could be applied to almost minimizers instead of  minimizers (i.e. pairs that minimize the Griffith energy  in all balls of radius $r$ with its own boundary datum, up to an error excess  controlled by some $Cr^{1+\alpha}$ term). For sake of simplicity we decided to treat  in this paper minimizers of the global functional only.

\subsection*{Comments about the proof}
The Griffith energy is  similar to the classical Mumford-Shah energy for some aspects, but it actually necessitates the introduction of new ideas and new techniques. For the classical Mumford-Shah problem, there are two main approaches. The first one, in dimension 2, see \cite{b} (or \cite{dMum}, written a bit differently in the monograph \cite{d}), is of pure variational nature. It was extended in higher dimensions in \cite{l2} with a more complicated geometrical stopping time argument. Alternatively, there is a PDE approach \cite{afp1, afp2} (see also \cite{afp}), valid in any dimension, which consists in working on the Euler-Lagrange equation. However, none of   the aforementioned approaches can be directly applied to the Griffith energy. 

More precisely, while trying to perform the regularity theory for the Griffith energy, one has to face the following main obstacles:

\emph{(i) No Korn inequality.} The well-known Korn inequality in elasticity theory enables one to control the full gradient $\nabla u$ by the symmetric part of the gradient $e(u)$. Unfortunately, it is not valid in the cracked domain $\Omega \setminus K$, due to the possible lack of regularity of $K$ (see \cite{CFI3,F}). Therefore, one has to keep working with the symmetric gradient in all the estimates.

\emph{(ii) No Euler-Lagrange equation.}  A consequence of the failure of the Korn inequality is the lack of the Euler-Lagrange equation. Indeed, while computing the derivative of the Griffith energy with respect to inner variations, i.e.\ by a perturbation of $u$ of the type $u\circ \Phi_t(x)$ where $\Phi_t={\rm id}+t \Phi$, some mixtures of derivatives of $u$ appear  and these are not controlled by the symmetric gradient $e(u)$. Therefore, the so-called ``tilt-estimate'',  which is one of the key ingredients of the method in \cite{afp1,afp2} cannot be used.

\emph{(iii) No coarea formula.} A fondamental tool in calculus of variations and in geometric measure theory is the so-called coarea formula, which enables one to reconstruct the total variation of a scalar function by integrating the perimeter of its level sets. In our setting, on the one hand the displacement $u$ is a vector field, and on the other hand even for each coordinate of $u$ there would be no analogue of this formula with $e(u)$ replacing $\nabla u$.  In the approach of \cite{d} or \cite{l2}, the coarea formula is a crucial ingredient which ensures that, provided the energy of $u$ is very small in some ball, one can use  a suitable level set of $u$ to ``fill the holes" of $K$, with very small length. It permits to reduce to the case where the crack $K$ ``separates'' the ball in two connected components. This is essentially the reason why our regularity result only holds on (isolated) connected components of $K$.

\emph{(iv) No monotonicity  formula for the elastic energy.}  One of the main ingredients to control the energy in \cite{b} and \cite{d} (in dimension 2), is the so-called monotonicity formula, which essentially says that a suitable renormalization of the bulk energy localized in a ball of radius $r$ is a nondecreasing function of $r$. This is not known for the elastic energy, i.e.\ while replacing $\nabla u$ by $e(u)$. 

\emph{(v) No good extension techniques.} To prove any kind of regularity result, one has to create convenient competitors, and the main competitor in dimension 2 is obtained by replacing $K$ in some ball $B$ where it is sufficiently flat, by a segment $S$ which is nearly a diameter. While doing so, and in order to use the minimality of $(u,K)$, one has to define a new function $v$ which coincides with $u$ outside $B$, which belongs to $LD(B\setminus S)$, and whose elastic energy is controlled by that of $u$. Denoting by $C^\pm$ both connected components of $\partial B \setminus S$, the way this is achieved in the standard Mumford-Shah theory (see \cite{d} or \cite{MorelSolimini}) consists in introducing the harmonic extensions of $u|_{C^\pm}$ to $B$ using the Poisson kernel. This provides two new functions $u^\pm \in H^1(B)$, whose Dirichlet energies in the ball $B$ are controlled by that of $u$ on the boundary $\partial B \setminus K$. For the Griffith energy, the same argument cannot be used since there is no natural ``boundary'' elastic energy on $\partial B \setminus K$.

\medskip

Let us now explain the novelty of the paper and how we  obtain a regularity result, in spite of the aforementioned problems. We do not have any hope to solve directly the general problem (i), which would probably be a way to solve all the other ones.  We follow mainly the two-dimensional approach of \cite{d}, for which one has to face the main obstacles (iii)--(v) described above.

Due to the absence of the coarea formula, we cannot control the size of the holes in $K$ at small scales, when $K$ is very flat, as done in \cite{d}. This is a first reason why our theorem restricts to a connected component of $K$ only. There is a second reason related to the decay of the normalized energy by use of a compactness argument (in the spirit of \cite{l3} or \cite{afp1})
in absence of a monotonicity formula for the energy. In this argument, one of our  main tool is the so-called Airy function $w$ associated to a minimizer $u$, which can be constructed only in the two-dimensional case. This function has been already used in \cite{BCL} to prove compactness and  $\Gamma$-convergence results related to the elastic energy, and it is defined through the harmonic conjugate (see Proposition~\ref {prop:airy}). The main property of $w$ is that it is a scalar biharmonic function in $\Omega \setminus K$, which satisfies $|D^2w|=|\mathbf Ae(u)|$. What is important is the fact that $D^2w$ is a full gradient, while $e(u)$ is only a symmetric gradient.  The other interesting fact in terms of boundary conditions, at least under connected assumptions, is the transformation of a Neumann type problem on the displacement $u$ into a Dirichlet problem on the Airy function $w$, which is usually easier to handle.

 We then obtain that, provided $K$ is sufficiently flat in some ball $B(x_0,r)$ and the normalized energy 
 $$\omega(x_0,r):=\frac{1}{r}\int_{B(x_0,r)}\mathbf Ae(u):e(u) \,dx$$ is sufficiently small, we can control the decay of the energy $r \mapsto \omega(x_0,r)$ as $r \to 0$ (see Proposition~\ref {JFprop}). This first decay estimate is proved by contradiction, using a compactness and $\Gamma$-convergence argument on the elastic energy. In this argument, it is crucial the starting point $x_0$ to belong to an isolated connected component of $K$.

The second part of the proof is a decay estimate on the flatness, namely the quantity
$$\beta(x_0,r):=\frac{1}{r}\inf_{L}  \max\left\{ \sup_{x\in K\cap \overline{B}(x_0,r)}{\rm dist }(x,L), \sup_{x\in L\cap \overline{B}(x_0,r)}{\rm dist }(x,K)\right\},$$
where the infimum is taken over all affine lines $L$ passing through $x$,  measuring how far is $K$ from a reference line in $B(x_0,r)$.  This quantity is particularly useful since a decay estimate of the type $\beta(x_0,r)\leq Cr^{\alpha}$ leads to a $\mathcal C^{1,\alpha}$ regularity result on $K$ (see Lemma~\ref {c1estimates}). 

The excess of density, namely
$$ \frac{\mathcal{H}^1(K\cap B(x_0,r))-1}{2r},$$
controls the quantity $\beta(x_0,r)^2$, as a consequence of the Pythagoras inequality (see Lemma ~\ref {pythag}). In order to estimate the excess of density, the standard technique consists in comparing $K$, where $K$ is already known to be very flat in a ball $B(x_0,r)$ (i.e. $\beta(x_0,r)\leq \varepsilon$), with the competitor given by the replacement  of $K$ by a segment $S$ in $B(x_0,r)$. While doing this, one has to define a suitable admissible function $v$ in $B(x_0,r)$ associated to the competitor $S$, that coincides with $u$ outside $B(x_0,r)$ and has an elastic energy controlled by that of $u$. This is where we have to face the problem (v) mentioned earlier. The way we overcome this difficulty is a technical  extension result (see Lemma~\ref {extLem}). Whenever $\beta(x_0,r)+\omega(x_0,r)\leq \varepsilon$ for $\varepsilon$ sufficiently small (depending only on the Ahlfors regularity constant $\theta_0$), one can find a rectangle $U$, such that  
$$\overline B(x_0,r/5)\subset U \subset B(x_0,r),$$
and  a ``wall set'' $\Sigma \subset \partial U$, such that:
$$K \cap \partial U \subset \Sigma \quad \quad \text{ and } \quad \quad \mathcal{H}^1(\Sigma)\leq \eta  r,$$
where $\eta$ is small.
Moreover, if $K'$ is a competitor for $K$ in $U$  (which ``separates''), then there exists a function $v \in LD(U \setminus K')$ such that 
$$u=v \text{ on } \partial U \setminus \Sigma,$$
and
$$\int_{U \setminus K'}  {\mathbf A }e(v):e(v)\, dx \leq \frac{C}{\eta^6} \int_{B(x_0,r)\setminus K}  {\mathbf A }e(u):e(u) \, dx.$$
The main point being that the set $\Sigma \subset U$ where the values of $u$ and $v$ do not match, has very small length, essentially of order $\eta>0$, that can be taken arbitrarily small. The price to pay is a diverging factor as $\eta \to 0$ in the right-hand side of the previous inequality. A similar statement with  $\mathcal{H}^1(\Sigma)\leq r\beta(x_0,r)$ is much easier to prove, and is actually used before as a preliminary construction (see Lemma~\ref {extension}).  We believe Lemma~\ref {extLem} to be one of the most original part of the proof of Theorem~\ref {mainTH}.

With this extension result at hand,  estimating the flatness through the excess of density as described before, and choosing $\eta$ of order $\omega(x_0,r)^{1/6}$, enables one to obtain a decay estimate for the flatness of the type (see Proposition~\ref {main3}),
$$\beta\left(x_0,\frac{r}{50}\right)\leq C\omega(x_0,r)^{1/14}.$$
The previous decay estimate together with the decay of the renormalized energy constitute the main ingredients which lead to the $\mathcal C^{1,\alpha}$ regularity result. 

\subsection*{Organization of the paper} 
The paper is organized as follows. In Section~\ref {section_statement}, we introduce the main notation used throughout the paper, and we precisely define the variational problem of fracture mechanics we are interested in. In Section~\ref {sec3}, we prove our main result, Theorem~\ref {mainTH}, concerning the partial $\mathcal C^{1,\alpha}$-regularity of the isolated connected components of the crack. The proof relies on two fundamental results. The first one, Proposition~\ref {main3} is a flatness estimate in terms of the renormalized bulk energy which is established in Section~\ref {sec5}. The second one, Proposition~\ref {JFprop}, is a bulk energy decay which is proved in Section~\ref {sec6}. Eventually, we gather in the Appendix of Section~\ref {app} several technical results.

\section{Statement of the problem}
\label{section_statement}

\subsection{Notation}

The Lebesgue measure in $\R^n$ is denoted by $\mathcal L^n$, and the $k$-dimensional Hausdorff measure by $\mathcal H^k$. If $E$ is a measurable set, we will sometimes write $|E|$ instead of $\mathcal L^n(E)$. If $a$ and $b \in \R^n$, we write $a \cdot b=\sum_{i=1}^n a_i b_i$ for the Euclidean scalar product, and we denote the norm by $|a|=\sqrt{a \cdot a}$. The open (resp. closed) ball of center $x$ and radius $r$ is denoted by $B(x,r)$ (resp. $\overline B(x,r)$).

\medskip

We write $\mathbb M^{n \times n}$ for the set of real $n \times n$ matrices, and $\mathbb M^{n \times n}_{\rm sym}$ for that of all real symmetric $n \times n$ matrices. Given a matrix $A \in \mathbb M^{n \times n}$, we let $|A|:=\sqrt{{\rm tr}(A A^T)}$ ($A^T$ is the transpose of $A$, and ${\rm tr }A$ is its trace) which defines the usual Frobenius norm over $\mathbb M^{n \times n}$. 

\medskip

Given an open subset $U$ of $\R^n$, we denote by $\mathcal M(U)$ the space of all real valued Radon measures with finite total variation. We use standard notation for Lebesgue spaces $L^p(U)$ and Sobolev spaces $W^{k,p}(U)$ or $H^k(U):=W^{k,2}(U)$. If $K$ is a closed subset of $\R^n$, we denote by $H^k_{0,K}(U)$ the closure of $\C^\infty_c(\overline U \setminus K)$ in $H^k(U)$. In particular, if $K=\partial U$, then $H^k_{0,\partial U}(U)=H^k_0(U)$. 

\medskip

\noindent {\bf Functions of Lebesgue deformation.} Given a vector field (distribution) $u : U \to \R^n$, the symmetrized gradient of $u$ is denoted by 
$$e(u):=\frac{\nabla u + \nabla u^T}{2}.$$
In linearized elasticity, $u$ stands for the displacement, while $e(u)$ is the elastic strain. The elastic energy of a body is given by a quadratic form of $e(u)$, so that it is natural to consider displacements such that $e(u) \in L^2(U;\mathbb M^{n \times n}_{\rm sym})$. If $U$ has Lipschitz boundary, it is well known that $u$ actually belongs to $H^1(U;\R^n)$ as a consequence of the Korn  inequality. However, when $U$ is not smooth,  we can only assert that  $u \in L^2_{\rm loc}(U;\R^n)$. This motivates the following definition of the space of Lebesgue deformation:
\begin{equation}\label{eq:LD}
LD(U):=\{ u \in L^2_{\rm loc}(U;\R^n) : \; e(u) \in L^2(U;\mathbb M^{n \times n}_{\rm sym})\}.
\end{equation}
If $U$ is connected and $u$ is a distribution with $e(u)=0$, then necessarily it is a rigid movement, {\it i.e.}  $u(x)=Ax+b$ for all $x \in U$, for some skew-symmetric matrix $A \in \mathbb M^{n \times n}$ and some vector $b \in \R^n$. If, in addition, $U$ has Lipschitz boundary, the following Poincar\'e-Korn inequality holds: there exists a constant $c_U>0$ and a rigid movement $r_U$ such that
\begin{equation}\label{poincare-korn}
\|u-r_U\|_{L^2(U)}\leq c_U \|e(u)\|_{L^2(U)} \quad \text{for all }u \in LD(U).
\end{equation}
According to \cite[Theorem 5.2, Example 5.3]{AMR}, it is possible to make $r_U$ more explicit in the following way: consider a measurable subset $E$ of $U$ with $|E|>0$, then one can take 
$$r_U(x):=\frac{1}{|E|}\int_E u(y)\, dy + \left( \frac{1}{|E|}\int_E\frac{\nabla u(y) - \nabla u(y)^T}{2}\, dy \right)\left(x-\frac{1}{|E|}\int_E y\, dy \right),$$
provided the constant $c_U$ in \eqref{poincare-korn} also depends on $E$.

\medskip 

\noindent{\bf Hausdorff convergence of compact sets.} Let $K_1$ and $K_2$ be compact subsets of a common compact set $K \subset \R^n$. The Hausdorff distance between $K_1$ and $K_2$ is given by
$$
d_{\mathcal H}(K_1,K_2):=\max\left\{ \sup_{x \in K_1}{\rm dist}(x,K_2), \sup_{y \in K_2}{\rm dist}(y,K_1)\right\}.
$$
We say that a sequence $(K_n)$ of compact subsets of $K$ converges in the Hausdorff distance to the compact set $K_\infty$ if 
$d_{\mathcal H}(K_n,K_\infty) \to 0$. 
Finally let us recall Blaschke's selection principle which asserts that 
from any sequence $(K_n)_{n \in \mathbb N}$ of compact subsets of $K$, one can extract a subsequence converging in the Hausdorff distance.

\medskip

\noindent {\bf Capacities.} In the sequel, we will use the notion of capacity for which we refer to \cite{AH,HP}. We just recall the definition and several facts. The $(k,2)$-capacity of a compact set $K \subset \R^n$ is defined by
$$\Ccap_{k,2}(K):=\inf \left\{ \|\varphi\|_{H^k(\R^n)} : \varphi \in \C^\infty_c(\R^n),\; \varphi \geq 1 \text{ on } K\right\}.$$
This definition is then extended to open sets $A \subset \R^2$ by
$$\Ccap_{k,2}(A):= \sup\big\{ \Ccap_{k,2}(K) : K \subset A, \; K \text{ compact}\big\},$$
and to arbitrary sets $E \subset \R^n$ by
$$\Ccap_{k,2}(E):= \inf\big\{ \Ccap_{k,2}(A) : E \subset A, \; A \text{ open}\big\}.$$
One of the interests of capacity is that it enables one to give an accurate sense to the pointwise value of Sobolev functions. More precisely, every $u \in H^k(\R^n)$ has a $(k,2)$-quasicontinuous representative $\tilde u$, which means that $\tilde u=u$ a.e.\ and that, for each $\varepsilon>0$, there exists a closed set $A_\varepsilon \subset \R^n$ such that $\Ccap_{k,2}(\R^n \setminus A_\varepsilon)<\varepsilon$ and $\tilde u|_{A_\varepsilon}$ is continuous on $A_\varepsilon$ (see \cite[Section 6.1]{AH}). The $(k,2)$-quasicontinuous representative is unique, in the sense that two $(k,2)$-quasicontinuous representatives of the same function $u \in H^k(\R^n)$ coincide $\Ccap_{k,2}$-quasieverywhere. In addition, if $U$ is an open subset of $\R^n$, then $u{{}}\in H^k_0(U)$ if and only if for all multi-index $\alpha \in \mathbb N^n$ with length $|\alpha|\leq k-1$, $\partial^\alpha u$ has a $(k-|\alpha|,2)$-quasicontinuous representative that vanishes ${\rm Cap}_{k-|\alpha|,2}$-quasi everywhere on $\partial U$, {\it i.e.} outside a set of zero ${\rm Cap}_{k-|\alpha|,2}$-capacity (see \cite[Theorem 9.1.3]{AH}). In the sequel, we will only be interested in the cases $k=1$ or $k=2$ in dimension $n=2$.

\subsection{Definition of the problem}

We now describe the underlying fracture mechanics model and the related variational problem.

\medskip

\noindent {\bf Reference configuration.}  Let us consider a homogeneous isotropic linearly elastic body occupying $\Omega \subset \R^2$ in its reference configuration. The Hooke  law associated to this material is given by 
$$\mathbf A \xi=\lambda ({\rm tr}\xi) I+2 \mu \xi \quad \text{ for all }\xi \in \mathbb M^{2 \times 2}_{\rm sym},$$
where $\lambda$ and $\mu$ are the Lam\'e coefficients satisfying $\mu>0$ and $\lambda+\mu>0$. 
Note that this expression can be inverted into 
$$\mathbf A^{-1} \sigma=\frac{1}{2\mu}\sigma - \frac{\lambda}{4\mu(\lambda+\mu)}({\rm tr}\sigma) I=\frac{1+\nu}{E}\sigma-\frac{\nu}{E}({\rm tr} \sigma)I \quad \text{ for all }\sigma \in \mathbb M^{2 \times 2}_{\rm sym},$$
where $E:=4\mu(\lambda+\mu)/(\lambda+2\mu)$ is the Young modulus and $\nu:=\lambda/(\lambda+2\mu)$ is the Poisson ratio.

\medskip

\noindent {\bf Admissible displacements/cracks pairs.}  Let $\Omega' \subset \R^2$ be a bounded open set such that ${\rm diam}(\Omega') \leq 2{\rm diam}(\Omega)$ and $\overline \Omega \subset \Omega'$. We say that a pair set/function is admissible, and we write $(u,K) \in \mathcal A(\Omega)$, if $K\subset \overline \Omega$ {is  closed}, and $u \in LD(\Omega' \setminus K)$.

\medskip

\noindent {\bf Griffith energy.} For all $(u,K) \in \mathcal{A}(\Omega)$, we define the Griffith energy functional by
$$\mathcal{G}(u,K):=\int_{\Omega \setminus K}\mathbf A e(u):e(u)\, dx +   \mathcal{H}^{1}(K).$$
In this work, we are interested in (interior) regularity properties of the global minimizers of the Griffith energy under a Dirichlet boundary condition, i.e., solutions to the (strong) minimization problem
\begin{equation}\label{eq:strong-min}
\inf\left\{\mathcal G(v,K'): \quad (v,K') \in   \mathcal A(\Omega), \, v=\psi \text{ a.e. in }\Omega' \setminus\overline \Omega\right\},
\end{equation}
where  $\psi \in W^{1,\infty}(\R^2;\R^2)$ is a prescribed boundary displacement. Note that, this formulation of the Dirichlet boundary condition permits to account for possible cracks on $\partial \Omega$, where the displacement does not match the prescribed displacement $\psi$. 

The question of the existence of solutions to \eqref{eq:strong-min} has been addressed in \cite{CC2} (see also \cite{CC,FS}), extending up to the boundary the  regularity results \cite{CFI,CCI}. For this, by analogy with the classical Mumford-Shah problem, it is convenient to introduce a weak formulation of \eqref{eq:strong-min} as follows
$$\inf\left\{\int_\Omega \mathbf A e(v):e(v)\, dx + \mathcal H^1(J_v) : \; v \in GSBD^2(\Omega'), \, v=\psi \text{ a.e. in }\Omega' \setminus\overline \Omega\right\},$$
with $GSBD^2$ a suitable subspace of that of generalized special functions of bounded deformation (see \cite{DM}) where the previous energy functional is well defined. According to \cite[Theorem 4.1]{CC}, if $\Omega$ has Lipschitz boundary, the previous minimization problem admits at least a solution, denoted by $u$. In addition,  if $\Omega$ is of class $\mathcal C^1$, thanks to \cite[Theorems 5.6 and 5.7]{CC2}, there exist $\theta_0>0$ and $R_0>0$, only depending on $\mathbf A$, such that the following property holds: for all $x_0 \in \overline{J_u}$ and all $r \in (0,R_0)$ such that $B(x_0,r) \subset \Omega'$, then
$$\mathcal H^1(J_u \cap B(x_0,r)) \geq \theta_0 r.$$
The previous property of $J_u$ ensures that, setting $K:=\overline{J_u}$, then $\mathcal H^1(K \setminus J_u \cap { \overline \Omega})=0$, so that the pair $(u,K) \in \mathcal A(\Omega)$ is a solution of the strong problem \eqref{eq:strong-min}. In addition, the crack set $K$ is $\mathcal H^1$-rectifiable and Ahlfors regular: for all $x_0 \in K$ and all $r \in (0,R_0)$ such that $B(x_0,r) \subset { \Omega'}$, then
\begin{equation}\label{ahlfors}
\theta_0 r\leq \mathcal H^1(K \cap B(x_0,r)) \leq C r,
\end{equation}
where $C$ is a constant depending only on $\Omega$. The second inequality is obtained by comparing $(u,K)$ with the most standard competitor $(v,K')$ where $v:=u{\bf 1}_{\Omega' \setminus (\Omega\cap B(x_0,r))}$ and $K':=[K\setminus (\Omega\cap B(x_0,r))] \cup \partial (\Omega \cap B(x_0,r))$.

Next, taking in particular $K'=K$ and any $v\in LD(\Omega\setminus K)$ as competitor implies that $u \in LD(\Omega\setminus K)$ is also a  solution of the minimization problem
$$\min\left\{ \int_{\Omega \setminus K} \mathbf A e(v):e(v)\, dx : v \in LD(\Omega\setminus K), \; v=\psi \text{ on }\partial\Omega \setminus K  \right\}.$$
Note that $u$ is unique up to an additive rigid movement in each connected component of $\Omega \setminus K$ disjoint from $\partial \Omega \setminus K$.  It turns out that $u$ satisfies the following variational formulation: for all test functions $\varphi \in H^1(\Omega\setminus K;\R^2)$ with $ \varphi=0$ on $\partial \Omega \setminus K$,
\begin{equation}\label{eq:var-form}
\int_{\Omega \setminus K} \mathbf A e(u) : e(\varphi)\, dx = 0.
\end{equation}
In particular, $u$ is a solution to the elliptic system
$$-{\rm div} (\mathbf A e(u))=0 \quad \text{ in }\mathcal D'(\Omega\setminus K;\R^2),$$
and, as a consequence, elliptic regularity shows that $u \in \mathcal C^\infty(\Omega \setminus K;\R^2)$.

%\begin{remark}\label{rm:locmin} 
%{\rm It will be convenient to localize the Griffith functional on ({\com Borel} measurable) subsets $A \subset \Omega$ by setting for all $(v,K') \in \mathcal A(\Omega)$,
%$$\mathcal G(v,K';A)=\int_{A \setminus K'}{{}}\mathbf A e(v):e(v)\, dx + \mathcal H^1(K \cap A).$$
%Then, if $(u,K)$ is a global minimizer of the Griffith functional, it turns out that, for all open balls $B$ with $\overline B \subset \Omega$,
%$$\mathcal G(u,K;\overline B) \leq \mathcal G(v,K';\overline B),$$
%for all competitors $(v,K') \in \mathcal A(\Omega)$ with $v=u$ a.e. in $\Omega \setminus (K \cup \overline B)$ and $K \setminus \overline B=K' \setminus \overline B$.}
%\end{remark}

%%%%%%%%%%%%%%%%%%%%%%%%%%%%%%%%%%%%%%%%

\section{The main quantities and proof of the $\mathcal C^{1,\alpha}$ regularity}\label{sec3}

%%%%%%%%%%%%%%%%%%%%%%%%%%%%%%%%%%%%%%%

We now introduce the main quantities that will be at the heart of our analysis.

\subsection{The normalized energy}

Let $(u,K) \in \mathcal{A}(\Omega)$. Then for any $x_0$ and $r>0$ such that $\overline B(x_0,r)\subset \Omega$ we define the {\it normalized elastic energy} by 
$$\omega(x_0,r):=\frac{1}{r}\int_{B(x_0,r) \setminus K}\mathbf A e(u):e(u) \; dx.$$
Sometimes we will write $\omega_u(x_0,r)$ to emphasize the underlying displacement $u$.

\begin{remark}
{\rm By definition of the normalized energy, for all $0<t<r$, we have
\begin{equation}\label{brutest1}\omega(x_0,t)\leq \frac{r}{t}\omega(x_0,r),
\end{equation}
 If $K'=\frac{1}{r}(K-x_0)$ and $v=\frac{1}{\sqrt{r}}u(r(\cdot +x_0))$ then
$$\omega_u(x_0,r)=\omega_{v}(0,1).$$
}
\end{remark}

\subsection{The flatness} 
Let $K$ be a closed subset of $\R^2$. For any $x_0\in \R^2$ and $r>0$, we define the {\it flatness} by
$$\beta(x_0,r):=\frac{1}{r}\inf_{L}  \max\left\{ \sup_{y\in K\cap \overline{B}(x_0,r)}{\rm dist }(y,L), \sup_{y\in L\cap \overline{B}(x_0,r)}{\rm dist }(y,K)\right\} ,$$
where the infimum is taken over all affine lines $L$ passing through $x_0$. In other words
$$\beta(x_0,r)=\frac{1}{r}\inf_{L} d_{\mathcal H}(K\cap \overline{B}(x_0,r), L\cap \overline{B}(x_0,r)).$$
 Sometimes we will write $\beta_K(x_0,r)$ to emphasize the underlying crack $K$.

\begin{remark}
{\rm By definition of the flatness, we always have that for all $0<t<r$,
\begin{equation}\label{brutest2}
\beta_K(x_0,t)\leq \frac{r}{t}\beta_K(x_0,r),
\end{equation}
and if $K'=\frac{1}{r}(K-x_0)$, then
$$\beta_K(x_0,r)=\beta_{K'}(0,1).$$
}
\end{remark}

In the sequel, we will consider the situation where
\begin{equation}\label{eq:omegaepsilon}
\beta_K(x_0,r)\leq \varepsilon,
\end{equation}
for $\varepsilon>0$ small.  This implies in particular that $K\cap \overline B(x_0,r)$ is contained in a narrow  strip  of thickness $\varepsilon r$ passing through the center of the ball. 

Let $L(x_0,r)$ be a line containing $x_0$ and satisfying 
\begin{equation}\label{optimal}
d_{\mathcal H}(L(x_0,r)\cap \overline B(x_0,r),K\cap \overline B(x_0,r))\leq r \beta_K(x_0,r).
\end{equation}
We will often use a local basis (depending on $x_0$ and $r$) denoted by $(e_1,e_2)$, where $e_1$ is a tangent vector to the line $L(x_0,r)$, while $e_2$ is an orthogonal vector to $L(x_0,r)$. The coordinates of a point $y$ in that basis will be denoted by $(y_1,y_2)$.

Provided \eqref{eq:omegaepsilon} is satisfied with $\varepsilon \in (0,1/2)$, we can define two discs  $D^+(x_0,r)$ and $D^{-}(x_0,r)$ of radius $r/4$ and such that $D^{\pm}(x_0,r)\subset B(x_0,r)\setminus K$. Indeed, using the notation introduced above, setting $x_0^{\pm}:= x_0\pm\frac{3}{4}r e_2$, we can check that $D^\pm(x_0,r):=B(x_0^\pm,r/4)$ satisfy the above requirements.
\medskip

A property that will be fundamental in our analysis is the separation in a closed ball.

\begin{definition}
Let $K$ be a closed subset of $\R^2$,  $x_0\in \R^2$ and $r>0$ be such that $\beta_K(x_0,r) \leq 1/2$. 
We say that $K$ \emph{separates} $D^\pm(x_0,r)$ in $\overline{B}(x_0,r)$ if the balls $D^\pm(x_0,r)$ are contained into two different connected components of $\overline{B}(x_0,r)\setminus K$.
\end{definition}

The following lemma guarantees that when passing from a ball $B(x_0,r)$ to a smaller one $B(x_0,t)$, {and provided that $\beta_K(x,r)$ is relatively small,} the property of separating is preserved for $t$ varying in a range depending on {$\beta_K(x,r)$}. 
\begin{lemma} \label{topological1}
Let $\tau\in (0,1/16)$, let $K \subset \Omega$ be a relatively closed set, and let $x_0 \in K$, $r>0$ be such that $\overline B(x_0,r) \subset \Omega$. Assume that $\beta_K(x_0,r)  \leq \tau$ and that $K$ separates $D^\pm(x_0,r)$ in $\overline{B}(x_0,r)$. Then for all $t \in (16\tau r, r)$, we have $\beta_K(x_0,t)\leq \frac{1}{2}$ and $K$ still separates $D^\pm(x_0,t)$ in $\overline{B}(x_0,t)$.
\end{lemma}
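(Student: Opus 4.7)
The plan is to keep a single reference line throughout the argument. Let $L$ be a line through $x_0$ with $d_{\mathcal H}(K \cap \overline B(x_0, r), L \cap \overline B(x_0, r)) \leq \tau r$, choose the orthonormal frame $(e_1,e_2)$ with $e_1$ along $L$, and set $S := \{y : |(y - x_0) \cdot e_2| \leq \tau r\}$. The observation to exploit is that $K \cap \overline B(x_0, r) \subset S$, so $\overline B(x_0, r) \setminus S$ is $K$-free and splits into a ``top cap'' and a ``bottom cap'' (the intersections of $\overline B(x_0, r)$ with $\{(y-x_0)\cdot e_2 > \tau r\}$ and $\{(y-x_0)\cdot e_2 < -\tau r\}$), each convex hence connected, and each containing the corresponding disk $D^\pm(x_0, r)$ because $r/2 > \tau r$.

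The first substantive step is to show that the separation hypothesis forces $K$ to meet every long enough vertical slice. Concretely, for $|z_1| \leq \sqrt{r^2 - \tau^2 r^2}$, if the segment $\{(z_1, y_2) : |y_2| \leq \sqrt{r^2 - z_1^2}\}$ missed $K$, its endpoints would satisfy $|y_2| \geq \tau r$ and so lie in the two caps; joining them through the caps to the centers of $D^\pm(x_0, r)$ would produce a path from $D^+(x_0, r)$ to $D^-(x_0, r)$ in $\overline B(x_0, r) \setminus K$, contradicting separation. Hence some $w \in K \cap \overline B(x_0, r)$ satisfies $w_1 = z_1$ and $|w_2| \leq \tau r$; if moreover $|z_1| \leq \sqrt{t^2 - \tau^2 r^2}$ (which is well defined since $t > 16 \tau r > \tau r$), the identity $|w|^2 = z_1^2 + w_2^2 \leq t^2$ even places $w$ in $\overline B(x_0, t)$.

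I would then bound $\beta_K(x_0, t)$ using the same line $L$. The inclusion $K \cap \overline B(x_0, t) \subset S$ gives $\sup_{y \in K \cap \overline B(x_0, t)} \mathrm{dist}(y, L \cap \overline B(x_0, t)) \leq \tau r$, since the orthogonal projection of $y$ onto $L$ stays in $L \cap \overline B(x_0, t)$ (because $L$ passes through $x_0$). In the reverse direction, every $z \in L \cap \overline B(x_0, \sqrt{t^2 - \tau^2 r^2})$ is within $\tau r$ of a point of $K \cap \overline B(x_0, t)$ by the slice covering, while any remaining $z \in L \cap \overline B(x_0, t)$ sits at distance at most $t - \sqrt{t^2 - \tau^2 r^2} \leq \tau^2 r^2 / t \leq \tau r / 16$ from this smaller disk, so a triangle inequality yields total distance at most $\tau r + \tau r/16 < 2 \tau r$. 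Dividing by $t > 16 \tau r$ gives $\beta_K(x_0, t) \leq 2\tau r / t < 1/8 < 1/2$.

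For the separation at scale $t$, I argue by contradiction. Given a path $\gamma \subset \overline B(x_0, t) \setminus K$ from $D^+(x_0, t)$ to $D^-(x_0, t)$, the segment $\{x_0 + s e_2 : s \in [3t/4, 3r/4]\}$ connects the centers of $D^+(x_0, t)$ and $D^+(x_0, r)$, lies in $\overline B(x_0, r)$, and stays in the top cap because $3t/4 > 12\tau r > \tau r$, hence avoids $K$; the analogous segment in the $-e_2$ direction handles $D^-$. Concatenating produces a path from $D^+(x_0, r)$ to $D^-(x_0, r)$ in $\overline B(x_0, r) \setminus K$, contradicting the separation at scale $r$. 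The one step that I expect to require genuine thought is the slice-avoidance argument, since it is where the separation hypothesis enters in a truly topological way; everything else reduces to elementary Euclidean geometry in the frame $(e_1, e_2)$.
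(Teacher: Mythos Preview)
Your strategy is sound and the flatness estimate is carried out carefully; in fact you prove the two–sided Hausdorff bound at scale $t$, while the paper simply invokes the scaling inequality $\beta(x_0,t)\le (r/t)\beta(x_0,r)$. The slice argument is a clean way to bring the separation hypothesis into play.

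There is, however, a genuine gap in the separation step. By the paper's conventions the disks $D^\pm(x_0,t)$ are built from an optimal approximating line $L(x_0,t)$ at scale $t$, whose direction may differ from that of $L=L(x_0,r)$. Their centers are $x_0\pm\tfrac{3t}{4}\nu(x_0,t)$ with $\nu(x_0,t)\perp L(x_0,t)$, not $x_0\pm\tfrac{3t}{4}e_2$. Hence your claim that the segment $\{x_0+se_2:\,s\in[3t/4,3r/4]\}$ joins the centers of $D^+(x_0,t)$ and $D^+(x_0,r)$ is unjustified, and without it you have not shown that $D^+(x_0,t)$ lies in the top cap $\{(y-x_0)\cdot e_2>\tau r\}$, which is precisely what your path–extension contradiction needs.

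The paper fills exactly this hole with a short angle estimate: every point of $L(x_0,t)\cap\overline B(x_0,t)$ is within $t\beta(x_0,t)\le\tau r$ of $K\cap\overline B(x_0,t)\subset\{|y_2|\le\tau r\}$, so $L(x_0,t)\cap B(x_0,t)\subset\{|y_2|\le 2\tau r\}$; this forces the angle $\alpha$ between $L(x_0,t)$ and $L$ to satisfy $\alpha\le\arcsin(2\tau r/t)\le 4\tau r/t\le 1/4$, and an elementary computation then gives $D^+(x_0,t)\subset\{y_2\ge t/4\ge 4\tau r\}$ (and symmetrically for $D^-$). Once that inclusion is in hand, your cap argument goes through verbatim, and indeed the paper concludes the same way, just without the detour through a hypothetical path $\gamma$.
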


\begin{proof} 
We will need the following elementary inequality resulting from the mean value Theorem
\begin{equation}
\arcsin(t) \leq \sup_{s\in [0,\frac{1}{2}]} \frac{1}{\sqrt{1-s^2}} t = \frac{2}{\sqrt{3}} t\leq 2t \quad \text{for all } t \in \left[0,\frac{1}{2}\right]. \label{arcsin}
\end{equation}
Using the notations introduced above, considering the local basis $\{e_1,e_2\}$ such that $e_1$ is a tangent vector to $L(x_0,r)$ and $e_2$ is a normal vector to $L(x_0,r)$, we have
\begin{equation}
K\cap \overline B(x_0,r)\subset \{ y \in \overline B(x_0,r) \; :\; |y_2|\leq \tau r\}. \label{Kstrip}
\end{equation}
For all $t \in (16\tau r , r)$, we have
\begin{equation}
\beta(x_0,t)\leq \frac{r}{t} \beta(x_0,r)\leq \frac{1}{16\tau} \beta(x_0,r)\leq  \frac{1}{16}\leq  \frac{1}{2}, \label{estimbetat}
\end{equation}
so that $D^\pm(x_0,t)$ are well defined. Denoting by $\nu(x_0,t)$ a normal vector to the line $L(x_0,t)$, we can assume that $\nu(x_0,t) \cdot e_2>0$.

We first note that, similarly to \eqref{estimbetat}, we can estimate
\begin{equation}
{\rm dist} (x,L(x_0,t)) \leq t \beta(x_0,t)  \leq r \beta(x_0,r) \leq   \tau r \quad \text{ for all } x\in K \cap \overline B(x_0,t).  \label{Kstrip2}
\end{equation}
From  \eqref{Kstrip2}  and \eqref{Kstrip} we deduce
$$  B(x_0,t) \cap  L(x_0,t)\subset \{ y \in \R^2  :\; |y_2|\leq 2\tau r \}.$$
Denoting by $\alpha=\arccos (\nu(x_0,t) \cdot e_2)$ the angle between $\nu(x_0,t)$ and $e_2$, the previous inclusion implies
\begin{equation}
\alpha \leq \arcsin\left(\frac{2\tau r}{t}\right)\leq \frac{4\tau r}{t}\leq \frac{1}{4},\label{estimalpha}
\end{equation}
where we have used \eqref{arcsin},  and that $t> 16\tau r$.

Let $y_0:=x_0+\frac{3}{4}\nu(x_0,t) t$ be the center of the disc $D^+(x_0,t)$. We have that $  |(y_0-x_0)_2|=\cos(\alpha)\frac{3}{4}t$. In particular, using the elementary inequality $|\cos(\alpha)-1|\leq \alpha$ and \eqref{estimalpha} we get 

$${\rm dist}(y_0,L(x_0,r))= |(y_0-x_0)_2|=\cos(\alpha)\frac{3}{4}t\geq \frac{3}{4}(1-\alpha)t\geq \frac{t}{2},$$
hence, since $t> 16\tau r$, we infer that for all $y\in B(y_0,\frac{t}{4})$,
$${\rm dist}(y,L(x_0,r))\geq \frac{t}{2}-\frac{t}{4}=\frac{t}{4}\geq 4\tau r.$$
All in all, we have proved that 
$$D^+(x_0,t)=B\left(y_0,\frac{t}{4}\right)\subset \{y \in \R^2:\; y_2\geq 4\tau r  \}.$$
Arguing similarly for $D^-(x_0,t)$, we get
$$D^-(x_0,t)\subset \{y \in \R^2:\; y_2\leq -4\tau r  \}.$$
Since by \eqref{Kstrip} we have that $K \cap \overline B(x_0,t) \subset \{y \in \overline B(x_0,t): \; |y_2|{{}}\leq \tau r\}$, we deduce that $D^\pm(x_0,t)$ must belong to two distinct connected components of $\overline B(x_0,{ t}) \setminus K$, and thus that $K$ actually separates $D^\pm(x_0,t)$  in $\overline{B}(x_0,t)$. 
\end{proof}

The following topological result is well-known.
\begin{lemma} \label{existGamma}
Let $K \subset \Omega$ be a relatively closed set, and let $x_0 \in K$, $r>0$ be such that $\overline B(x_0,r) \subset \Omega$. Assume that $\mathcal{H}^1(K\cap B(x_0,r))< +\infty$, $\beta_K(x_0,r)  \leq 1/2$ and that $K$ separates $D^\pm(x_0,r)$ in $\overline{B}(x_0,r)$. Then, there exists an injective Lipschitz curve $\Gamma\subset K$ that still separates $D^\pm(x_0,r)$ in $\overline{B}(x_0,r)$.
\end{lemma}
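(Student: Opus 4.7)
The proof combines three classical topological facts: (a) if a compact set $F$ separates two disjoint connected sets $A,B$ in a compact, locally connected space, then some connected component of $F$ already separates $A$ from $B$ (Kuratowski--Whyburn, see e.g. Newman's monograph); (b) any compact connected set $E\subset\R^2$ with $\mathcal H^1(E)<+\infty$ is arcwise connected, and any two of its points can be joined by an injective Lipschitz curve contained in $E$ (Eilenberg--Harrold/Falconer); and (c) the Jordan arc theorem in a disc: a simple arc in $\overline B(x_0,r)$ meeting $\partial B(x_0,r)$ exactly at its two distinct endpoints disconnects $\overline B(x_0,r)$ into exactly two open pieces, each bordering one of the two boundary sub-arcs.

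Applying (a) to the compact set $K\cap\overline B(x_0,r)$, which by hypothesis separates the connected sets $D^\pm(x_0,r)$ in the compact, locally connected space $\overline B(x_0,r)$, I would first extract a connected component $K_0\subset K\cap\overline B(x_0,r)$ that still separates them. Since
\[\mathcal H^1(K_0)\leq\mathcal H^1(K\cap B(x_0,r))+\mathcal H^1(\partial B(x_0,r))<+\infty,\]
the set $K_0$ meets the hypothesis of (b).

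The core step is to exhibit two points of $K_0$ on opposite ends of the ball. In the local frame $\{e_1,e_2\}$ adapted to $L(x_0,r)$, the bound $\beta_K(x_0,r)\leq 1/2$ gives $K_0\subset\{y:|(y-x_0)\cdot e_2|\leq r/2\}$, while $D^\pm(x_0,r)\subset\{y:\pm(y-x_0)\cdot e_2\geq r/2\}$. I claim there exist $p_\ell,p_r\in K_0\cap\partial B(x_0,r)$ with $(p_\ell-x_0)\cdot e_1<0<(p_r-x_0)\cdot e_1$. Indeed, were $K_0\cap\partial B(x_0,r)$ contained in the right closed half-circle $\{(y-x_0)\cdot e_1\geq 0\}$, then the left closed half-circle $C_\ell:=\partial B(x_0,r)\cap\{(y-x_0)\cdot e_1\leq 0\}$ would be disjoint from $K_0$: its only points with $(y-x_0)\cdot e_1\geq 0$ are the two endpoints $x_0\pm re_2$, which lie outside the strip and hence outside $K_0$. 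Concatenating a vertical segment from the center of $D^+(x_0,r)$ up to $x_0+re_2$, the arc $C_\ell$ down to $x_0-re_2$, and a vertical segment to the center of $D^-(x_0,r)$ would then produce a continuous path in $\overline B(x_0,r)\setminus K_0$ joining $D^+$ to $D^-$ (the two radial segments live in $\{|(y-x_0)\cdot e_2|\geq 3r/4\}$, outside the strip), contradicting the separation.

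Finally, using (b) I would join $p_\ell$ and $p_r$ by an injective Lipschitz arc in $K_0$; passing to a sub-arc if necessary, one may assume it meets $\partial B(x_0,r)$ only at its two endpoints $p_\ell$ and $p_r$. Call the resulting arc $\Gamma$. By (c), $\overline B(x_0,r)\setminus\Gamma$ has exactly two components, one bordering the upper boundary arc from $p_\ell$ to $p_r$ (through $x_0+re_2$), the other the lower one. The same vertical segments as above, lying outside the strip and hence outside $\Gamma$, attach $D^\pm(x_0,r)$ to the two respective boundary arcs; hence $D^+(x_0,r)$ and $D^-(x_0,r)$ fall into distinct components, which is exactly the separation required of $\Gamma\subset K$. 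The only non-routine point is the geometric contradiction argument locating $p_\ell$ and $p_r$; everything else reduces to invoking ingredients (a)--(c).
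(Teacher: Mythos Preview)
Your proof is correct and follows essentially the same three-step strategy as the paper: extract a compact connected separating subset (Newman/Kuratowski--Whyburn), use finite $\mathcal H^1$-measure to get arcwise connectedness, and then find an injective Lipschitz arc joining two boundary points on opposite sides of the strip that itself separates. The paper simply asserts the existence of the two boundary points $z,z'$ and cites \cite[Proposition~30.14]{d} for the final separation, whereas you supply the explicit contradiction argument for locating $p_\ell,p_r$ and invoke the Jordan arc theorem directly; the only imprecision is that after passing to a sub-arc meeting $\partial B(x_0,r)$ only at its endpoints, those endpoints need not be the original $p_\ell,p_r$---but the same sign argument shows one can choose the sub-arc so that its new endpoints still lie on opposite sides, so the rest goes through unchanged.
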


\begin{proof} 
Since $K$ separates $D^\pm(x_0,r)$ in $\overline{B}(x_0,r)$ there exists a compact and connected set  $\tilde K \subset   K \cap \overline B(x_0,r)$ which still separates (see \cite[Theorem 14.3]{Newman}). Since $\tilde K$ has also finite $\mathcal H^1$ measure, it follows from \cite[Corollary 30.2]{d} that $\tilde K$  is arcwise connected. We denote by $(e_1,e_2)$ an orthonormal system such that $L(x_0,r)$ is directed by $e_1$. Since $\tilde K$ separates   $D^\pm(x_0,r)$ in $\overline{B}(x_0,r)$, then $\tilde K$ must contain at least one point in both connected components of $\partial B(x,r)\cap \{ y \in \R^2 :\; |y_2|\leq \frac{r}{2}\}.$ Denoting by $z$ and $z'$ those two points, there exists a Lipschitz injective curve $\Gamma$ in $\tilde K \cap \overline{B}(x_0,r)$ joining $z$ and $z'$, which separates $D^\pm(x_0,r)$ in $\overline{B}(x_0,r)$ (see for example \cite[Proposition 30.14]{d}). 
\end{proof}

\subsection{Initialization of the main quantities}

We prove that, if $(u,K)$ is a minimizer of the Griffith functional, one can find many balls $\overline B(x_0,r)\subset \Omega$ such that $K$ separates $D^\pm(x_0,r)$ in $\overline{B}(x_0,r)$ and such that $\beta_K(x_0,r)$ and $\omega_u(x_0,r)$ are small for $r>0$ small enough, and for $\mathcal H^1$-a.e. $x_0 \in \Gamma$, where $\Gamma\subset K$ is any connected component of $K\cap \Omega$. The restriction to a connected component $\Gamma$  is only due to ensure the separation property on $K$. Notice that in the following proposition  we do not need the connected component to be isolated.

\begin{proposition} \label{iniT} 
Let  $(u,K)\in \mathcal A(\Omega)$ be a minimizer of the Griffith functional and let $\Gamma\subset K$ be a connected component of  {$K\cap \Omega$} such that $\mathcal H^1(\Gamma)>0$. Then for every $\varepsilon \in {(0,10^{-3})}$ there exists an exceptional set $Z\subset \Gamma$ with $\mathcal{H}^1(Z)=0$ such that the following property holds. For every $x_0 \in \Gamma \setminus Z$, there exists  $r_0>0$, such that 
$$\beta_K(x_0,r_0)\leq \varepsilon, \quad \omega_u(x_0,r_0)\leq \varepsilon$$
and $K$ separates $D^\pm(x_0,r_0)$  {in } $\overline B(x_0,r_0)$. 
\end{proposition}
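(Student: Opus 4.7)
The plan is to decompose $Z = Z_\omega \cup Z_\beta \cup Z_{\rm sep}$ as a union of three $\mathcal{H}^1$-negligible subsets of $\Gamma$, each collecting the points at which the corresponding property (small normalized energy, small flatness, separation) fails at arbitrarily small scales. Outside $Z$, each of the three properties holds for every sufficiently small $r$, and so one can choose a single $r_0$ realizing the three inequalities simultaneously. Notice that $Z$ does not depend on $\varepsilon$.

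For the energy, let $\mu := (\mathbf A e(u):e(u))\,\mathcal L^2$ restricted to $\Omega$, which is a finite Radon measure on $\R^2$ that is absolutely continuous with respect to $\mathcal L^2$. Ahlfors regularity \eqref{ahlfors} gives $\mathcal H^1(K) < +\infty$, hence $\mathcal L^2(K) = 0$ and therefore $\mu(K) = 0$; the classical upper-density estimate $\mathcal H^1(\{x \in K : \Theta^{*,1}(\mu,x) > t\}) \leq C t^{-1}\mu(K) = 0$, valid for every $t > 0$, then yields $\omega_u(x_0,r) \to 0$ as $r \to 0$ at $\mathcal H^1$-a.e. $x_0 \in K$. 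For the flatness, the $\mathcal H^1$-rectifiability of $K$ combined with Ahlfors regularity upgrades in the plane to uniform rectifiability, and in particular the bilateral $\beta$-number satisfies $\beta_K(x_0,r) \to 0$ at $\mathcal H^1$-a.e. $x_0 \in K$, the line realizing the infimum being the approximate tangent $L_{x_0}$, which passes through $x_0$.

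The main step is the separation, and it is the only place where the connectedness of $\Gamma$ is used. Being a compact connected set with $\mathcal H^1(\Gamma) < +\infty$, $\Gamma$ is arcwise connected (as already recalled in the proof of Lemma~\ref{existGamma}) and admits a surjective Lipschitz parametrization $\gamma : [0,T] \to \Gamma$ (see e.g.\ \cite[Proposition 30.14]{d}). At $\mathcal H^1$-a.e. $x_0 \in \Gamma$ one can find $t_0 \in (0,T)$ with $\gamma(t_0) = x_0$ such that $\gamma$ is differentiable at $t_0$ with $\gamma'(t_0) \neq 0$ and is locally injective around $t_0$; after moreover removing the $\mathcal H^1$-null set $Z_\omega \cup Z_\beta$, we also have $\omega_u(x_0,r_0) \leq \varepsilon$ and $\beta_K(x_0,r_0) \leq \varepsilon$ for every $r_0$ below a suitable threshold. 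For such $r_0$, the connected component $[a,b]$ of $\gamma^{-1}(\overline B(x_0,r_0))$ containing $t_0$ is an interior interval whose endpoints satisfy $\gamma(a), \gamma(b) \in \partial B(x_0,r_0)$, so that the simple arc $\Gamma_0 := \gamma([a,b]) \subset K \cap \overline B(x_0,r_0)$ passes through $x_0$ tangentially to $L_{x_0}$ and lies entirely in the strip $\{|y_2| \leq \varepsilon r_0\}$. Since $\gamma'(t_0)$ is directed along $L_{x_0}$, the endpoints $\gamma(a)$ and $\gamma(b)$ fall into the two distinct components of $\partial B(x_0,r_0) \cap \{|y_2| \leq \varepsilon r_0\}$, and a standard planar topology argument (a crosscut of a disc splits it into two topological discs, or equivalently Jordan's theorem applied to $\Gamma_0$ closed up by a boundary arc of $\partial B(x_0,r_0)$) shows that $\Gamma_0$, and \emph{a fortiori} $K$, separates $D^\pm(x_0,r_0)$ in $\overline B(x_0,r_0)$.

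The main obstacle is precisely this separation step: without the connected-component restriction, $K \cap B(x_0,r)$ could a priori be a very irregular set, and no separating arc would exist even at scales where the flatness and the energy are small; it is the connectedness of $\Gamma$ together with the Lipschitz parametrization inherited from $\mathcal H^1(\Gamma) < +\infty$ which allows us to produce the arc $\Gamma_0$ and close the argument.
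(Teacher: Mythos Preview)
Your decomposition $Z=Z_\omega\cup Z_\beta\cup Z_{\rm sep}$ and the treatments of $\omega$ and $\beta$ match the paper's proof essentially line for line. The separation argument differs: the paper invokes \cite[Proposition~2.2(iii)]{BLS}, which asserts that at $\mathcal H^1$-a.e.\ $x_0\in\Gamma$ the orthogonal projection of $\overline\Gamma\cap B(x_0,r)$ onto the approximate tangent $L_{x_0}$ covers $L_{x_0}\cap B(x_0,(1-10^{-3})r)$ for all small $r$, and reads the separation off this surjectivity; you instead parametrize $\overline\Gamma$ by a Lipschitz curve and argue from differentiability at a good preimage $t_0$. Your route is more self-contained, avoiding the external reference.

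Two points need tightening. First, $\Gamma$ is only relatively closed in $\Omega$ and need not be compact; work with $\overline\Gamma$ as the paper does. Second, and more substantively, the sentence ``since $\gamma'(t_0)$ is directed along $L_{x_0}$, the endpoints $\gamma(a)$ and $\gamma(b)$ fall into the two distinct components'' hides a real step: differentiability at the single point $t_0$ does not by itself locate $\gamma(a),\gamma(b)$, because $[a,b]$ could a priori be a large parameter interval on which $\gamma$ wanders inside the ball and exits twice on the same side. What saves you is that differentiability gives $|\gamma(t)-x_0|\geq (1-o(1))\,|\gamma'(t_0)|\,|t-t_0|$ for $t$ near $t_0$, so for $r_0$ small the first exit times on either side of $t_0$ occur at parameter distance $\sim r_0/|\gamma'(t_0)|$, still in the regime where $\gamma(t)\approx x_0+(t-t_0)\gamma'(t_0)$; hence $\gamma(b)$ lands near $x_0+r_0e_1$ and $\gamma(a)$ near $x_0-r_0e_1$. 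Once this is said the crosscut argument goes through---and the local injectivity you assert, which does \emph{not} follow from differentiability at one point, is in fact unnecessary: any continuous arc in the strip joining the two boundary arcs contains a simple sub-arc doing the same, and that sub-arc is a genuine crosscut.
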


\begin{proof} 
The initialization for the quantity $\beta$ is standard (see for instance \cite[Exercice 41.21.3]{d} and \cite[Exercice 41.23.1]{d}), we sketch below the proof for the sake of completeness. 
 
Since $K$ is a rectifiable set, we know that there exists $Z_1\subset K$ with $\mathcal{H}^1(Z_1)=0$ such that, at every point $x_0 \in K\setminus Z_1$, $K$ admits an approximate tangent line $T_{x_0}$, that is
\begin{equation}\label{approxtang}
\lim_{r\to 0}\frac{\mathcal{H}^1(K\cap B(x_0,r)\setminus T_{x_0,\varepsilon r})}{r}=0,
\end{equation}
for all $\varepsilon\in(0,1)$, where $T_{x_0,\varepsilon r}:=\{y\in\R^2:{\rm dist}(y,T_{x_0})\leq \varepsilon r\}$. Since $K$ is also Ahlfors-regular by assumption, it is easily seen that $T_{x_0}$ is the usual tangent, in the sense that for all $\varepsilon\in(0,1)$ there exists $r_\varepsilon>0$ such that 
$$K\cap B(x_0,r)\subset T_{x_0,\varepsilon r},$$
for all $r\leq r_\varepsilon$. Indeed, assume that there exist $\varepsilon_0\in(0,1)$ and two sequences, $r_k\to 0$ and $y_k\in K\cap B(x_0,r_k)$, such that
$${\rm dist}(y_k,T_{x_0})>\varepsilon_0 r_k.$$
Then, by Ahlfors regularity \eqref{ahlfors} we have
$$\mathcal{H}^1\left(K\cap B\left(x_0,2r_k\right)\setminus T_{x_0,\frac{\varepsilon_0 r_k}{2}}\right)\geq \mathcal{H}^1\left(K\cap B\left(y_k,\frac{\varepsilon_0 r_k}{2}\right)\right)\geq \theta_0 \varepsilon_0\frac{r_k}{2}.$$
We conclude that 
$$\liminf_{k\to +\infty}\frac{\mathcal{H}^1\left(K\cap B\left(x_0,2r_k\right)\setminus T_{x_0,\frac{\varepsilon_0 r_k}{2}}\right)}{2r_k}\geq \frac{\theta_0\varepsilon_0}{4}>0,$$
which is against \eqref{approxtang}. Hence, by definition, for all $x_0\in K\setminus Z_1$ and all $\varepsilon\in (0,1)$, there exists $r_1>0$ such that 
$$\beta(x_0,r)\leq \varepsilon \text{ for all } r\leq r_1.$$

Now we consider $\omega(x_0,r)$, which again can be initialized by the same argument used for the standard Mumford-Shah functional (see for instance \cite[Proposition 7.9]{afp}). Let us reproduce it here. We consider the measure $\mu :=\mathbf{A} e(u):e(u)  \mathcal L^2$. For all $t>0$, let 
$$E_t:=\left\{ x\in K : \; \limsup_{r\to 0}\frac{\mu(B(x,r))}{r} >t \right\}.$$
By a standard covering argument  (see \cite[Theorem 2.56]{afp}) one has that 
$$t \mathcal{H}^1(E_t)\leq \mu(E_t).$$
But $E_t \subset K$ and $\mu(K)=0$, thus $\mathcal{H}^1(E_t)=0$ for all $t>0$. By taking a sequence $t_n\searrow 0^+$ and defining $Z_2 := \bigcup_{n} E_{t_n}$, we have that $\mathcal{H}^1(Z_2)=0$ and, for all $x_0\in K\setminus Z_2$,
$$ \lim_{r\to 0}\omega(x_0,r)=0.$$
In other words,  for every $x_0\in K\setminus (Z_1\cup Z_2)$, there exists $r_2<r_1$ such that 
$$  \beta(x_0,r)\leq \varepsilon, \quad \omega(x_0,r)\leq \varepsilon,$$
for all $r\leq r_2$.

It remains to prove the separation property of $K$. To this aim, let us consider a connected component $\Gamma$ of $K\cap \Omega$ which is relatively closed in $K\cap \Omega$. Since $\overline{\Gamma}$ is a compact and connected set in $\R^2$ with $\mathcal{H}^1(\overline{\Gamma})<+\infty$, according to \cite[Proposition 30.1]{d} it is the range of an injective Lipschitz mapping $\gamma:[0,1]\to \overline{\Gamma}$. This implies that $\overline \Gamma$ has an approximate tangent line $L_{x_0}$ for $\mathcal H^1$-a.e. $x_0 \in \overline \Gamma$. In addition, according to \cite[Proposition 2.2.{(iii)}]{BLS}, there exists an exceptional set $Z_3\subset \overline \Gamma$ with $\mathcal{H}^1(Z_3)=0$ and with the property that, for all $x_0\in \Gamma\setminus Z_3$, one can find $r_3>0$ such that, 
\begin{equation}
\pi(\overline{\Gamma}\cap B( x_0,r))\supset L_{x_0}\cap B( x_0,(1-10^{-3})r) \quad \text{ for all } r\leq r_3,\label{projectionEstim}
\end{equation}
where  $\pi:\R^2\to L_{x_0}$ denotes the orthogonal projection onto the line $L_{x_0}$. In particular, if moreover $\beta(x_0,r)\leq \varepsilon \leq 10^{-3}$, it follows that the balls $D^\pm(x_0,(1-10^{-3})r)$ are well defined and, thanks to \eqref{projectionEstim}, that $\overline{\Gamma}$ must separate $D^\pm(x_0,(1-10^{-3})r)$  in $\overline{B}(x_0,(1-10^{-3})r)$, hence  $K$ must separate too. Setting $Z:=Z_1\cup Z_2 \cup Z_3$, then $\mathcal H^1(Z)=0$,  and we have proved that for all  $x_0  \in \Gamma \setminus Z$ and all $r \leq r_0:= \min(r_1,r_2,(1-10^{-3})r_3)$, we obtain that 
$$\beta(x_0,r) \leq \varepsilon, \quad \omega(x_0,r) \leq \varepsilon,$$
and $K$ separates $D^+(x_0,r)$ from $D^{-}(x_0,r)$ in $\overline B(x_0,r)$, as required.
\end{proof}
 
\subsection{Proof of Theorem~\ref {mainTH}}

%%%%%%%%%%%%%%%%%%%%%%%%%%%%%%%%%%%%%%%%

The proof of our main result, Theorem~\ref {mainTH}, rests on both the following results, whose proofs are postponed to the subsequent sections. The first one is a flatness estimate in terms of the renormalized energy, which will be established in Section~\ref {sec5}.
\begin{proposition}\label{main3}  
There exist $\varepsilon_1>0$ and $C_1>0$ (only depending on $\theta_0$, the Ahlfors regularity constant of $K$) such that the following property holds. Let $(u,K) \in \mathcal A(\Omega)$ be a minimizer of the Griffith functional. For all $x_0 \in K$ and $r>0$ such that $\overline B(x_0,r)\subset \Omega$,
$$\omega_u(x_0,r)+\beta_K(x_0,r) \leq \varepsilon_1,$$ 
and $K$ separates $D^{\pm}(x_0,r)$ in $\overline{B}(x_0,r)$, we have
$$\beta_K\left(x_0,\frac{r}{50}\right) \leq  C_1 \omega_u(x_0,r)^{\frac{1}{14}}.$$
\end{proposition}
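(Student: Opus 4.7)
The plan is to run the segment-competitor argument familiar from the Mumford--Shah theory: replace $K$ by a straight segment inside a well-chosen rectangle, use minimality to bound the excess of length of $K$ over that segment, and then convert this excess into a bound on the flatness via a Pythagorean-type inequality (Lemma \ref{pythag}). The departure from the scalar case is that building a competing displacement is not achieved through any Poisson extension; it must instead be extracted from the delicate extension result Lemma \ref{extLem}, whose small parameter $\eta\in(0,1)$ will ultimately be balanced against $\omega:=\omega_u(x_0,r)$.

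After rescaling to $x_0=0$ and $r=1$, fix $\eta\in(0,1)$ (to be chosen at the end). Lemma \ref{extLem} produces a rectangle $U$ with $\overline B(0,1/5)\subset U\subset B(0,1)$ and a ``wall set'' $\Sigma\subset\partial U$ satisfying $K\cap\partial U\subset\Sigma$ and $\mathcal{H}^1(\Sigma)\leq \eta$. Let $S$ be the segment along the long axis of $U$; by the smallness of $\beta_K(0,1)$, $S$ lies in a narrow strip about the flattening line and separates $U$ in the sense required by the lemma. Applying Lemma \ref{extLem} with the separating competitor $K':=S$ yields $v\in LD(U\setminus S)$ with $v=u$ on $\partial U\setminus\Sigma$ and
$$\int_{U\setminus S}\mathbf A e(v):e(v)\,dx\ \leq\ \frac{C}{\eta^6}\int_{B(0,1)\setminus K}\mathbf A e(u):e(u)\,dx\ =\ \frac{C\omega}{\eta^6}.$$
Gluing $v$ to $u$ across $\partial U\setminus\Sigma$ gives an admissible competitor: the pair with displacement $\tilde u:=u\,\mathbf 1_{\Omega'\setminus U}+v\,\mathbf 1_{U\setminus S}$ and crack $\tilde K:=(K\setminus U)\cup S\cup\Sigma$ belongs to $\mathcal A(\Omega)$ and carries the same Dirichlet datum $\psi$ as $u$, since $U\subset\Omega$.

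Minimality of $(u,K)$, after dropping the nonnegative bulk term supported in $U\setminus K$, then gives
$$\mathcal{H}^1(K\cap U)\ \leq\ |S|+\mathcal{H}^1(\Sigma)+\int_{U\setminus S}\mathbf A e(v):e(v)\,dx\ \leq\ |S|+\eta+\frac{C\omega}{\eta^6}.$$
Choosing $\varepsilon_1$ so small that $16\varepsilon_1\leq 1/50$, Lemma \ref{topological1} guarantees that $K$ still separates $D^\pm(0,1/50)$ in $\overline B(0,1/50)$ and that $\beta_K(0,1/50)\leq 1/2$. This places us in the regime of the Pythagorean inequality of Lemma \ref{pythag}, which quantitatively converts the excess $\mathcal{H}^1(K\cap U)-|S|$ into
$$\beta_K\bigl(0,\tfrac{1}{50}\bigr)^{\,2}\ \leq\ C\bigl(\eta+\omega/\eta^6\bigr).$$

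It then remains to optimize in $\eta$: the choice $\eta:=\omega^{1/7}$ balances the two terms on the right-hand side and yields $\beta_K(0,1/50)^2\leq C\omega^{1/7}$, that is $\beta_K(0,1/50)\leq C_1\omega^{1/14}$. The smallness hypothesis $\omega+\beta\leq\varepsilon_1$ is used twice: to keep $\eta=\omega^{1/7}$ within the admissible range of Lemma \ref{extLem}, and to enable the invocation of Lemma \ref{topological1}. The genuinely hard ingredient, postponed to Section \ref{sec5}, is Lemma \ref{extLem}; here it is used as a black box, together with Lemma \ref{pythag}, and only the minimality comparison and the $\eta$-optimization need to be carried out.
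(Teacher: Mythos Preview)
Your overall strategy is the paper's: invoke Lemma~\ref{extLem} to manufacture a competitor with wall set $\Sigma$ of length $\lesssim\eta r$, compare via minimality, and optimize with $\eta=\omega^{1/7}$. The density estimate
\[
\mathcal H^1(K\cap U)\ \le\ |S|+C\eta r+\frac{C}{\eta^6}\,r\,\omega
\]
that you obtain is essentially the content of the paper's Proposition~\ref{main1} (there the competitor segment is $[z,z']$ with $\{z,z'\}=K\cap\partial B(x_0,s)$ for a good radius $s$ found via Lemma~\ref{card2}, rather than the axis of $U$; both versions work).

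There is, however, a genuine gap in the passage ``Lemma~\ref{pythag} quantitatively converts the excess $\mathcal H^1(K\cap U)-|S|$ into $\beta_K(0,1/50)^2\le C(\eta+\omega/\eta^6)$''. Lemma~\ref{pythag} is a statement about a single Lipschitz curve with prescribed endpoints; it says nothing about the set $K\cap U$, which need not be a curve and certainly need not have its endpoints on $S$. The paper fills this in with four separate steps that you have skipped:
\begin{enumerate}
\item extract, via Lemma~\ref{existGamma}, an injective curve $\Gamma\subset K$ joining two specific points $z,z'$ (this is why the paper first pins down $\{z,z'\}=K\cap\partial B(x_0,s)$);
\item use Ahlfors regularity to show that every $y\in K\cap\overline B(x_0,r/50)$ lies within $\tfrac{C}{\theta_0}r\omega^{1/7}$ of $\Gamma$, since $\mathcal H^1(K\cap B(x_0,s)\setminus\Gamma)\le Cr\omega^{1/7}$;
\item apply Lemma~\ref{pythag} to $\Gamma$ itself and combine with (2) by the triangle inequality;
\item pass from ``close to $[z,z']$'' to ``$\beta_K(x_0,r/50)$ small'' by shifting to the parallel line through $x_0$ and verifying the second half of the $\beta$ definition (every point of $L$ is close to $K$), which uses the separation property again.
\end{enumerate}
None of these is hard once identified, but they are not contained in Lemma~\ref{pythag}, and step~(2) is where the dependence on the Ahlfors constant $\theta_0$ enters the final $C_1$. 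Your sketch should make these steps explicit; the paper's Steps~1--2 in the proof of Proposition~\ref{main3} show exactly how.
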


The second result is the following normalized energy decay which will be proved in Section~\ref {sec6}.

\begin{proposition}\label{JFprop}
For all $\tau>0$, there exists $a \in (0,1)$ and $\varepsilon_2 >0$ such that the following property holds. Let $(u,K) \in \mathcal A(\Omega)$ be a minimizer of the Griffith functional, and let $\Gamma$ be an isolated connected component of  $K\cap \Omega$ such that $\mathcal{H}^1(\Gamma)>0$. Let $x_0 \in \Gamma$ and $r>0$ be such that $\overline B(x_0,r) \subset \Omega$ and 
$$K\cap \overline B(x_0,r)=\Gamma\cap \overline B(x_0,r), \quad \beta_K(x_0,r)\leq \varepsilon_2,$$
then we have $$\omega_u\left(x_0,ar \right) \leq \tau \,\omega_u(x_0,r).$$
\end{proposition}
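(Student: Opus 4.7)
The proof will be by contradiction: we fix $\tau>0$, choose $a\in(0,1)$ small enough depending on $\tau$ (to be specified at the end), assume that for every $\varepsilon_2>0$ the conclusion fails, and derive a limit biharmonic problem via the Airy formalism whose regularity contradicts the assumption.

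\emph{Blow-up.} Taking $\varepsilon_2=1/n$, one extracts a sequence of minimizers $(u_n,K_n)$, isolated components $\Gamma_n\subset K_n\cap\Omega$, and balls $\overline B(x_n,r_n)\subset\Omega$ with $x_n\in\Gamma_n$, $K_n\cap\overline B(x_n,r_n)=\Gamma_n\cap\overline B(x_n,r_n)$, $\beta_{K_n}(x_n,r_n)\leq 1/n$, and $\omega_{u_n}(x_n,ar_n)>\tau\,\omega_{u_n}(x_n,r_n)$. Writing $\omega_n:=\omega_{u_n}(x_n,r_n)$ and setting
$$v_n(y):=\frac{u_n(x_n+r_n y)}{\sqrt{r_n\,\omega_n}},\qquad K'_n:=\frac{K_n-x_n}{r_n},$$
one gets $\omega_{v_n}(0,1)=1$, $\beta_{K'_n}(0,1)\leq 1/n$, $\omega_{v_n}(0,a)>\tau$, and a rescaled quasi-minimality of $(v_n,K'_n)$ on $B(0,1)$ (with length-term coefficient $1/\omega_n$). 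By Blaschke, a subsequence of $K'_n\cap\overline B(0,1)$ Hausdorff-converges to a diameter $L$ of $B(0,1)$. The \emph{isolation hypothesis} on $\Gamma_n$, combined with $K_n\cap\overline B(x_n,r_n)=\Gamma_n\cap\overline B(x_n,r_n)$, is used here to guarantee that $K'_n\cap\overline B(0,1)$ is a single connected arc converging to $L$, with no stray components elsewhere in $\overline B(0,1)$.

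\emph{Airy function and limit problem.} Because Korn's inequality fails on $B(0,1)\setminus K'_n$, we pass to the scalar \emph{Airy function} $W_n$ associated to $v_n$ via Proposition \ref{prop:airy}: $W_n$ is biharmonic on $B(0,1)\setminus K'_n$ with $|D^2 W_n|=|\mathbf A e(v_n)|$. The traction-free condition for $v_n$ on $K'_n$ translates, after subtracting an affine part on each connected component of $B(0,1)\setminus K'_n$, into $W_n=\partial_\nu W_n=0$ on both sides of $K'_n$. A Poincar\'e-type inequality for $H^2$ functions vanishing together with their normal derivative on a set of positive capacity then yields $\|W_n\|_{H^2(B(0,1))}\leq C$. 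Up to a subsequence, $W_n\rightharpoonup W_\infty$ weakly in $H^2$ and strongly in $H^1$, with $W_\infty$ biharmonic in $B(0,1)\setminus L$ and satisfying $W_\infty=\partial_{e_2}W_\infty=0$ on each side of $L$. Classical boundary regularity for the biharmonic operator on a flat-boundary half-disk gives $\|D^2 W_\infty\|_{L^\infty(B(0,1/2))}\leq C$, so
$$\omega_{W_\infty}(0,a):=\frac{1}{a}\int_{B(0,a)}|D^2 W_\infty|^2\,dy\leq C' a\qquad\text{for all }a\in(0,\tfrac12).$$

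\emph{Energy convergence and conclusion.} To close the contradiction, one upgrades the weak convergence $D^2 W_n\rightharpoonup D^2 W_\infty$ into the upper bound $\limsup_n\omega_{v_n}(0,a)\leq \omega_{W_\infty}(0,a)$. This is done by a standard comparison argument: reconstruct from $W_\infty$ a displacement $\tilde v_\infty^\pm$ on each of the two simply connected half-disks of $B(0,1)\setminus L$, and use the extension Lemma \ref{extLem} to build, for each $n$, admissible competitors agreeing with $v_n$ near $\partial B(0,a)$ and with $\tilde v_\infty$ deeper inside. The quasi-minimality of $v_n$ then majorizes its bulk energy on $B(0,a)$ by that of the competitor, hence by $\omega_{W_\infty}(0,a)+o(1)\leq C'a+o(1)$. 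Choosing $a$ so small that $C'a<\tau$ contradicts $\omega_{v_n}(0,a)>\tau$ for large $n$. The main obstacle is precisely this compactness and limit-identification step: converting an $L^2$-control on the symmetric gradient into an $H^2$-control on a scalar potential (a purely 2D construction), transferring the Dirichlet-type conditions on the retreating cracks to $L$ (where the isolation hypothesis enters crucially, since otherwise $K'_n$ could accumulate onto more than $L$ and change the limit problem), and building admissible displacements from the scalar limit $W_\infty$ in order to pass minimality to the limit.
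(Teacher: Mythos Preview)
Your overall architecture matches the paper's proof: contradiction, blow-up to the unit ball, passage to the Airy function to gain $H^2$ compactness, identification of a limit biharmonic problem on $B\setminus L$ with Dirichlet data on $L$, and a decay estimate for the limit that contradicts $\omega_{v_n}(0,a)>\tau$. The decay step you propose (boundary regularity for the biharmonic operator on a flat half-disk, giving $D^2W_\infty\in L^\infty_{\mathrm{loc}}$ and hence $\omega_{W_\infty}(0,a)\leq C'a$) is a legitimate alternative to what the paper does, namely a Poritsky--Duffin biharmonic reflection of $W_\infty|_{B^+}$ across $L$, reconstruction of a displacement $\tilde v$ solving the Lam\'e system in the full ball, and then the standard elliptic decay $\int_{B(0,r)}\mathbf Ae(\tilde v):e(\tilde v)\leq c_\gamma r^{2-\gamma}$. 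Both routes give a usable smallness of the limit energy.

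There is, however, a genuine gap in your energy-convergence step. You need $\limsup_n\omega_{v_n}(0,a)\leq \omega_{W_\infty}(0,a)$, and you propose to obtain it by building competitors for $v_n$ via Lemma~\ref{extLem}. This does not work as written: Lemma~\ref{extLem} requires $\omega+\beta$ small, but after your normalization $\omega_{v_n}(0,1)=1$, so its hypotheses fail; moreover that lemma produces a function matching $v_n$ only outside a wall set on the boundary of a rectangle, which is not the gluing tool you describe. The ``quasi-minimality of $v_n$'' you invoke is the elastic minimality of $v_n$ with its own boundary data on $\partial B(0,a)\setminus K'_n$, and exploiting it would require competitors that agree with $v_n$ there, something your construction does not provide. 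The paper avoids all of this: it observes that the rescaled Airy functions $w_n$ minimize $\int_B|D^2z|^2$ among $z\in w_n+H^2_{0,K'_n}(B)$, and then uses a $\Gamma$-convergence/capacity argument (as in \cite[Proposition~6.1]{BCL}) to show that this family of minimization problems is stable as $K'_n\to L$, yielding $w_n\to w$ \emph{strongly} in $H^2(B(0,r))$ for every $r<1$. Since $|D^2w_n|=|\mathbf Ae(v_n)|$, strong $H^2$ convergence of the Airy functions gives immediately $\omega_{v_n}(0,a)\to\omega_v(0,a)$, and no displacement-level competitor construction is needed.

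A smaller point on the role of the isolation hypothesis: it is not used to force $K'_n\cap\overline B$ to be a single arc, but rather to guarantee that the crack in the ball coincides with (part of) a single connected component of $K_n$. This is exactly what Proposition~\ref{prop:airy} (and Remark~\ref{lastremark}) need in order to normalize the Airy function so that $w_n\in H^2_{0,K'_n}(B)$; without it the Airy function would only vanish on one connected piece of the crack in the ball, and the capacity argument transferring the Dirichlet data to the limit diameter would break down.
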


With both previous results at hands, we are in position to bootstrap the preceding decay estimates in order to get a $\mathcal C^{1,\alpha}$-regularity estimate. Indeed, the conclusion of Theorem~\ref {mainTH} will follow from the following result. 

\begin{proposition}
Let $(u,K) \in \mathcal A(\Omega)$ be a minimizer of the Griffith functional and let $\Gamma$ be an isolated connected component of  $K\cap \Omega$. Then, there exists a relatively closed set $Z\subset \Gamma$ with $\mathcal{H}^1(Z)=0$ such that for every $x_0\in \Gamma \setminus Z$, one can find $r_0>0$ such that $\Gamma \cap B(x_0,r_0)$ is a $\mathcal C^{1,\alpha}$ curve.
\end{proposition}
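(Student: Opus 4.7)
My plan is to let $Z$ be the exceptional set produced by Proposition~\ref{iniT}, to iterate Propositions~\ref{JFprop} and~\ref{main3} at each $x_0 \in \Gamma \setminus Z$ to establish a power decay $\beta_K(x_0, r) \leq C r^\alpha$, and then to upgrade this pointwise decay to a uniform bound on a neighborhood of $x_0$ in $\Gamma$, at which point the flatness criterion Lemma~\ref{c1estimates} concludes. Fix $x_0 \in \Gamma \setminus Z$ and $\varepsilon \in (0, \min(\varepsilon_1, \varepsilon_2)]$; Proposition~\ref{iniT} provides a scale $r_0 > 0$ with $\overline B(x_0, r_0) \subset \Omega$, $\beta_K(x_0, r_0), \omega_u(x_0, r_0) \leq \varepsilon$, and $K$ separating $D^\pm(x_0, r_0)$ in $\overline B(x_0, r_0)$. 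Because $\Gamma$ is \emph{isolated} in $K \cap \Omega$, I can further shrink $r_0$ so that $K \cap \overline B(x_0, r_0) = \Gamma \cap \overline B(x_0, r_0)$; this is the unique place where the isolation hypothesis enters directly, and it ensures that Proposition~\ref{JFprop} is applicable at every scale $r \leq r_0$.

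Choose $\tau \in (0,1)$ very small in Proposition~\ref{JFprop}, let $a \in (0,1)$ be the resulting constant, and (by replacing $(a, \tau)$ with $(a^N, \tau^N)$ if necessary) arrange that $a < 1/100$. Setting $r_k := a^k r_0$, I would prove by induction on $k$ that
\[
\omega_u(x_0, r_k) \leq \tau^k \varepsilon, \qquad \beta_K(x_0, r_k) \leq \varepsilon/2,
\]
together with persistence of the separation of $D^\pm(x_0, r_k)$ by $K$ in $\overline B(x_0, r_k)$. The $\omega$ inequality is a direct application of Proposition~\ref{JFprop}; the $\beta$ inequality uses Proposition~\ref{main3} to obtain $\beta_K(x_0, r_k/50) \leq C_1 \omega_u(x_0, r_k)^{1/14} \leq C_1 \tau^{k/14} \varepsilon^{1/14}$, and then the crude monotonicity \eqref{brutest2} (noting $r_{k+1} < r_k/50$) absorbs the factor $1/(50a)$ provided $\varepsilon$ is small enough; separation persists from $r_k$ to $r_{k+1}$ via Lemma~\ref{topological1} since $\beta_K(x_0, r_k) \to 0$. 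Interpolating through intermediate scales via \eqref{brutest2} then yields, for some $\alpha \in (0,1)$,
\[
\beta_K(x_0, r) \leq C (r/r_0)^\alpha \qquad \text{for all } r \in (0, r_0].
\]

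Finally, I would extend this pointwise decay to a uniform estimate on $\Gamma \cap B(x_0, r_0/4)$: for $x$ in this smaller ball, smallness of $\beta_K(x, r_0/2)$ and $\omega_u(x, r_0/2)$ is inherited from the bounds at $x_0$ via elementary ball inclusions, while separation of $D^\pm(x, r_0/2)$ by $K$ follows from the Lipschitz-arc description of $\Gamma \cap \overline B(x_0, r_0)$ provided by Lemma~\ref{existGamma}. Rerunning the iteration above centered at $x$ produces $\beta_K(x, r) \leq C (r/r_0)^\alpha$ uniformly in $x \in \Gamma \cap B(x_0, r_0/4)$ and small $r$, so Lemma~\ref{c1estimates} exhibits $\Gamma \cap B(x_0, r_0/8)$ as a $\mathcal{C}^{1,\alpha}$ curve, which is the desired conclusion. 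The main obstacle I anticipate is precisely this uniformization step: verifying the separation and smallness hypotheses at the moving center $x$ while preserving constants requires combining Lemmas~\ref{topological1} and~\ref{existGamma} with the local curve structure of $\Gamma$, and it is here that I expect the bulk of the technical work to lie.
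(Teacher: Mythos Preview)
Your approach is essentially the paper's: initialize via Proposition~\ref{iniT}, iterate Propositions~\ref{JFprop} and~\ref{main3} to get a power decay for $\beta$, transfer the estimate to nearby centers, and conclude via Lemma~\ref{c1estimates}.

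One bookkeeping point does need fixing. Your induction hypothesis $\beta_K(x_0,r_k)\le\varepsilon/2$ with the \emph{same} $\varepsilon$ bounding $\omega$ cannot close: at the first step you obtain $\beta_K(x_0,r_1)\le \tfrac{C_1}{50a}\,\varepsilon^{1/14}$, and requiring this to be $\le\varepsilon/2$ forces $\varepsilon^{13/14}\ge C_1/(25a)$, which is impossible for small $\varepsilon$ (and incompatible with $\varepsilon\le\varepsilon_1$ for larger $\varepsilon$). So ``provided $\varepsilon$ is small enough'' goes the wrong way. The paper resolves this by decoupling the two thresholds: a fixed $\delta_1=\min(\varepsilon_1,\varepsilon_2,10^{-3}a)$ for $\beta$ and a much smaller $\delta_2\le (a\delta_1/C_1)^{14}$ for $\omega$, so that $\beta\le (C_1/a)\,\omega^{1/14}\le (C_1/a)\,\delta_2^{1/14}\le\delta_1$ regenerates automatically. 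With this adjustment your iteration goes through exactly as in the paper (which uses scale ratio $b=a/50$ rather than $a$, but this is cosmetic).

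The uniformization step you single out as the main obstacle is in fact dispatched in one line. Having initialized with a factor-two slack ($\omega(x_0,r)\le\delta_2/2$, $\beta(x_0,r)\le\delta_1/2$), for any $x\in K\cap B(x_0,r/2)$ the inclusion $B(x,r/2)\subset B(x_0,r)$ immediately gives $\omega(x,r/2)\le\delta_2$ and $\beta(x,r/2)\le\delta_1$, and separation at $(x,r/2)$ is inherited from separation at $(x_0,r)$ since $D^\pm(x,r/2)$ lie on opposite sides of the thin strip containing $K\cap\overline B(x_0,r)$; no appeal to Lemma~\ref{existGamma} is needed.
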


\begin{proof} 
Let $\varepsilon_1>0$ and $C_1>0$ be the constants given by Proposition~\ref {main3}, and let $a>0$ and $\varepsilon_2>0$ be the constants given by Proposition~\ref {JFprop} corresponding to $\tau = {10}^{-2}$. We define
$$\delta_1:=\min \left\{\varepsilon_1,\varepsilon_2,10^{-3} a\right\},\quad \delta_2:=\min\left\{\delta_1,\left(\frac{a \delta_1}{C_1}\right)^{14}\right\}.$$

We can assume that $\mathcal{H}^1(\Gamma)>0$, otherwise the Proposition is trivial. Using  that  $\Gamma$ is an isolated connected component of $K\cap \Omega$ and Proposition~\ref {iniT}  (applied with $\varepsilon=\min(\delta_1,\delta_2)/2$), we can find an exceptional set $Z\subset \Gamma$ with $\mathcal{H}^1(Z)=0$ such that the following property holds: for every $x_0 \in \Gamma \setminus Z$, there exists  $r>0$, such that 
$$K\cap \overline B(x_0,r)=\Gamma\cap \overline B(x_0,r),$$
\begin{equation}
 \omega(x_0,r)\leq   \frac{\delta_2}{2}\;,\; \quad \beta(x_0,r) \leq   \frac{\delta_1}{2},\text{ and } K \text{ separates }  D^{\pm}(x_0,r) \text{ in } \overline{B}(x_0,r).\label{initialisation0}
\end{equation}
We start by showing a first self-improving estimate which stipulates that  the quantities $\omega(x_0,r)$ and $\beta(x_0,r)$ will remain small at all smaller scales. 

\medskip

{\bf Step 1.} We define $b:=a/50$, and we claim that if 
$$\omega(x_0,r)\leq \delta_2\;,\; \quad \beta(x_0,r) \leq \delta_1,\text{ and } K \text{ separates }  D^{\pm}(x_0,r) \text{ in } \overline{B}(x_0,r),$$
then the following three assertions are true:
\begin{equation}
\omega(x_0,br)\leq \delta_2\;,\; \quad \beta(x_0,br) \leq \delta_1,\text{ and } K \text{ separates }  D^{\pm}(x_0,br) \text{ in } \overline{B}(x_0,br)
 \label{claimp}
\end{equation}
\begin{eqnarray}
\omega(x_0,b r)&\leq&   \frac{1}{2}\omega(x_0,r)  \label{ooo} \\
\beta\left(x_0,br\right) &\leq&    \frac{C_1}{a} \omega(x_0,r)^{\frac{1}{14}}. \label{bbb}
\end{eqnarray}
Let us start with the renormalized energy. By Proposition~\ref {JFprop} we get 
$$\omega(x_0,ar)\leq 10^{-2}\omega(x_0,r),$$
which yields using \eqref{brutest1}
$$\omega(x_0,b r)\leq  50\, \omega(x_0,a r)\leq \frac{1}{2}\omega(x_0,r)\leq \delta_2. $$
For what concerns the flatness, we can apply Proposition~\ref {main3}, so that 
$$\beta\left(x_0,\frac{r}{50}\right) \leq  C_1 \omega(x_0,r)^{\frac{1}{14}}.$$
Thus by \eqref{brutest2} we get
$$\beta(x_0,br) \leq    \frac{1}{a}\beta\left(x_0,\frac{r}{50}\right)\leq \frac{C_1}{a} \omega(x_0,r)^{\frac{1}{14}} \leq \delta_1,$$
because  $\delta_2\leq \left(\frac{a \delta_1}{C_1}\right)^{14}$.
Finally, since $\delta_1 \leq 10^{-3} a$, we infer that $16\delta_1 r \leq br <r$, so that $K$ still separates $D^\pm(x_0,br)$ in $\overline{B}(x_0,br)$ owing to Lemma~\ref {topological1} (applied with $\tau=\delta_1$), and thus the claim is proved.

\medskip

{\bf Step 2.} Iterating the decay estimate established in Step 1, we get that \eqref{claimp}, \eqref{ooo}, and \eqref{bbb} hold true in each ball $B(x_0,b^kr)$, $k \in \mathbb N$. We thus obtain that 
$$\omega(x_0,b^kr)\leq 2^{-k}\omega(x_0,r)$$
and subsequently, using now \eqref{bbb},
$$\beta(x_0,b^kr)\leq \frac{C_1}{a} 2^{-\frac{k-1}{14}} \omega(x_0,r)^{\frac{1}{14}}.$$
If $t \in (0,1)$ we let $k\geq 0$ be the integer such that  
$$b^{k+1}\leq t< b^{k}.$$
Notice in particular that
$$k+1\geq \frac{\ln(1/t)}{\ln(1/b)} > k  $$
thus $2^{-\frac{k+1}{14}}\leq t^\alpha$ with $\alpha=\frac{\ln(2)}{14|\ln(b)|}\in (0,1)$. We deduce that 
$$\beta(x_0,tr)\leq \frac{b^k}{t}\beta(x_0,b^k r)\leq \frac{C_1}{ab}2^{-\frac{k-1}{14}}  \omega(x_0,r)^{\frac{1}{14}}\leq C t^\alpha \omega(x_0,r)^\frac{1}{14},$$
for some constant $C>0$ only depending on $C_1$ and $a$.

\medskip

{\bf Step 3.} We now conclude the proof of the proposition. Indeed, according to \eqref{initialisation0}, for every $x\in K\cap B(x_0,r/2)$ we still have 
$$\omega(x,r/2)\leq \delta_2, \quad \beta(x,r/2)\leq \delta_1$$
and $K$ separates $D^\pm(x,r/2)$ in $\overline{B}(x,r/2)$.   Thus, by Steps 1 and 2 applied in each ball $B(x,r/2)$ with $x \in K\cap B(x_0,r/2)$, we deduce that 
$$\beta(x,tr)\leq   C\delta_2^{\frac{1}{14}} t^\alpha \quad \text{ for all }t\in (0,1/2),$$
and since this is true for all $x\in K\cap B(x_0,r/2)$, we deduce that $K\cap B(x_0,a_0r)$ is a $\mathcal C^{1,\alpha}$ curve for some $a_0\in(0,1/2)$ thanks to Lemma~\ref {c1estimates} in the appendix.
\end{proof}

\section{Proof of the flatness estimate}\label{sec5}

In order to prove Proposition~\ref{main3}, we need to construct a competitor in a ball $B(x_0,r)$, where the flatness $\beta(x_0,r)$ and the renormalized energy $\omega(x_0,r)$ are small enough. The main difficulty is to control how the crack behaves close to the boundary of the ball. A first rough competitor is constructed in Propositions~\ref {prop1} and \ref {prop_para} by introducing a wall set of length $r\beta(x_0,r)$ on the boundary. It leads to density estimates in balls (or alternatively in rectangles) which state that, provided the crack is flat enough, the energy density scales like {the diameter} of the ball (or the width of the rectangle), up to a small error depending on $\beta(x_0,r)$ and $\omega(x_0,r)$. 

Unfortunately, this rough competitor is not sufficient to get a convenient flatness estimate leading to the desired regularity result. A better competitor is obtained by suitably localizing the crack in two almost opposite boxes of size $\eta>0$, arbitrarily small (see Lemma~\ref{bow-tie}). Then we can define a competitor inside a larger rectangle $U$, whose vertical sides intersect both the small boxes. The crack competitor is then defined by taking   {an almost horizontal segment} inside the rectangle, together with a new wall set $\Sigma \subset \partial U$ of arbitrarily small length, made of the intersection of the rectangle with the boxes. It is then possible to introduce a displacement competitor (see Lemma~\ref{extLem}) by extending the value of $u$ on $\partial U \setminus \Sigma$ inside $U$. The price to pay is that the bound on the elastic energy associated to this competitor might diverge as the length of the wall set is small. It is however possible to optimize the competition between the flatness and the renormalized energy associated to this competitor by taking $\eta=\omega(x_0,r)^{1/7}$, leading to the conclusion of Proposition~\ref{main3}.

\subsection{Density estimates}
  
%%%%%%%%%%%%%%%%%%%%%%%%%%%%%%%

In this section we prove some density estimates for the set $K$. Such estimates will be useful to select good radii, in a way that the corresponding spheres intersect the set $K$ in two almost opposite points. One of the main tools to construct competitors will be the following extension lemma.

\begin{lemma}[Harmonic extension in a ball from an arc of circle]\label{extensionL} 
Let $0<\delta\leq 1/2$, $x_0 \in \mathbb R^2$, $r>0$ and 
let $\mathscr C_\delta\subset\partial B(x_0,r)$ 
be the arc of circle defined by $$\mathscr C_\delta := \{(x_1,x_2)\in \partial B(x_0,r) \; : \; (x-x_0)_2> \delta r \}.$$
Then, there exists a constant $C>0$ (independent of $\delta$, $x_0$,  and $r$) such that every function $u \in H^1(\mathscr C_\delta;\R^2)$ extends to a function $g \in H^1(B(x_0,r);\R^2)$ with $g=u$ on $\mathscr C_\delta$ and 
$$\int_{B(x_0,r)} |\nabla g|^2  \, dx\leq C r\int_{\mathscr C_\delta} |\partial_\tau u|^2  \, d\mathcal{H}^1.$$
\end{lemma}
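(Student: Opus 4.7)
The plan is to reduce the lemma to the unit disk by scaling, subtract the mean of $u$ on the arc, build a controlled extension of the mean-free function to the whole circle, and then harmonically extend it to the interior of the disk.

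First I would reduce to the case $x_0=0$, $r=1$ by a standard scaling argument: if we rescale $y=(x-x_0)/r$, then gradients pick up a factor $r^{-2}$, the area element a factor $r^2$, and the arc-length element a factor $r$, so the claimed inequality is scale-invariant up to the factor $r$ on the right-hand side. Once on the unit disk, set
$$c:=\frac{1}{\mathcal H^1(\mathscr C_\delta)}\int_{\mathscr C_\delta} u\, d\mathcal H^1\in\R^2,$$
and work with $\tilde u:=u-c$. Since adding a constant vector does not change the gradient, it suffices to extend $\tilde u$ to some $g\in H^1(B(0,1);\R^2)$ with $g=\tilde u$ on $\mathscr C_\delta$ and $\|\nabla g\|_{L^2(B(0,1))}^2\le C\|\partial_\tau u\|_{L^2(\mathscr C_\delta)}^2$. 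The desired function in the statement is then $g+c$.

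The key observation is that since $\delta\le 1/2$, the angular length of $\mathscr C_\delta$ equals $\pi-2\arcsin\delta\in [2\pi/3,\pi]$, so both $\mathscr C_\delta$ and its complementary arc in $\partial B(0,1)$ have length bounded above and below by absolute constants. I would then use, on the one hand, the one-dimensional Poincar\'e inequality on the arc,
$$\|\tilde u\|_{L^2(\mathscr C_\delta)}^2\le C\|\partial_\tau u\|_{L^2(\mathscr C_\delta)}^2,$$
and, on the other hand, the one-dimensional Sobolev embedding $H^1\hookrightarrow L^\infty$, which gives continuity of $\tilde u$ up to the endpoints $P_1,P_2$ of $\mathscr C_\delta$ together with
$$|\tilde u(P_1)|^2+|\tilde u(P_2)|^2\le C\|\tilde u\|_{H^1(\mathscr C_\delta)}^2\le C\|\partial_\tau u\|_{L^2(\mathscr C_\delta)}^2.$$
Both constants are uniform in $\delta\in(0,1/2]$ precisely because the arc length is uniformly bounded below; this is the one point where the hypothesis $\delta\le 1/2$ is used, and it is the only real obstacle to the proof. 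I would then define an $H^1$ extension $v$ of $\tilde u$ to all of $\partial B(0,1)$ by setting $v=\tilde u$ on $\mathscr C_\delta$ and letting $v$ linearly interpolate along the complementary arc between $\tilde u(P_1)$ and $\tilde u(P_2)$, passing through $0$ at the antipodal point. Thanks to the pointwise bounds above and the fact that the complementary arc also has length of order $1$, this gives
$$\|v\|_{H^1(\partial B(0,1))}^2\le C\|\partial_\tau u\|_{L^2(\mathscr C_\delta)}^2.$$

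Finally, I would take $g$ to be the componentwise harmonic (Poisson) extension of $v$ to $B(0,1)$. The classical trace inequality
$$\int_{B(0,1)} |\nabla g|^2\, dx \le C\|v\|_{H^{1/2}(\partial B(0,1))}^2\le C\|v\|_{H^1(\partial B(0,1))}^2$$
combined with the bound on $\|v\|_{H^1(\partial B)}^2$ yields exactly the required estimate on the unit disk. Undoing the scaling produces the factor $r$ on the right-hand side, and setting $g+c$ provides the function claimed in the lemma. The only delicate point, as noted, is the uniform control of all constants as $\delta\downarrow 0$, which follows from the uniform two-sided bound on the length of $\mathscr C_\delta$ under the assumption $\delta\le 1/2$.
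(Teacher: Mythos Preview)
Your proof is correct and follows the same overall strategy as the paper: extend $u$ from the arc $\mathscr C_\delta$ to an $H^1$ function on the full circle with tangential derivative controlled by $\|\partial_\tau u\|_{L^2(\mathscr C_\delta)}$, then take the harmonic extension to the ball and bound its Dirichlet energy by the $H^1$ (hence $H^{1/2}$) norm of the boundary datum. The only difference is in how the extension to the full circle is built: the paper composes with a bilipschitz map $\Phi:\mathscr C_\delta\to\mathscr C_0$ (uniform in $\delta\le 1/2$) and reflects across the diameter, whereas you subtract the mean, use Poincar\'e and the one-dimensional Sobolev embedding to control the endpoint values, and then interpolate across the complementary arc. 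Both devices rely on exactly the same fact---that $\delta\le 1/2$ gives a uniform lower bound on $\mathcal H^1(\mathscr C_\delta)$---and yield the same estimate; your construction is arguably more explicit, while the paper's reflection argument is a bit shorter since it avoids separate Poincar\'e and trace steps.
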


\begin{proof} 
Let $\Phi : \mathscr C_\delta\to \mathscr C_0$ be a bilipschitz mapping with Lipschitz constants independent of $\delta \in (0,1/2]$, $x_0$, and  $r>0$. Since $u\circ \Phi^{-1}\in H^1(\mathscr C_0;\R^2)$, we can define the extension by reflection $\tilde u \in H^1(\partial  B(x_0,r); \R^2)$ on the whole circle $\partial B(x_0,r)$, that satisfies
$$\int_{\partial B(x_0,r)}|\partial_\tau \tilde u|^2 d\mathcal{H}^1 \leq C \int_{\mathscr C_\delta} |\partial_\tau u|^2  \, d\mathcal{H}^1,$$
where $C>0$ is a constant which is independent of $\delta$.

We next define $g$ as the harmonic extension of $\tilde u$ in $B(x_0,r)$. Using \cite[Lemma 22.16]{d}, we obtain
$$\int_{B(x_0,r)} |\nabla  g|^2  \, dx\leq Cr\int_{\partial B(x_0,r)}|\partial_\tau \tilde u|^2\, d\mathcal{H}^1 
 \leq C  r\int_{\mathscr C_\delta}|\partial_\tau u|^2  \, d\mathcal{H}^1,$$
which completes the proof.
\end{proof}

\begin{lemma}[Extension lemma, first version]\label{extension} 
Let $(u,K) \in \mathcal A(\Omega)$ be a minimizer of the Griffith functional, and let $x_0 \in K$ and  $r>0$ be such that $\overline B(x_0,r)\subset \Omega$ and $\beta_K(x_0,r)\leq 1/10$. Let $S$ be the strip defined by 
$$S:=\{y \in  \overline{B}(x_0,r) \; : \; {\rm dist}(y,L(x_0,r))\leq r\beta(x_0,r) \}.$$
Then there exist a universal constant $C>0$, $\rho \in (r/2,r)$, and $v^\pm \in H^1(B(x_0,\rho);\R^2)$, such that $v^\pm = u \text{ on }\mathscr C^\pm$, $\mathscr C^\pm$ being the connected components of $\partial B(x_0,\rho)\setminus S$, and
$$\int_{B(x_0,\rho)}|e( v^\pm)|^2 \,dx \leq  C\int_{B(x_0,r)\setminus K}|e(u)|^2 \, dx.$$
\end{lemma}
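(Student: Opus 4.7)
The plan is to select a good radius $\rho\in(r/2,r)$ by an averaging argument and then apply the harmonic extension Lemma~\ref{extensionL} to produce $v^\pm$. Set up local coordinates so that $L(x_0,r)$ is the horizontal axis, and introduce the upper and lower annular regions
$$A^\pm := \left\{y \in B(x_0,r)\setminus \overline B(x_0,r/2) \;:\; \pm (y-x_0)\cdot e_2 > r\beta_K(x_0,r)\right\}.$$
By the optimality \eqref{optimal} of $L(x_0,r)$, one has $K\cap \overline B(x_0,r) \subset S$, hence $A^\pm \cap K = \emptyset$, so $u \in H^1(A^\pm;\R^2)$ by the smoothness of $u$ away from $K$. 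Moreover, because $\beta_K(x_0,r)\leq 1/10$, each $A^\pm$ is connected, and the rescaled sets $r^{-1}(A^\pm - x_0)$ form a uniformly Lipschitz family as $\beta_K(x_0,r)$ ranges over $[0,1/10]$.

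First I would apply the Poincar\'e-Korn inequality \eqref{poincare-korn} on each $A^\pm$: there exist rigid motions $R^\pm$ and a universal constant $C>0$ (by the scale/shape uniformity just noted) such that
$$\int_{A^\pm} |\nabla(u-R^\pm)|^2\,dx \leq C \int_{A^\pm}|e(u)|^2\,dx.$$
Working in polar coordinates $(s,\theta)$ centered at $x_0$, the identity $|\partial_\tau w|^2 = s^{-2}|\partial_\theta w|^2$ on $\partial B(x_0,s)$ combined with $|\nabla w|^2 \geq s^{-2}|\partial_\theta w|^2$ yields, by Fubini,
$$\int_{r/2}^{r} \int_{\mathscr C^\pm(s)}|\partial_\tau(u-R^\pm)|^2\, d\mathcal H^1\, ds \leq \int_{A^\pm}|\nabla(u-R^\pm)|^2\, dx,$$
where $\mathscr C^\pm(s)$ denotes the upper/lower arc of $\partial B(x_0,s)\setminus S$. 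Summing the $\pm$ contributions and applying Chebyshev, one finds $\rho\in(r/2,r)$ such that simultaneously
$$\int_{\mathscr C^\pm(\rho)}|\partial_\tau(u-R^\pm)|^2\, d\mathcal H^1 \leq \frac{C}{r}\int_{B(x_0,r)\setminus K}|e(u)|^2\, dx.$$

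Finally, each arc $\mathscr C^\pm(\rho)$ has the form $\mathscr C_\delta$ of Lemma~\ref{extensionL} inside $B(x_0,\rho)$, with parameter $\delta = r\beta_K(x_0,r)/\rho \leq 2\beta_K(x_0,r)\leq 1/5$. Applying that lemma to $u-R^\pm$ yields an extension $\tilde v^\pm\in H^1(B(x_0,\rho);\R^2)$ satisfying $\tilde v^\pm = u-R^\pm$ on $\mathscr C^\pm(\rho)$ and
$$\int_{B(x_0,\rho)}|\nabla \tilde v^\pm|^2\,dx \leq C\rho \int_{\mathscr C^\pm(\rho)}|\partial_\tau(u-R^\pm)|^2\, d\mathcal H^1.$$
Setting $v^\pm := \tilde v^\pm + R^\pm$ gives $v^\pm = u$ on $\mathscr C^\pm(\rho)$ and, since $R^\pm$ is rigid, $e(v^\pm) = e(\tilde v^\pm)$. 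Chaining the three displayed estimates (using $|e(\tilde v^\pm)|\leq |\nabla \tilde v^\pm|$ and $\rho\leq r$) produces the desired bound.

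The main technical obstacle is the uniform Korn constant on the varying family $A^\pm$; this is handled by rescaling to $r=1$ together with the uniform Lipschitz regularity of the rescaled domains for $\beta\in[0,1/10]$. A minor point is ensuring that a single $\rho$ works for both $\pm$, which is achieved by summing the two arc integrals before applying Chebyshev.
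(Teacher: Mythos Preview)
Your proof is correct and follows essentially the same approach as the paper: Korn on the regions above and below the strip to replace $e(u)$ by a full gradient, a Fubini/averaging argument to select a good radius $\rho\in(r/2,r)$, and then the harmonic extension Lemma~\ref{extensionL} applied to $u-R^\pm$. The only cosmetic difference is that the paper applies Korn on the full caps $A^\pm=$ connected components of $B(x_0,r)\setminus S$, whereas you restrict to the half-annuli $B(x_0,r)\setminus \overline B(x_0,r/2)$; both families are uniformly Lipschitz for $\beta\in[0,1/10]$, so this changes nothing.
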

 
\begin{proof}
Let $A^\pm$ be the connected components of $B(x_0,r)\setminus S$. Since $K\cap A^\pm=\emptyset$, by the Korn  inequality there exist two skew-symmetric matrices  $R^\pm$ such that the functions $x\mapsto u(x)-R^\pm x$ belong to $H^1(A^\pm;\R^2)$ and 
$$\int_{A^\pm} |\nabla u-R^\pm|^2 \,dx \leq C \int_{A^\pm} |e(u)|^2\,dx,$$
where the constant $C>0$ is universal since the domains $A^\pm$ are all uniformly Lipschitz for all possible values of $\beta(x_0,r)\leq 1/10$. Using the change of variables in polar coordinates, we infer that 
$$\int_{A^\pm} |\nabla u-R^\pm|^2 \, dx = \int_{0}^{r} \left(\int_{\partial B(x_0,\rho)\cap A^\pm} |\nabla u-R^\pm|^2 \, d\mathcal{H}^1\right) d\rho$$
which allows us to choose a radius $\rho \in (r/2,r)$ satisfying 

\begin{multline*}
\int_{\partial B(x_0,\rho)\cap A^+} |\nabla u-R^+|^2\,d\mathcal{H}^1+\int_{\partial B(x_0,\rho)\cap A^-} |\nabla u-R^-|^2\,d\mathcal{H}^1\\
\leq \frac{2}{r}\int_{A^+} |\nabla u-R^+|^2 \,dx + \frac{2}{r}\int_{A^-} |\nabla u-R^-|^2 \,dx\leq  \frac{C}{r}\int_{B(x_0,r) \setminus K} |e(u)|^2 \,dx\,.
\end{multline*}
Setting $\mathscr C^\pm:=\partial A^\pm \cap \partial B(x_0,\rho)$, in view of Lemma~\ref {extensionL} applied to the functions  $u^\pm:x \mapsto u(x)-R^\pm x$, which belong to
$H^1(\mathscr C^\pm;\R^2)$ since they are regular, for $\delta=r\beta(x_0,r)$ we get two functions $g^\pm \in H^1(B(x_0,\rho);\R^2)$ satisfying $g^\pm(x) = u(x)-R^\pm x$ for $\mathcal H^1$-a.e. $x \in \mathscr C^\pm$ and
$$\int_{B(x_0,\rho)}|\nabla g^\pm|^2 \,dx \leq C\rho\int_{\mathscr C^\pm}|\partial_\tau u^\pm|^2 \,d\mathcal H^1 \leq C\int_{B(x_0,r)\setminus K}|e(u)|^2 \, dx.$$
Finally, the functions $x \mapsto v^\pm(x):=g^\pm(x) +R^\pm x$ satisfy the required properties.
\end{proof}

We now use the extension to prove two density estimates, first in smaller balls, then in smaller rectangles.

\begin{proposition}[Density estimate in a ball]\label{prop1}  
Let $(u,K) \in \mathcal A(\Omega)$ be a minimizer of the Griffith functional, and let $x_0 \in K$ and  $r>0$ be such that $\overline B(x_0,r)\subset \Omega$ and $\beta_K(x_0,r)\leq 1/10$. Then there exist a universal constant $C>0$ and a radius $\rho \in (r/2,r)$ such that 
$$\int_{B(x_0,\rho)\setminus K} \mathbf Ae(u):e(u) \,dx+ \mathcal{H}^1(K\cap B(x_0,\rho))  \leq 2\rho + C\rho\big(\omega_u(x_0,r)+\beta_K(x_0,r)\big).$$
\end{proposition}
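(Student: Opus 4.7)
The plan is to test the minimality of $(u,K)$ against a comparison pair $(v,K')$ that replaces the crack, inside a well-chosen ball $B(x_0,\rho)$ with $\rho\in(r/2,r)$, by the diameter $L(x_0,r)\cap\overline B(x_0,\rho)$ of length $2\rho$, together with a short ``wall set'' on $\partial B(x_0,\rho)$ that is needed to avoid a jump of the displacement.

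First I would apply Lemma~\ref{extension} to obtain a radius $\rho\in(r/2,r)$ and two functions $v^\pm\in H^1(B(x_0,\rho);\R^2)$ with $v^\pm=u$ on the arcs $\mathscr C^\pm$ and
$$\int_{B(x_0,\rho)}|e(v^\pm)|^2\,dx\le C\int_{B(x_0,r)\setminus K}|e(u)|^2\,dx.$$
Since $K$ has locally finite $\mathcal H^1$-measure, a standard Fubini-type argument shows that the subset of $\rho\in(r/2,r)$ for which $\mathcal H^1(K\cap\partial B(x_0,\rho))=0$ has full measure, so $\rho$ may be chosen to enjoy this extra property as well.

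Next I construct the competitor. Set $\Sigma:=\partial B(x_0,\rho)\cap S$, which consists of two antipodal arcs; parametrizing $\partial B(x_0,\rho)$ in the local basis $(e_1,e_2)$ with $e_1$ tangent to $L(x_0,r)$ and using $\arcsin(t)\le 2t$ for $t\in[0,1/2]$ (as in \eqref{arcsin}), one checks $\mathcal H^1(\Sigma)\le Cr\beta_K(x_0,r)\le C'\rho\beta_K(x_0,r)$ for a universal $C'$. Define
$$K':=(K\setminus B(x_0,\rho))\cup(L(x_0,r)\cap\overline B(x_0,\rho))\cup\Sigma,$$
and let $v:=u$ on $\Omega'\setminus B(x_0,\rho)$, $v:=v^+$ on the upper half-disc of $B(x_0,\rho)\setminus K'$, and $v:=v^-$ on the lower one. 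Since $\partial B(x_0,\rho)\setminus K'=\mathscr C^+\cup\mathscr C^-$ and on each of these arcs the traces of $v$ from both sides coincide with $u$, there is no jump across $\mathscr C^\pm$; hence $v\in LD(\Omega'\setminus K')$, $v=\psi$ a.e.\ on $\Omega'\setminus\overline\Omega$, and $(v,K')\in\mathcal A(\Omega)$ is a legitimate competitor.

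Finally, minimality of $(u,K)$ against $(v,K')$, together with the cancellation of the contributions outside $B(x_0,\rho)$ and the fact that $\mathcal H^1(K\cap\partial B(x_0,\rho))=0$, yields
\begin{equation*}
\int_{B(x_0,\rho)\setminus K}\mathbf Ae(u){:}e(u)\,dx+\mathcal H^1(K\cap B(x_0,\rho))\le\int_{B(x_0,\rho)\setminus K'}\mathbf Ae(v){:}e(v)\,dx+\mathcal H^1(K'\cap\overline B(x_0,\rho)).
\end{equation*}
The length bound gives $\mathcal H^1(K'\cap\overline B(x_0,\rho))\le 2\rho+C\rho\beta_K(x_0,r)$, while the ellipticity of $\mathbf A$ on $\mathbb M^{2\times 2}_{\rm sym}$ (recall $\mu>0$ and $\lambda+\mu>0$) combined with the extension estimate bounds the energy of $v$ by $C\int_{B(x_0,r)\setminus K}|e(u)|^2\,dx\le C\,r\,\omega_u(x_0,r)\le 2C\rho\,\omega_u(x_0,r)$. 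Summing these two bounds delivers the claimed inequality. I expect the only mildly delicate step to be the admissibility check for $(v,K')$, namely the matching of traces on $\mathscr C^\pm$ ensuring $v\in LD(\Omega'\setminus K')$; the rest is direct bookkeeping on the length of $\Sigma$ and the extension estimate.
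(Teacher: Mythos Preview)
Your proof is correct and follows essentially the same approach as the paper: apply Lemma~\ref{extension} to get $\rho$ and $v^\pm$, replace $K$ inside $B(x_0,\rho)$ by the diameter $L(x_0,r)\cap\overline B(x_0,\rho)$ together with the wall set $\Sigma=\partial B(x_0,\rho)\cap S$, and compare energies. The only (harmless) additions are your extra request that $\mathcal H^1(K\cap\partial B(x_0,\rho))=0$ and the explicit mention of the ellipticity of $\mathbf A$; the paper simply works with $\overline B(x_0,\rho)$ throughout and absorbs the ellipticity constants into $C$.
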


\begin{proof} 
We keep using the same notation than that used in the proof of Lemma~\ref {extension}. Let $\rho \in (r/2,r)$ and $v^{\pm} \in H^1(B(x_0,\rho);\R^2)$ be given by the conclusion of Lemma~\ref {extension}. We now construct a competitor in $B(x_0,\rho)$ as follows. First, we consider a ``wall'' set $Z\subset \partial B(x_0,\rho)$ defined by 
$$Z:=\{y\in \partial B(x_0,\rho) \; : \; {\rm dist}(y,L(x_0,r))\leq r \beta(x_0,r)\}.$$
Note that $K\cap \partial B(x_0,\rho)\subset Z$,
$$\partial B(x_0,\rho)=[\partial A^+\cap \partial B(x_0,\rho)]\cup [\partial A^-\cap \partial B(x_0,\rho)] \cup Z=\mathscr C^+\cup \mathscr C^- \cup Z,$$
and that
$$\mathcal{H}^1(Z)=4\rho \arcsin\left( \frac{r\beta(x_0,r)}{\rho} \right)\leq 4r\beta(x_0,r).$$

We are now ready to define the competitor $(v,K')$ by setting
$$K':=\big[ K\setminus B(x_0,\rho ) \big]\cup Z \cup \big[L(x_0,r)\cap B(x_0,\rho)\big],$$
and, denoting by $V^\pm$ the connected components of $B(x_0,\rho)\setminus L(x_0,r)$ which intersect $A^\pm$,
$$
v:=
\begin{cases}
{v^\pm} & \text{ in } V^\pm \\
u & \text{ otherwise.}
\end{cases}
$$
Since $\mathcal{H}^1(K'\cap \overline B(x_0,\rho))\leq 2\rho+4r \beta(x_0,r)$, we deduce that
\begin{multline*}
\int_{B(x_0,\rho)\setminus K}\mathbf Ae(u):e(u) \, dx +\mathcal{H}^1(K\cap \overline B(x_0,\rho) )\\
\leq \int_{B(x_0,\rho)\setminus K} \mathbf Ae(v):e(v) \, dx +\mathcal{H}^1(K'\cap \overline B(x_0,\rho) ) \\
\leq  C\int_{B(x_0,r)\setminus K} |e(u)|^2 \, dx + \rho(2+C\beta(x_0,r))  \\
\leq 2\rho + C \rho\big(\omega(x_0,r)+\beta(x_0,r)\big),
\end{multline*}
and the proposition follows.
\end{proof}

%%%%%%%%%%%%%%%%%%%%%%%%%%

The following proposition is  similar to Proposition~\ref {prop1}, but here balls are replaced by rectangles. The assumption that $K$ separates $D^\pm(x_0,r)$ in $\overline B(x_0,r)$ is not crucial here and could be removed. We will keep it to simplify the proof of the proposition, since nothing changes for the purpose of proving Theorem \ref{mainTH}.

\begin{proposition}[Density estimates in a rectangle]\label{prop_para} 
Let $(u,K) \in \mathcal A(\Omega)$ be a minimizer of the Griffith functional, and let $x_0 \in K$ and $r>0$ be such that $\overline B(x_0,r)\subset \Omega$,  $\beta_K(x_0,r)\leq 1/10$, and $K$ separates $D^\pm(x_0,r)$ in $\overline B(x_0,r)$.  Let $\{e_1,e_2\}$ be an orthogonal system such that  $L(x_0,r)$ is directed by $e_1$. Then there exists a universal constant $C_*>0$, such that 
\begin{equation*}
\mathcal{H}^1\left({K} \cap \left\{y \in B(x_0,r): \; \frac{r}{5} \leq (y-x_0)_1 \leq \frac{2r}{5} \right\}\right)
\leq \frac{r}{5} + {C_* r\big(\beta_K(x_0,r)+\omega_u(x_0,r)\big)}.
\end{equation*}
\end{proposition}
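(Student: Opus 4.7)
The plan is to follow the strategy of Proposition~\ref{prop1} but with a rectangular competitor adapted to the vertical strip $\{r/5 \leq (y-x_0)_1 \leq 2r/5\}$. Work in coordinates centered at $x_0$ with $e_1$ along $L(x_0,r)$, and write $\beta := \beta_K(x_0,r)$ and $\omega := \omega_u(x_0,r)$, so that $K \cap B(x_0,r) \subset \{|y_2| \leq r\beta\}$. Fix the rectangle $U := [r/5, 2r/5] \times [-r/2, r/2]$, which lies in $B(x_0,r)$, and set $U^\pm := U \cap \{\pm y_2 > 0\}$.

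On the caps $A^\pm := B(x_0,r) \cap \{\pm y_2 > r\beta\}$, which are Lipschitz with a uniform Lipschitz character for $\beta \leq 1/10$, Korn's inequality yields skew-symmetric matrices $R^\pm$ and vectors $b^\pm$ such that $u^\pm := u - R^\pm y - b^\pm \in H^1(A^\pm;\R^2)$ with $\int_{A^\pm}|\nabla u^\pm|^2\, dy \leq C \int_{B(x_0,r)\setminus K} |e(u)|^2\, dy$. A Fubini averaging in $y_2$ on $(r\beta, 2r\beta)$, respectively $(-2r\beta, -r\beta)$, produces heights $\rho^{*,\pm}$ with $r\beta < |\rho^{*,\pm}| < 2r\beta$ such that
$$\int_{r/5}^{2r/5} |\partial_{y_1} u^\pm(y_1, \rho^{*,\pm})|^2\, dy_1 \leq \frac{C}{r\beta} \int_{A^\pm} |\nabla u^\pm|^2\, dy.$$
Declare the wall set $\Sigma := \{r/5, 2r/5\} \times [\rho^{*,-}, \rho^{*,+}]$ (with $\mathcal H^1(\Sigma) \leq 8 r\beta$), and the competitor crack inside $U$ by $K'_U := ([r/5, 2r/5] \times \{0\}) \cup \Sigma$, so that $\mathcal H^1(K'_U) \leq r/5 + 8 r\beta$.

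Define the competitor displacement on $U^\pm$ by $v^\pm(y) := u(y)$ whenever $\pm y_2 \geq |\rho^{*,\pm}|$, and in the thin inner strips by the interpolation
$$v^\pm(y) := u^\pm(y_1, \rho^{*,\pm}) + R^\pm y + b^\pm \qquad \text{for } 0 < \pm y_2 < |\rho^{*,\pm}|,$$
which matches $u$ continuously across $\{y_2 = \rho^{*,\pm}\}$. Extend $v := u$ outside $U$. Since $K \cap \partial U \subset \{|y_2| \leq r\beta\} \cap \partial U \subset \Sigma$, the pair $(v, K')$ with $K' := (K \setminus U) \cup K'_U$ is an admissible competitor and $v = u$ on $\partial U \setminus \Sigma$. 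The key technical point is the energy estimate on the thin strips: because $R^\pm$ is skew-symmetric, $e(v^\pm) = e(\tilde v^\pm)$ on the strip, where $\tilde v^\pm(y) := u^\pm(y_1, \rho^{*,\pm})$ depends only on $y_1$, hence
$$\int_{U^\pm \cap \{0 < \pm y_2 < |\rho^{*,\pm}|\}} |e(v^\pm)|^2\, dy \leq C\,|\rho^{*,\pm}| \int_{r/5}^{2r/5} |\partial_{y_1} u^\pm(y_1, \rho^{*,\pm})|^2\, dy_1 \leq C \int_{B(x_0,r)\setminus K} |e(u)|^2\, dy,$$
where the factor $|\rho^{*,\pm}| \sim r\beta$ exactly absorbs the $1/(r\beta)$ loss coming from the Fubini trace bound. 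Combined with $v = u$ on the remainder of $U$, this gives $\int_U \mathbf A e(v){:}e(v)\, dy \leq C r\,\omega$.

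Applying the minimality of $(u, K)$ against $(v, K')$ and cancelling the unchanged pieces outside $U$ yields
$$\mathcal H^1(K \cap \overline U) \leq \int_U \mathbf A e(v){:}e(v)\, dy + \mathcal H^1(K'_U) \leq \tfrac{r}{5} + C_* r\,(\beta + \omega).$$
Since $K$ lies in the flatness strip $\{|y_2| \leq r\beta\} \subset \{|y_2| \leq r/2\}$, one has $K \cap \{r/5 \leq (y-x_0)_1 \leq 2r/5\} \cap B(x_0,r) \subset K \cap \overline U$, which is the claimed bound. The main obstacle is producing a displacement competitor whose symmetric gradient is controlled by $\omega_u$ alone (rather than by a full gradient, which is unavailable for the Griffith functional); this is precisely why one must first rotate via the Korn field $R^\pm$ and interpolate using $u^\pm$ instead of $u$, so that the added rigid motion contributes zero to $e(v^\pm)$. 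Note that the separation hypothesis plays no role in this argument, consistent with the remark preceding the statement.
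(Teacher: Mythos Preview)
Your proof is correct but takes a genuinely different route from the paper's. The paper reuses the extension Lemma~\ref{extension} (harmonic extension from an arc of circle, itself based on Lemma~\ref{extensionL}): it selects a radius $\rho\in(r/2,r)$ by radial Fubini, replaces $K$ inside the thin rectangle $R$ by a segment, adds two wall sets (one $Z'$ on the vertical sides of $R$, one $Z$ on $\partial B(x_0,\rho)$), and then defines the competitor displacement on the whole ball by putting the harmonic extensions $v^\pm$ in the two connected components $V^\pm$ of $B(x_0,\rho)\setminus K'$. The separation hypothesis is used precisely to identify these components.

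By contrast, you work entirely inside the rectangle $U$, keep $v=u$ outside the thin strips $\{0<\pm y_2<|\rho^{*,\pm}|\}$, and fill the strips with the constant-in-$y_2$ profile $u^\pm(y_1,\rho^{*,\pm})+R^\pm y+b^\pm$. The point that the Korn rigid motion contributes nothing to $e(v^\pm)$, so that the strip energy is controlled by $|\rho^{*,\pm}|\cdot\|\partial_{y_1}u^\pm(\cdot,\rho^{*,\pm})\|_{L^2}^2$ and the factor $|\rho^{*,\pm}|\sim r\beta$ exactly cancels the $1/(r\beta)$ from the slice selection, is the key observation and is correct. Your construction is more elementary (no need for Lemma~\ref{extensionL} or Lemma~\ref{extension}), strictly local to $U$, and indeed does not use the separation hypothesis, in agreement with the remark preceding the proposition. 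The paper's approach has the advantage of reusing tools already in place for Proposition~\ref{prop1}; your approach buys a self-contained argument with no auxiliary extension lemma and a slightly stronger statement (separation-free).
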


 \begin{proof} 
We first apply Lemma~\ref {extension} to get a radius $\rho \in (r/2,r)$ and functions $v^{\pm} \in H^1(B(x_0,\rho);\R^2)$ which satisfy the conclusion of that result. In order to construct a competitor for $K$ in $\overline B(x_0,\rho)$, we would like to replace the set $K$ inside the rectangle 
$$R:=\{y \in \R^2 :\; (y-x_0)_1  \in [r/5,2r/5]  \; \text{ and }\; |(y-x_0)_2|\leq  r \beta(x,r)\},$$
by the segment $L(x_0,r)\cap R$ which has length exactly equal to $r/5$. Such a competitor may not separate the balls $D^\pm(x_0,\rho)$ in  $\overline B(x_0,\rho)$. If $D^\pm(x_0,\rho)$ belonged to the same connected component, we could only take $v^+$ (or $v^-$) as a competitor of $u$, introducing a big jump on the boundary of $B(x_0,\rho)$ and removing completely the jump on $K$.  To overcome this problem,  we consider a ``wall set'' (inside the vertical boundaries of $R$) 
$$Z':=\{y \in \R^2  : \;  (y-x_0)_1 \in \{r/5,2r/5\}  \; \text{ and }\; |(y-x_0)_2|\leq  r \beta(x,r)\},$$
as well as a second wall set on $\partial B(x_0,r)$ as before, defined by 
$$Z:=\{y\in \partial B(x_0,\rho) \; : \; {\rm dist}(y,L(x_0,r))\leq r \beta(x_0,r)\}.$$
We define
$$K':=\big[ K\cap \Omega \setminus R\big]\cup Z \cup Z' \cup  \big[L(x_0,r)\cap R\big].$$
Note that $K'$ is now separating the ball  $\overline B(x_0,\rho)$ (thanks to the wall set $Z')$ and 
$$\mathcal{H}^1(K'\cap B(x_0,\rho))\leq \frac{r}{5}+8r \beta(x_0,r)+\mathcal{H}^1(K\cap B(x_0,\rho)\setminus R).$$
 Now we define the competitor for the function $u$ in $\overline B(x_0,\rho)$. To this aim, using that $K'$ separates the ball $\overline B(x_0,\rho)$, we can find two  connected components $V^\pm$ of $B(x_0,\rho)\setminus K'$  whose closure intersect $\mathscr C^\pm$ and define 
$$
v:=
\begin{cases}
v^\pm & \text{ in } V^\pm \\
u & \text{otherwise}.
\end{cases}
$$
Let us recall that $u=v^\pm$ on $\partial B(x_0,\rho)\setminus Z$. Note that the presence of $Z$ in the singular set $K'$ is due to the fact that $v^\pm$ does not match $u$ on $Z$.
The pair $(v,K')$ is then a competitor for $(u,K)$ in $\overline B(x_0,\rho)$, and thus, 
\begin{eqnarray*}
\int_{B(x_0,\rho)\setminus K} \mathbf A e(u):e(u) \, dx &+&\mathcal{H}^1(K\cap   \overline B(x_0,\rho))\\
&\leq& \int_{B(x_0,\rho)\setminus K'} \mathbf A e(v):e(v) \, dx +\mathcal{H}^1(K'\cap   \overline B(x_0,\rho)) \\
&\leq&  C\int_{B(x_0,r)\setminus K} |e(u)|^2 \, dx + {\frac{r}{5}}+8r \beta(x_0,r) +\mathcal{H}^1(K\cap B(x_0,\rho)\setminus R) 
\end{eqnarray*}
from which we deduce that 
$$\mathcal{H}^1\left({K} \cap \left\{y \in B(x_0,r): \; \frac{r}{5} \leq (y-x_0)_1 \leq \frac{2r}{5} \right\}\right)\leq {\frac{r}{5}+Cr\left( \beta(x_0,r) +\omega(x_0,r)\right).}$$
which completes the proof of the result.
\end{proof}

%%%%%%%%%%%%%%%%%%%%%%%%%%%%%%%

An interesting consequence of the previous density estimates is a selection result of good radii, in a way that the corresponding spheres intersect the set $K$ at only two almost opposite points.

\begin{lemma}[Finding a good radius] \label{card2}
There exists a universal constant $\varepsilon_0>0$ such that the following property holds: let $(u,K) \in \mathcal A(\Omega)$ be a minimizer of the Griffith functional and let $x_0 \in K$ and $r>0$ be such that $\overline B(x_0,r)\subset \Omega$ and
$$\omega_u(x_0,r)+\beta_K(x_0,r) \leq \varepsilon_0.$$ 
If $K$ separates $D^\pm(x_0,r)$ in  $\overline B(x_0,r)$, then there exists  $s\in (r/8,r)$ such that $\#(K \cap \partial B(x_0,s))=2$.
\end{lemma}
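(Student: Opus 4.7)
The plan is to combine the sharp density bound of Proposition~\ref{prop1} with a coarea inequality applied to the radial function $y \mapsto |y-x_0|$, and to exploit the fact that $N(s) := \#(K \cap \partial B(x_0,s))$ is integer-valued.

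First, I would establish the pointwise lower bound $N(s) \geq 2$ for every $s \in (r/8, r)$. Choosing $\varepsilon_0 < 1/128$, Lemma~\ref{topological1} applied with $\tau = \varepsilon_0$ propagates both $\beta_K(x_0, s) \leq 1/2$ and the separation of $D^\pm(x_0, s)$ in $\overline B(x_0,s)$ from scale $r$ down to every scale $s \in (16\varepsilon_0 r, r) \supset (r/8, r)$. Lemma~\ref{existGamma} then produces, at each such scale, an injective Lipschitz curve inside $K \cap \overline B(x_0, s)$ whose two distinct endpoints lie on $\partial B(x_0, s)$, which forces $N(s) \geq 2$. (Alternatively, if $\#(K \cap \partial B(x_0,s)) \leq 1$, then $\partial B(x_0,s) \setminus K$ is arcwise connected and provides a path in $\overline B(x_0,s) \setminus K$ joining $D^+(x_0,s)$ to $D^-(x_0,s)$, contradicting separation.)

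Next, I would use Proposition~\ref{prop1} to select a radius $\rho \in (r/2, r)$ at which the density of $K$ is almost sharp,
$$\mathcal{H}^1(K \cap B(x_0,\rho)) \;\leq\; 2\rho + C\rho \bigl(\omega_u(x_0,r) + \beta_K(x_0,r)\bigr) \;\leq\; 2\rho + C\rho \varepsilon_0,$$
and couple this with Eilenberg's coarea inequality applied to the $1$-Lipschitz function $y \mapsto |y-x_0|$ restricted to the $\mathcal{H}^1$-rectifiable set $K$:
$$\int_0^{\rho} N(s)\, ds \;\leq\; \mathcal{H}^1(K \cap B(x_0,\rho)) \;\leq\; 2\rho + C\rho \varepsilon_0.$$
Discarding the nonnegative integral over $(0, r/8)$ and subtracting $2(\rho - r/8)$ yields
$$\int_{r/8}^{\rho} \bigl(N(s) - 2\bigr)\, ds \;\leq\; \tfrac{r}{4} + C\rho\varepsilon_0.$$

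Finally, the interval $(r/8, \rho)$ has length at least $3r/8$ since $\rho \geq r/2$; provided $\varepsilon_0$ is chosen so small that $\tfrac{r}{4} + C\rho\varepsilon_0 < 3r/8$ (equivalently $C\varepsilon_0 < 1/8$), the average of the nonnegative integer-valued function $N(s) - 2$ over $(r/8, \rho)$ is strictly below $1$. Hence the set $\{s \in (r/8, \rho) : N(s) = 2\}$ has positive measure, which furnishes the required radius $s \in (r/8, r)$. The main (mild) obstacle of the argument lies in the sharpness of the constant $2$ in Proposition~\ref{prop1}: had we only a bound of the form $C r$ with $C > 2$, the integrated excess could not be forced below the length of the interval, and the integer-averaging trick would break down.
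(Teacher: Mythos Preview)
Your proof is correct and follows essentially the same route as the paper: apply Proposition~\ref{prop1} to get $\mathcal H^1(K\cap B(x_0,\rho))\leq 2\rho+C\rho\varepsilon_0$, feed this into the coarea inequality for $N(s)=\#(K\cap\partial B(x_0,s))$, use separation to force $N(s)\geq 2$ on the relevant sub-interval, and conclude by the integer-averaging trick. The only cosmetic differences are that the paper works on the interval $(\rho/4,\rho)$ rather than $(r/8,\rho)$, and obtains $N(s)\geq 2$ directly from the strip geometry at scale $r$ (the circle $\partial B(x_0,s)$ is not contained in the $\beta r$-strip, so separation in $\overline B(x_0,r)$ already forces two crossings) rather than first propagating separation to scale $s$ via Lemma~\ref{topological1}.
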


\begin{proof} According to Proposition~\ref {prop1}, there exist a universal constant $C>0$ and a radius $\rho \in (r/2,r)$ such that 
$$\int_{B(x_0,\rho)\setminus K} \mathbf Ae(u):e(u) \,dx+ \mathcal{H}^1(K\cap B(x_0,\rho))  \leq 2\rho + C\rho\big(\omega(x_0,r)+\beta(x_0,r)\big).$$
We now fix 
\begin{equation}\label{eps1}
\varepsilon_0:=\min\left\{{\frac{1}{8C}}\;,\; \frac{1}{10}\right\}.
\end{equation}
Using  formula \eqref{coaire1} in Lemma~\ref {coaire1L} and \eqref{eps1}, we get the estimate 
$$\int_{0}^{\rho}\#(K \cap \partial B(x_0,s))\, ds \leq \mathcal{H}^1(K\cap B(x_0,\rho)) \leq \left(2+\frac{1}{8}\right)\rho.$$
Moreover, thanks to the fact that $r\beta(x_0,r)\leq \frac{1}{10}r\leq \frac{1}{5}\rho<\frac{1}{4}\rho$, we have that for all $s \in (\frac{\rho}{4},\rho)$ the circle $\partial B(x_0,s)$ is not totally contained in the strip $\{x \in \overline B(x_0,r): \; \text{dist}(x,L(x_0,r))\leq \beta(x_0,r) r\}$. Therefore, since $K$ is assumed to separate $D^\pm(x_0,r)$ in   $\overline B(x_0,r)$, we deduce that for all $s \in (\frac{\rho}{4},\rho)$
$$\#(K \cap \partial B(x_0,s)) \geq 2.$$
Setting 
$$A:=\{s\in (\rho/4,\rho) \; : \; \#(K\cap \partial B(x_0,s)) \geq 3 \},$$
we obtain
\begin{eqnarray*}
3\mathcal L^1(A) +2\mathcal{L}^1([\rho/4,\rho]\setminus A)& \leq & \int_{\rho/4}^\rho\#({K} \cap \partial B(x_0,s))   ds \\
& \leq & \left(2+\frac{1}{8}\right)\rho,
\end{eqnarray*}
and finally,
$$\mathcal L^1(A)\leq \left(2+\frac18-2\frac{3}{4}\right)\rho=\frac{5\rho}{8}<\frac{3\rho}{4}=\mathcal L^1((\rho/4,\rho)).$$
We then deduce the existence of some $s\in(\rho /4,\rho) \setminus A$, which thus satisfies $\#({K} \cap \partial B(x_0,s)) =2$. Since $\rho \in (r/2,r)$, this radius $s$ then belongs to $(r/8,r)$.
\end{proof}

%%%%%%%%%%%%%%%%%%%%%%%%%%

\subsection{The main extension result}

%%%%%%%%%%%%%%%%%%%%%%%%%%%%%%%

\label{extensionMain}

The first rough density estimate given by Proposition~\ref {prop1} is based on the property that the crack is always contained in a small strip of thickness $r \beta(x_0,r)$. This enables one to construct a competitor outside a wall set with height of order $r \beta(x_0,r)$. However, in order to bootstrap the estimates on our main quantities, $\beta$ and $\omega$, we need to slightly improve such a density estimate obtaining a remainder of order $r  \eta$, $\eta$ well chosen (of order $\omega(x_0,r)^{1/7}$), instead of $r\beta(x_0,r)$. To this aim, we need a refined version of the extension Lemma~\ref {extension}, in which the boundary value of the competitor displacement is prescribed outside a wall set of height $r \eta$, instead of $r\beta(x_0,r)$. To construct such a suitable small wall set, we first find a nice region in the annulus $B(x_0,\frac{2r}{5})\setminus B(x_0,\frac{r}{5})$ where to cut, i.e. we find some little  boxes in which the set $K$ is totally trapped. This is the purpose of the following lemma. Notice that in all this subsection and in the next one we never use any connectedness assumption on $K$,  but we rather use a separating assumption only.

%%%%%%%
%%%%%%%

\begin{lemma}[Selection of cutting squares]  \label{bow-tie}
Let $(u,K) \in \mathcal A(\Omega)$ be a minimizer of the Griffith functional, and let $x_0 \in K$ and  $r>0$ be such that $\overline B(x_0,r)\subset \Omega$ and
$$\omega_u(x_0,r)+\beta_K(x_0,r)  \leq\frac{1}{5{C_*}}\min\left(1, 10^{-2}\theta_0\right),$$ 
where $\theta_0>0$ is the Ahlfors regularity constant of $K$, and $C_*>0$ is the universal constant given in Proposition~\ref {prop_para}. We also assume that $K$ separates $D^\pm(x_0,r)$ in   $\overline B(x_0,r)$.
Let $\{e_1,e_2\}$ be an orthogonal system such that  $L(x_0,r)$ is directed by $e_1$. Then for every $\eta\in(0,10^{-2})$ there exist two points $y_0$ and $z_0$ $\in \R^2$ such that 
\begin{eqnarray}
(y_0-x_0)_1\in (r/5,2r/5),&&  \quad (z_0-x_0)_1\in (-2r/5,-r/5), \label{AS1} \\
|(y_0-x_0)_2|\leq \beta(x_0,r)r,&&  \quad |(z_0-x_0)_2|\leq \beta(x_0,r)r,\label{AS2}
\end{eqnarray}
and
\begin{eqnarray}
& K \cap \{ y \in \R^2: \; |(y-y_0)_1|\leq \eta r \}  \subset  \{y \in \R^2  : \; |(y-y_0)_2|\leq 30 \eta r\},& \label{AS3} \\
& K \cap \{ y \in \R^2: \; |(y-z_0)_1|\leq \eta r \}  \subset  \{y \in \R^2  : \; |(y-z_0)_2|\leq 30\eta r\}.& \label{AS4}
\end{eqnarray}
\end{lemma}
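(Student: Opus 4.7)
The strategy is to locate $y_0$ inside the right rectangle $R^+ := \{y \in \overline B(x_0,r) : (y-x_0)_1 \in [r/5, 2r/5]\}$ by combining the sharp density estimate of Proposition~\ref{prop_para} with a counting argument on narrow vertical sub-strips; the point $z_0$ is obtained by the identical argument in the symmetric left rectangle.

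First I would apply Proposition~\ref{prop_para} to get
\[
\mathcal H^1(K \cap R^+) \leq \frac{r}{5} + C_* r \bigl(\beta_K(x_0,r) + \omega_u(x_0,r)\bigr),
\]
so that the length of $K$ in $R^+$ barely exceeds the length $r/5$ of a horizontal segment spanning $R^+$. Next, I would partition $[r/5,2r/5]$ into $N := \lfloor 1/(10\eta) \rfloor$ disjoint sub-intervals of width $2\eta r$ centered at $a_1,\dots,a_N$, and set $V_k := \{y : (y-x_0)_1 \in [a_k - \eta r,\, a_k + \eta r]\}$. Since $K$ separates $D^{\pm}(x_0,r)$ in $\overline B(x_0,r)$, Lemma~\ref{existGamma} supplies an injective Lipschitz arc $\Gamma \subset K \cap \overline B(x_0,r)$ that still separates; topology then forces $\Gamma$ to cross each $V_k$ from its left vertical side to its right vertical side, contributing at least $2\eta r$ to the length of $\Gamma$ inside $V_k$.

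Declare $V_k$ \emph{good} if the vertical oscillation of $K \cap V_k \cap \overline B(x_0,r)$ does not exceed $60\eta r$, and \emph{bad} otherwise. On a bad strip, any pair of points of $K \cap V_k \cap \overline B(x_0,r)$ realizing the oscillation lies at vertical separation greater than $60\eta r$, so applying the Ahlfors lower bound \eqref{ahlfors} at each of them through small balls of radius of order $\eta r$ (fitted inside $V_k$, mutually disjoint and disjoint from the crossing arc of $\Gamma$) yields an extra length of order $\theta_0 \eta r$ on top of the $\Gamma$-crossing length $2\eta r$. Summing over the disjoint strips and denoting by $B$ the number of bad strips,
\[
2\eta r\, N + c\,\theta_0 \eta r\, B \;\leq\; \mathcal H^1(K \cap R^+) \;\leq\; \frac{r}{5} + C_* r(\beta + \omega),
\]
for an absolute $c>0$. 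Since $2\eta r N \leq r/5$, this collapses to $B \leq C_*(\beta+\omega)/(c\theta_0\eta)$, and the calibrated smallness assumption $\beta+\omega \leq \min(1,10^{-2}\theta_0)/(5C_*)$ combined with $\eta<10^{-2}$ forces $B < N$, so a good strip $V_{k_0}$ exists.

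Setting $m_0 := \inf\{y_2 : y \in K \cap V_{k_0} \cap \overline B(x_0,r)\}$ and $M_0 := \sup\{y_2 : y \in K \cap V_{k_0} \cap \overline B(x_0,r)\}$, I would take $y_0 := x_0 + a_{k_0} e_1 + \tfrac{1}{2}(m_0+M_0) e_2$. Then $(y_0-x_0)_1 = a_{k_0} \in (r/5,2r/5)$ by construction, and $|m_0|,|M_0| \leq \beta_K(x_0,r) r$ by flatness gives $|(y_0-x_0)_2| \leq \beta_K(x_0,r)r$; finally $M_0-m_0 \leq 60\eta r$ gives $K \cap \{|(y-y_0)_1| \leq \eta r\} \cap \overline B(x_0,r) \subset \{|(y-y_0)_2| \leq 30\eta r\}$, which is \eqref{AS3}. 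The symmetric argument in the left rectangle yields $z_0$. The main technical obstacle is the quantified length contribution on bad strips: one must fit the Ahlfors balls inside $V_k$ (possibly after slightly thinning the partition to avoid overlap with the neighboring $V_{k\pm1}$) and verify they lie vertically away from the $\Gamma$-crossing arc; the factor $\theta_0$ in the smallness hypothesis is precisely calibrated for this counting to force $B<N$.
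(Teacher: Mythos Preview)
Your proposal uses the same ingredients as the paper --- the rectangle density estimate of Proposition~\ref{prop_para}, a separating curve, and Ahlfors regularity --- but assembles them in a different order, and that different order creates a genuine gap precisely at the step you flag as the main obstacle.

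The paper first applies a plain pigeonhole on the lengths $\mathcal H^1(K\cap S_k)$ over the $N$ sub-strips to isolate \emph{one} strip $S_{k_0}$ with $\mathcal H^1(K\cap S_{k_0})\leq (1+E)r/(5N)$, where $E:=5C_*(\beta+\omega)$. Only then does it work locally inside $S_{k_0}$: a crossing arc $\Gamma\subset K\cap S_{k_0}$ from side to side has length between $r/(5N)$ and $(1+E)r/(5N)$, hence diameter $\leq 2\eta r$, and $\mathcal H^1(K\cap S_{k_0}\setminus\Gamma)\leq E\eta r$. Any point of $K$ in the slightly shrunken strip $S'$ at distance $>\delta r:=2E\eta r/\theta_0$ from $\Gamma$ would carry an Ahlfors ball $B(y,\delta r)\subset S_{k_0}\setminus\Gamma$ of mass $>E\eta r$, a contradiction; so $K\cap S'$ lies within $3\eta r$ of the endpoint $z$ of $\Gamma$, and $y_0$ is taken as the midpoint of the horizontal segment through $z$.

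Your global counting of bad strips requires, on each bad $V_k$, a quantified extra length beyond the $2\eta r$ crossing contribution. But the two extreme points $p,q$ realizing the vertical oscillation may both lie on (or arbitrarily close to) the crossing arc itself --- nothing forces that arc to be flat --- so your Ahlfors balls need not be disjoint from it. One can try a dichotomy (either the crossing arc in $V_k$ already has oscillation $>58\eta r$ and hence length $>58\eta r$, or some point of $K\cap V_k$ lies at distance $>\eta r$ from it), but in the second branch the Ahlfors ball of radius $\sim\eta r$ generically spills into $V_{k\pm1}$, where it may intersect the neighboring crossing arcs, so the ``extra'' mass may be double-counted as crossing length in an adjacent strip. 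Thinning the partition to leave gaps between the $V_k$ then breaks the base inequality $2\eta r N\leq r/5$ on which your main estimate rests. These issues are probably patchable with further bookkeeping, but your sketch does not supply it; the paper's pigeonhole-first order sidesteps them entirely by reducing to a single strip before Ahlfors regularity is ever invoked.
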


\begin{proof}  It is enough to prove the existence of a point $y_0$ since the argument leading to the existence of $z_0$ is similar. For simplicity, we will denote by $\beta:=\beta(x_0,r)$, $\omega:=\omega(x_0,r)$.

\medskip

We start by finding a good vertical strip in which $K$ has small length.  Let us define the vertical strip
$$S:=\left\{y \in B(x_0,r) : \; \frac{r}{5} \leq (y-x_0)_1\leq \frac{2r}{5}\right\}.$$
Let $\eta<1/10$ and let $N\in \mathbb{N}$, $N\geq 2$, be such that $\frac{1}{5N} \leq   \eta < \frac{1}{5N-5}$. Then $(N-1)/N\geq \frac{1}{2}$ and
\begin{equation}
\frac{\eta}{2}\leq \frac{1}{5N}\leq \eta. \label{etabound}
\end{equation}

We split $S$ into the pairwise disjoint union of $N$ smaller sets $S_1,\ldots,S_N$ defined, for all $k \in \{1,\ldots,N\}$, by
$$S_k =: \left\{y \in S \; : \;  \frac{r}{5} +\frac{k-1}{5N}r \leq (y-x_0)_1 <  \frac{r}{5}+ \frac{k}{5N}r \right\}.$$
Since $\beta\leq 1/10$ we can apply  Proposition~\ref {prop_para} which implies

\begin{equation}
\sum_{k=1}^N \mathcal{H}^1( K \cap S_k) \leq  \mathcal{H}^1( K \cap S)\leq  \frac{(1+E)r}{5}, \label{bigSum} 
\end{equation}
with $E:=5C_*(\beta+\omega)$, where we recall that $\theta_0$ is the Ahlfors regularity constant of $K$, and $C_*>0$ is the universal constant given in Proposition~\ref {prop_para}. As it will be used later, we notice that under our assumptions we have in particular that 
\begin{equation}
E\leq \min(1, 10^{-2}\theta_0  ). \label{EboundD}
\end{equation}

From \eqref{bigSum} we deduce the existence of $k_0 \in \{1,\ldots,N\}$ such that    (see Figure \ref{fig1})
\begin{equation}
\mathcal{H}^1(  K\cap S_{k_0}) \leq\frac{(1+E)r}{5N}.\label{estimateNN}
\end{equation}

	%%%%%%%%%%%%%%%%%
\begin{figure}[!ht]
\begin{center}
\scalebox{1} % Change this value to rescale the drawing.
{
\begin{pspicture}(0,-5.215)(11.641894,5.215)
\definecolor{color1510}{rgb}{0.6,0.6,0.6}
\definecolor{color1514}{rgb}{0.4,0.4,1.0}
\definecolor{color1526}{rgb}{0.4,0.4,0.4}
\pscircle[linewidth=0.04,dimen=outer](5.54,0.0){5.0}
\usefont{T1}{ptm}{m}{n}
\rput(8.921455,4.545){$B(x_0,r)$}
\psline[linewidth=0.03cm,linecolor=color1510](6.54,-0.2)(6.54,0.4)
\psline[linewidth=0.03cm](6.54,5.2)(6.54,-5.2)
\psline[linewidth=0.03cm](7.54,5.2)(7.54,-5.2)
\psline[linewidth=0.03cm,linecolor=color1510](0.34,0.0)(10.74,0.0)
\pscustom[linewidth=0.04,linecolor=color1514]
{
\newpath
\moveto(0.0,0.12)
\lineto(0.05,0.13)
\curveto(0.075,0.135)(0.145,0.155)(0.19,0.17)
\curveto(0.235,0.185)(0.325,0.205)(0.37,0.21)
\curveto(0.415,0.215)(0.485,0.225)(0.51,0.23)
\curveto(0.535,0.235)(0.585,0.24)(0.61,0.24)
\curveto(0.635,0.24)(0.685,0.24)(0.71,0.24)
\curveto(0.735,0.24)(0.785,0.23)(0.81,0.22)
\curveto(0.835,0.21)(0.88,0.19)(0.9,0.18)
\curveto(0.92,0.17)(0.97,0.145)(1.0,0.13)
\curveto(1.03,0.115)(1.085,0.085)(1.11,0.07)
\curveto(1.135,0.055)(1.19,0.03)(1.22,0.02)
\curveto(1.25,0.01)(1.295,-0.01)(1.31,-0.02)
\curveto(1.325,-0.03)(1.36,-0.045)(1.38,-0.05)
\curveto(1.4,-0.055)(1.445,-0.065)(1.47,-0.07)
\curveto(1.495,-0.075)(1.545,-0.09)(1.57,-0.1)
\curveto(1.595,-0.11)(1.645,-0.125)(1.67,-0.13)
\curveto(1.695,-0.135)(1.745,-0.15)(1.77,-0.16)
\curveto(1.795,-0.17)(1.85,-0.185)(1.88,-0.19)
\curveto(1.91,-0.195)(1.965,-0.21)(1.99,-0.22)
\curveto(2.015,-0.23)(2.065,-0.245)(2.09,-0.25)
\curveto(2.115,-0.255)(2.165,-0.26)(2.19,-0.26)
\curveto(2.215,-0.26)(2.26,-0.265)(2.28,-0.27)
\curveto(2.3,-0.275)(2.345,-0.28)(2.37,-0.28)
\curveto(2.395,-0.28)(2.45,-0.27)(2.48,-0.26)
\curveto(2.51,-0.25)(2.57,-0.235)(2.6,-0.23)
\curveto(2.63,-0.225)(2.685,-0.21)(2.71,-0.2)
\curveto(2.735,-0.19)(2.79,-0.175)(2.82,-0.17)
\curveto(2.85,-0.165)(2.905,-0.15)(2.93,-0.14)
\curveto(2.955,-0.13)(3.0,-0.11)(3.02,-0.1)
\curveto(3.04,-0.09)(3.095,-0.07)(3.13,-0.06)
\curveto(3.165,-0.05)(3.22,-0.03)(3.24,-0.02)
\curveto(3.26,-0.01)(3.305,0.005)(3.33,0.01)
\curveto(3.355,0.015)(3.405,0.03)(3.43,0.04)
\curveto(3.455,0.05)(3.5,0.065)(3.52,0.07)
\curveto(3.54,0.075)(3.59,0.085)(3.62,0.09)
\curveto(3.65,0.095)(3.705,0.11)(3.73,0.12)
\curveto(3.755,0.13)(3.8,0.15)(3.82,0.16)
\curveto(3.84,0.17)(3.885,0.185)(3.91,0.19)
\curveto(3.935,0.195)(3.985,0.21)(4.01,0.22)
\curveto(4.035,0.23)(4.07,0.255)(4.08,0.27)
\curveto(4.09,0.285)(4.105,0.315)(4.12,0.36)
}
\pscustom[linewidth=0.04,linecolor=color1514]
{
\newpath
\moveto(2.28,-0.28)
\lineto(2.23,-0.22)
\curveto(2.205,-0.19)(2.16,-0.14)(2.14,-0.12)
\curveto(2.12,-0.1)(2.09,-0.065)(2.08,-0.05)
\curveto(2.07,-0.035)(2.06,0.015)(2.06,0.05)
\curveto(2.06,0.085)(2.06,0.155)(2.06,0.19)
\curveto(2.06,0.225)(2.045,0.275)(2.03,0.29)
\curveto(2.015,0.305)(1.985,0.325)(1.94,0.34)
}
\pscustom[linewidth=0.04,linecolor=color1514]
{
\newpath
\moveto(2.06,0.1)
\lineto(2.1,0.12)
\curveto(2.12,0.13)(2.17,0.15)(2.2,0.16)
\curveto(2.23,0.17)(2.28,0.185)(2.3,0.19)
\curveto(2.32,0.195)(2.38,0.2)(2.42,0.2)
\curveto(2.46,0.2)(2.525,0.205)(2.55,0.21)
\curveto(2.575,0.215)(2.615,0.225)(2.66,0.24)
}
\pscustom[linewidth=0.04,linecolor=color1514]
{
\newpath
\moveto(3.8,0.14)
\lineto(3.86,0.15)
\curveto(3.89,0.155)(3.995,0.16)(4.07,0.16)
\curveto(4.145,0.16)(4.29,0.145)(4.36,0.13)
\curveto(4.43,0.115)(4.53,0.075)(4.56,0.05)
\curveto(4.59,0.025)(4.665,-0.035)(4.71,-0.07)
\curveto(4.755,-0.105)(4.845,-0.175)(4.89,-0.21)
\curveto(4.935,-0.245)(5.0,-0.295)(5.02,-0.31)
\curveto(5.04,-0.325)(5.085,-0.345)(5.11,-0.35)
\curveto(5.135,-0.355)(5.18,-0.37)(5.2,-0.38)
\curveto(5.22,-0.39)(5.265,-0.405)(5.29,-0.41)
\curveto(5.315,-0.415)(5.365,-0.42)(5.39,-0.42)
\curveto(5.415,-0.42)(5.46,-0.415)(5.48,-0.41)
\curveto(5.5,-0.405)(5.545,-0.4)(5.57,-0.4)
\curveto(5.595,-0.4)(5.645,-0.4)(5.67,-0.4)
\curveto(5.695,-0.4)(5.745,-0.395)(5.77,-0.39)
\curveto(5.795,-0.385)(5.85,-0.375)(5.88,-0.37)
\curveto(5.91,-0.365)(5.96,-0.355)(5.98,-0.35)
\curveto(6.0,-0.345)(6.04,-0.335)(6.06,-0.33)
\curveto(6.08,-0.325)(6.115,-0.31)(6.13,-0.3)
\curveto(6.145,-0.29)(6.185,-0.26)(6.21,-0.24)
\curveto(6.235,-0.22)(6.3,-0.19)(6.34,-0.18)
\curveto(6.38,-0.17)(6.455,-0.145)(6.49,-0.13)
\curveto(6.525,-0.115)(6.575,-0.075)(6.59,-0.05)
\curveto(6.605,-0.025)(6.64,0.01)(6.66,0.02)
\curveto(6.68,0.03)(6.72,0.05)(6.74,0.06)
\curveto(6.76,0.07)(6.8,0.095)(6.82,0.11)
\curveto(6.84,0.125)(6.88,0.145)(6.9,0.15)
\curveto(6.92,0.155)(6.965,0.16)(6.99,0.16)
\curveto(7.015,0.16)(7.07,0.16)(7.1,0.16)
\curveto(7.13,0.16)(7.195,0.16)(7.23,0.16)
\curveto(7.265,0.16)(7.335,0.155)(7.37,0.15)
\curveto(7.405,0.145)(7.465,0.14)(7.49,0.14)
\curveto(7.515,0.14)(7.565,0.14)(7.59,0.14)
\curveto(7.615,0.14)(7.665,0.135)(7.69,0.13)
\curveto(7.715,0.125)(7.765,0.11)(7.79,0.1)
\curveto(7.815,0.09)(7.88,0.07)(7.92,0.06)
\curveto(7.96,0.05)(8.02,0.03)(8.04,0.02)
\curveto(8.06,0.01)(8.105,-0.005)(8.13,-0.01)
\curveto(8.155,-0.015)(8.2,-0.025)(8.22,-0.03)
\curveto(8.24,-0.035)(8.285,-0.045)(8.31,-0.05)
\curveto(8.335,-0.055)(8.395,-0.06)(8.43,-0.06)
\curveto(8.465,-0.06)(8.53,-0.06)(8.56,-0.06)
\curveto(8.59,-0.06)(8.645,-0.065)(8.67,-0.07)
\curveto(8.695,-0.075)(8.745,-0.085)(8.77,-0.09)
\curveto(8.795,-0.095)(8.84,-0.105)(8.86,-0.11)
\curveto(8.88,-0.115)(8.93,-0.13)(8.96,-0.14)
\curveto(8.99,-0.15)(9.06,-0.18)(9.1,-0.2)
\curveto(9.14,-0.22)(9.2,-0.25)(9.22,-0.26)
\curveto(9.24,-0.27)(9.285,-0.28)(9.31,-0.28)
\curveto(9.335,-0.28)(9.395,-0.28)(9.43,-0.28)
\curveto(9.465,-0.28)(9.535,-0.28)(9.57,-0.28)
\curveto(9.605,-0.28)(9.665,-0.275)(9.69,-0.27)
\curveto(9.715,-0.265)(9.765,-0.255)(9.79,-0.25)
\curveto(9.815,-0.245)(9.865,-0.235)(9.89,-0.23)
}
\pscustom[linewidth=0.04,linecolor=color1514]
{
\newpath
\moveto(8.66,-0.08)
\lineto(8.69,-0.03)
\curveto(8.705,-0.005)(8.735,0.055)(8.75,0.09)
\curveto(8.765,0.125)(8.775,0.185)(8.77,0.21)
\curveto(8.765,0.235)(8.745,0.285)(8.73,0.31)
\curveto(8.715,0.335)(8.68,0.37)(8.66,0.38)
\curveto(8.64,0.39)(8.6,0.41)(8.58,0.42)
\curveto(8.56,0.43)(8.53,0.445)(8.5,0.46)
}
\pscustom[linewidth=0.04,linecolor=color1514]
{
\newpath
\moveto(8.76,0.2)
\lineto(8.79,0.23)
\curveto(8.805,0.245)(8.84,0.28)(8.86,0.3)
\curveto(8.88,0.32)(8.915,0.355)(8.93,0.37)
\curveto(8.945,0.385)(8.985,0.4)(9.01,0.4)
\curveto(9.035,0.4)(9.095,0.4)(9.13,0.4)
\curveto(9.165,0.4)(9.23,0.4)(9.26,0.4)
\curveto(9.29,0.4)(9.35,0.4)(9.38,0.4)
\curveto(9.41,0.4)(9.505,0.4)(9.57,0.4)
\curveto(9.635,0.4)(9.79,0.4)(9.88,0.4)
\curveto(9.97,0.4)(10.115,0.4)(10.17,0.4)
\curveto(10.225,0.4)(10.305,0.4)(10.33,0.4)
\curveto(10.355,0.4)(10.405,0.395)(10.43,0.39)
\curveto(10.455,0.385)(10.505,0.375)(10.53,0.37)
\curveto(10.555,0.365)(10.6,0.355)(10.62,0.35)
\curveto(10.64,0.345)(10.68,0.335)(10.7,0.33)
\curveto(10.72,0.325)(10.76,0.31)(10.78,0.3)
\curveto(10.8,0.29)(10.84,0.275)(10.86,0.27)
\curveto(10.88,0.265)(10.925,0.26)(10.95,0.26)
\curveto(10.975,0.26)(11.015,0.255)(11.06,0.24)
}
\usefont{T1}{ptm}{m}{n}
\rput(11.291455,0.585){$K$}
\usefont{T1}{ptm}{m}{n}
\rput(7.0614552,3.765){$S$}
\psline[linewidth=0.03cm,linecolor=color1526](6.74,1.2)(6.74,-1.4)
\psline[linewidth=0.03cm,linecolor=color1526](6.94,1.2)(6.94,-1.4)
\psline[linewidth=0.03cm,linecolor=color1526](7.14,1.2)(7.14,-1.4)
\psline[linewidth=0.03cm,linecolor=color1526](7.34,1.2)(7.34,-1.4)
\psline[linewidth=0.03cm,linecolor=color1510,arrowsize=0.05291667cm 2.0,arrowlength=1.4,arrowinset=0.4]{<-}(7.06,0.84)(4.74,2.2)
\usefont{T1}{ptm}{m}{n}
\rput(4.161455,2.525){$S_{k_0}$}
\psdots[dotsize=0.12,linecolor=color1510](5.54,0.0)
\psdots[dotsize=0.12,linecolor=color1510](5.54,0.0)
\psdots[dotsize=0.24](5.54,0.0)
\usefont{T1}{ptm}{m}{n}
\rput(5.2514553,0.465){$x_0$}
\pscustom[linewidth=0.04,linecolor=color1514]
{
\newpath
\moveto(3.22,0.32)
\lineto(3.26,0.35)
\curveto(3.28,0.365)(3.32,0.385)(3.34,0.39)
\curveto(3.36,0.395)(3.395,0.41)(3.41,0.42)
\curveto(3.425,0.43)(3.465,0.44)(3.49,0.44)
\curveto(3.515,0.44)(3.565,0.44)(3.59,0.44)
\curveto(3.615,0.44)(3.675,0.44)(3.71,0.44)
\curveto(3.745,0.44)(3.79,0.44)(3.82,0.44)
}
\pscustom[linewidth=0.04,linecolor=color1514]
{
\newpath
\moveto(3.36,-0.24)
\lineto(3.41,-0.22)
\curveto(3.435,-0.21)(3.505,-0.19)(3.55,-0.18)
\curveto(3.595,-0.17)(3.67,-0.16)(3.7,-0.16)
\curveto(3.73,-0.16)(3.785,-0.16)(3.81,-0.16)
\curveto(3.835,-0.16)(3.895,-0.17)(3.93,-0.18)
\curveto(3.965,-0.19)(4.045,-0.215)(4.09,-0.23)
\curveto(4.135,-0.245)(4.2,-0.27)(4.22,-0.28)
\curveto(4.24,-0.29)(4.285,-0.3)(4.31,-0.3)
\curveto(4.335,-0.3)(4.365,-0.3)(4.38,-0.3)
}
\pscustom[linewidth=0.04,linecolor=color1514]
{
\newpath
\moveto(7.0,0.28)
\lineto(7.04,0.29)
\curveto(7.06,0.295)(7.095,0.31)(7.11,0.32)
\curveto(7.125,0.33)(7.16,0.345)(7.18,0.35)
\curveto(7.2,0.355)(7.25,0.36)(7.28,0.36)
\curveto(7.31,0.36)(7.365,0.355)(7.39,0.35)
\curveto(7.415,0.345)(7.46,0.335)(7.48,0.33)
\curveto(7.5,0.325)(7.545,0.32)(7.57,0.32)
\curveto(7.595,0.32)(7.65,0.33)(7.68,0.34)
\curveto(7.71,0.35)(7.755,0.365)(7.8,0.38)
}
\end{pspicture} 
}
\end{center}
\caption{{The choice of $S_{k_0}$.}}	
\label{fig1}
\end{figure}
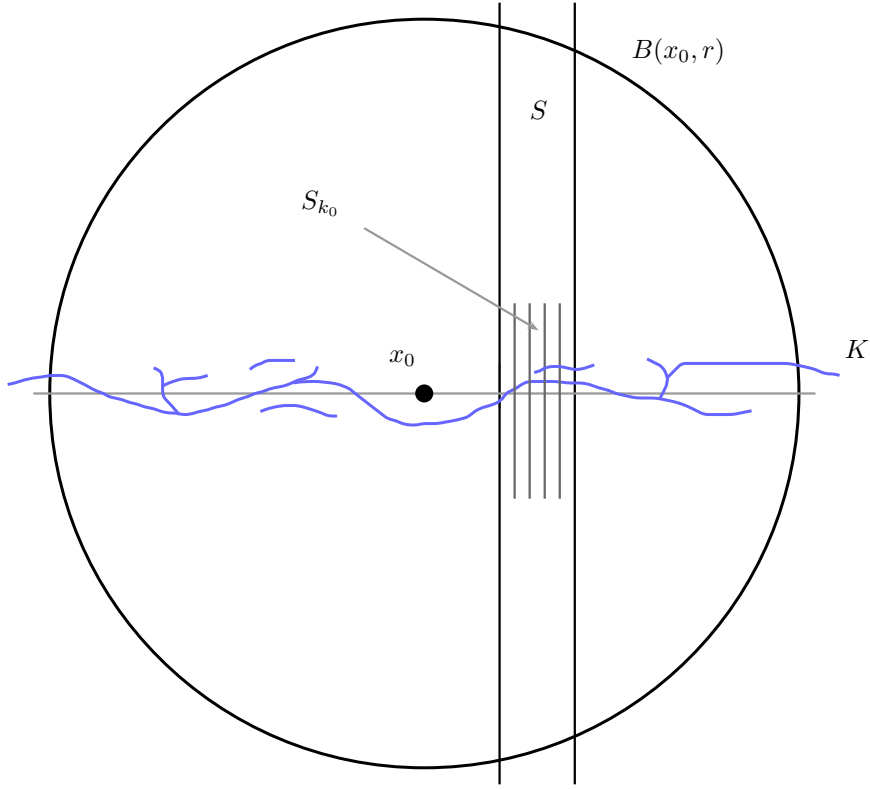
%%%%%%%%%%%%%%%%%

By the separation property of $K$, one can find inside  $ K\cap S_{k_0}$,  an injective Lipschitz curve $\Gamma$ connecting both vertical sides of $\partial S_{k_0}$ (see Lemma \ref{existGamma}). In particular, we have
\begin{equation}\label{eq:gamma}
\frac{r}{5N}\leq \mathcal{H}^1(\Gamma)\leq \frac{(1+E)r}{5N},
\end{equation}
and thus \eqref{estimateNN} leads to 
\begin{equation}
\mathcal{H}^1(K\cap S_{k_0}\setminus \Gamma)\leq \frac{Er}{5N}\leq   \eta  E r. \label{lenG}
\end{equation}

Thanks to the length estimate \eqref{eq:gamma}, if we denote by $z,z' \in \partial S_{k_0}$ both points of $\Gamma$ on the boundary of $S_{k_0}$, we have in particular, for every point $y \in \Gamma$, 
$$|y-z|\leq \mathcal{H}^1(\Gamma)\leq \frac{(1+E)}{5N}r \leq \frac{2}{5N}r,$$
because $E\leq 1$. In other words,

\begin{equation}
\sup_{y\in \Gamma} |y-z|\leq \frac{2}{5N}r\leq 2 \eta r. \label{estimate100}
\end{equation}

We now finally give a bound on the distance from the points of  $K$ to the curve $\Gamma$ in  a strip slightly thinner than $S_{k_0}$, by use of the Ahlfors-regularity of $K$.  For that purpose we let $S'\subset S_{k_0}$ be  defined by 
$$S':=\left\{y \in S_{k_0} : \;  \frac{r}{5} +\frac{k_0-1}{5N}r +\delta r\leq (y-x_0)_1 \leq  \frac{r}{5}+ \frac{k_0}{5N}r -\delta r\; \right\},$$
with $\delta:=\frac{2\eta E}{\theta_0}$. Since $E \leq 10^{-2}\theta_0$, we deduce that $\delta \leq \frac{10^{-1}}{5N}$, so that $S'$ is not empty.

We claim that 
 \begin{equation}
\sup_{y\in K\cap S'}{\rm dist}(y,\Gamma) \leq \frac{2 \eta E}{\theta_0}r  . \label{aprO}
\end{equation}
Indeed, if $y\in K \cap S'$  is  such that $d:={\rm dist}(y,\Gamma)> \delta r=\frac{2 \eta E}{\theta_0}r$, then $B(y,\delta r)\subset S_{k_0} \setminus \Gamma$ and, by Ahlfors regularity,
$$\mathcal{H}^1(K\cap B(y,\delta r))\geq \theta_0 \delta r=2 \eta E r, $$
which is a contradiction with \eqref{lenG} and proves \eqref{aprO}. 

To conclude, gathering \eqref{aprO} and \eqref{estimate100}, we have obtained 
\begin{equation}
\sup_{y\in K\cap S'} |y-z|\leq \frac{2\eta E}{\theta_0} r+2\eta r\leq 3\eta r,\label{distBB}
\end{equation}
since  $2E/\theta_0\leq 1$. Therefore,   if we define $y_0$ as being the middle point of the segment $[z,z+\frac{r}{5N} e_1]$ (in particular in the middle of $S'$), the conclusion \eqref{AS1} and \eqref{AS2} of the lemma are satisfied.

Next, we notice that by \eqref{etabound}, the  width of $S'$ is exactly  
$$\frac{1}{5N}r-2\delta r = \frac{1}{5N}r -4\frac{\eta E}{\theta_0} r \geq \left(\frac{\eta}{2}-4\frac{\eta E}{\theta_0}\right)r\geq \frac{\eta r}{4}, $$
provided that $E\leq \frac{\theta_0}{16}$, which is valid thanks to \eqref{EboundD}   (see Figure \ref{fig2}). Consequently, using \eqref{distBB} and that $(y_0)_2=z_2$, we deduce that  with this choice of $y_0$ it holds
$$K \cap \big\{ y \in \R^2 : \; |(y-y_0)_1|\leq \eta/8 r \big\}  \subset  K\cap S'\subset  \big\{y \in \R^2 \; : \; |(y-y_0)_2|\leq 3\eta r \big\}.$$

	%%%%%%%%%%%%%%%%%
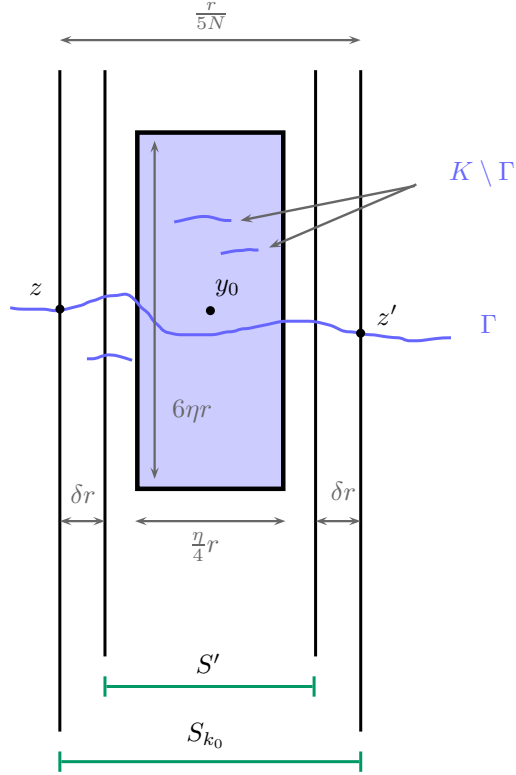
\begin{figure}[!ht]
\begin{center}
\scalebox{1} % Change this value to rescale the drawing.
{
\begin{pspicture}(0,-5.0591993)(7.9818945,5.0791993)
\definecolor{color2590}{rgb}{0.4,0.4,0.4}
\definecolor{color1789b}{rgb}{0.8,0.8,1.0}
\definecolor{color1603}{rgb}{0.0,0.6,0.4}
\definecolor{color1757}{rgb}{0.4,0.4,1.0}
\psframe[linewidth=0.06,dimen=outer,fillstyle=solid,fillcolor=color1789b](3.66,3.3608007)(1.66,-1.4391992)
\psline[linewidth=0.04cm](0.66,4.160801)(0.66,-4.6391993)
\psline[linewidth=0.04cm](4.66,4.160801)(4.66,-4.6391993)
\psline[linewidth=0.04cm](1.26,4.160801)(1.26,-3.6391993)
\psline[linewidth=0.04cm](4.06,4.160801)(4.06,-3.6391993)
\psline[linewidth=0.04cm,linecolor=color1603,tbarsize=0.07055555cm 5.0]{|*-|*}(0.66,-5.0391994)(4.66,-5.0391994)
\psline[linewidth=0.04cm,linecolor=color1603,tbarsize=0.07055555cm 5.0]{|*-|}(1.26,-4.0391994)(4.06,-4.0391994)
\usefont{T1}{ptm}{m}{n}
\rput(2.601455,-4.694199){$S_{k_0}$}
\usefont{T1}{ptm}{m}{n}
\rput(2.6314552,-3.7341993){$S'$}
\psline[linewidth=0.03cm,linecolor=color2590,arrowsize=0.05291667cm 2.0,arrowlength=1.4,arrowinset=0.4]{<->}(0.66,-1.8391992)(1.26,-1.8391992)
\psline[linewidth=0.03cm,linecolor=color2590,arrowsize=0.05291667cm 2.0,arrowlength=1.4,arrowinset=0.4]{<->}(4.06,-1.8391992)(4.66,-1.8391992)
\usefont{T1}{ptm}{m}{n}
\rput(0.9714551,-1.4741992){\color{color2590}$\delta r$}
\usefont{T1}{ptm}{m}{n}
\rput(4.431455,-1.4341992){\color{color2590}$\delta r$}
\psline[linewidth=0.03cm,linecolor=color2590,arrowsize=0.05291667cm 2.0,arrowlength=1.4,arrowinset=0.4]{<->}(0.66,4.5608006)(4.66,4.5608006)
\usefont{T1}{ptm}{m}{n}
\rput(2.6714551,4.885801){\color{color2590}$\frac{r}{5N}$}
\usefont{T1}{ptm}{m}{n}
\rput(0.34145507,1.2458007){$z$}
\usefont{T1}{ptm}{m}{n}
\rput(5.011455,0.90580076){$z'$}
\usefont{T1}{ptm}{m}{n}
\rput(6.361455,0.78580076){\color{color1757}$\Gamma$}
\pscustom[linewidth=0.04,linecolor=color1757]
{
\newpath
\moveto(2.8,1.7208008)
\lineto(2.85,1.7308007)
\curveto(2.875,1.7358007)(2.925,1.7458007)(2.95,1.7508007)
\curveto(2.975,1.7558007)(3.025,1.7608008)(3.05,1.7608008)
\curveto(3.075,1.7608008)(3.12,1.7658008)(3.14,1.7708008)
\curveto(3.16,1.7758008)(3.2,1.7758008)(3.22,1.7708008)
\curveto(3.24,1.7658008)(3.28,1.7658008)(3.3,1.7708008)
}
\pscustom[linewidth=0.04,linecolor=color1757]
{
\newpath
\moveto(2.18,2.1408007)
\lineto(2.23,2.1508007)
\curveto(2.255,2.1558008)(2.32,2.1708007)(2.36,2.1808007)
\curveto(2.4,2.1908007)(2.475,2.2058008)(2.51,2.210801)
\curveto(2.545,2.2158008)(2.61,2.2158008)(2.64,2.210801)
\curveto(2.67,2.2058008)(2.73,2.1908007)(2.76,2.1808007)
\curveto(2.79,2.1708007)(2.845,2.1608007)(2.87,2.1608007)
\curveto(2.895,2.1608007)(2.925,2.1608007)(2.94,2.1608007)
}
\pscustom[linewidth=0.04,linecolor=color1757]
{
\newpath
\moveto(1.62,0.30080077)
\lineto(1.56,0.32080078)
\curveto(1.53,0.33080077)(1.46,0.35080078)(1.42,0.36080077)
\curveto(1.38,0.3708008)(1.31,0.3758008)(1.28,0.3708008)
\curveto(1.25,0.36580077)(1.2,0.35080078)(1.18,0.3408008)
\curveto(1.16,0.33080077)(1.115,0.32080078)(1.09,0.32080078)
\curveto(1.065,0.32080078)(1.035,0.32080078)(1.02,0.32080078)
}
\psline[linewidth=0.032cm,linecolor=color2590,arrowsize=0.05291667cm 2.0,arrowlength=1.4,arrowinset=0.4]{<-}(3.44,1.8008008)(5.4,2.6408007)
\psline[linewidth=0.032cm,linecolor=color2590,arrowsize=0.05291667cm 2.0,arrowlength=1.4,arrowinset=0.4]{<-}(3.06,2.1608007)(5.38,2.6208007)
\usefont{T1}{ptm}{m}{n}
\rput(6.2714553,2.7858007){\color{color1757}$K\setminus \Gamma$}
\psdots[dotsize=0.12](2.66,0.96080077)
\usefont{T1}{ptm}{m}{n}
\rput(2.8914552,1.2658008){$y_0$}
\pscustom[linewidth=0.04,linecolor=color1757]
{
\newpath
\moveto(0.0,1.0008007)
\lineto(0.05,0.9908008)
\curveto(0.075,0.9858008)(0.14,0.9808008)(0.18,0.9808008)
\curveto(0.22,0.9808008)(0.29,0.9808008)(0.32,0.9808008)
\curveto(0.35,0.9808008)(0.405,0.9808008)(0.43,0.9808008)
\curveto(0.455,0.9808008)(0.5,0.97580075)(0.52,0.97080076)
\curveto(0.54,0.96580076)(0.585,0.96080077)(0.61,0.96080077)
\curveto(0.635,0.96080077)(0.68,0.96580076)(0.7,0.97080076)
\curveto(0.72,0.97580075)(0.76,0.9858008)(0.78,0.9908008)
\curveto(0.8,0.9958008)(0.84,1.0058007)(0.86,1.0108008)
\curveto(0.88,1.0158008)(0.92,1.0258008)(0.94,1.0308008)
\curveto(0.96,1.0358008)(0.995,1.0508008)(1.01,1.0608008)
\curveto(1.025,1.0708008)(1.065,1.0858008)(1.09,1.0908008)
\curveto(1.115,1.0958008)(1.16,1.1108007)(1.18,1.1208007)
\curveto(1.2,1.1308007)(1.245,1.1458008)(1.27,1.1508008)
\curveto(1.295,1.1558008)(1.345,1.1608008)(1.37,1.1608008)
\curveto(1.395,1.1608008)(1.44,1.1658008)(1.46,1.1708008)
\curveto(1.48,1.1758008)(1.515,1.1808008)(1.53,1.1808008)
\curveto(1.545,1.1808008)(1.575,1.1708008)(1.59,1.1608008)
\curveto(1.605,1.1508008)(1.635,1.1258007)(1.65,1.1108007)
\curveto(1.665,1.0958008)(1.695,1.0608008)(1.71,1.0408008)
\curveto(1.725,1.0208008)(1.755,0.9908008)(1.77,0.9808008)
\curveto(1.785,0.97080076)(1.815,0.9458008)(1.83,0.9308008)
\curveto(1.845,0.9158008)(1.87,0.8808008)(1.88,0.8608008)
\curveto(1.89,0.84080076)(1.92,0.8058008)(1.94,0.7908008)
\curveto(1.96,0.77580076)(1.995,0.7408008)(2.01,0.72080076)
\curveto(2.025,0.7008008)(2.065,0.6758008)(2.09,0.6708008)
\curveto(2.115,0.6658008)(2.16,0.65580076)(2.18,0.65080076)
\curveto(2.2,0.64580077)(2.245,0.6408008)(2.27,0.6408008)
\curveto(2.295,0.6408008)(2.345,0.6408008)(2.37,0.6408008)
\curveto(2.395,0.6408008)(2.445,0.6408008)(2.47,0.6408008)
\curveto(2.495,0.6408008)(2.545,0.6408008)(2.57,0.6408008)
\curveto(2.595,0.6408008)(2.645,0.6408008)(2.67,0.6408008)
\curveto(2.695,0.6408008)(2.74,0.64580077)(2.76,0.65080076)
\curveto(2.78,0.65580076)(2.82,0.6658008)(2.84,0.6708008)
\curveto(2.86,0.6758008)(2.905,0.6808008)(2.93,0.6808008)
\curveto(2.955,0.6808008)(3.0,0.6908008)(3.02,0.7008008)
\curveto(3.04,0.71080077)(3.085,0.72080076)(3.11,0.72080076)
\curveto(3.135,0.72080076)(3.18,0.72580075)(3.2,0.7308008)
\curveto(3.22,0.7358008)(3.265,0.7458008)(3.29,0.7508008)
\curveto(3.315,0.7558008)(3.365,0.7658008)(3.39,0.77080077)
\curveto(3.415,0.77580076)(3.46,0.78580076)(3.48,0.7908008)
\curveto(3.5,0.7958008)(3.545,0.8058008)(3.57,0.8108008)
\curveto(3.595,0.8158008)(3.645,0.8208008)(3.67,0.8208008)
\curveto(3.695,0.8208008)(3.745,0.8208008)(3.77,0.8208008)
\curveto(3.795,0.8208008)(3.845,0.8208008)(3.87,0.8208008)
\curveto(3.895,0.8208008)(3.945,0.8208008)(3.97,0.8208008)
\curveto(3.995,0.8208008)(4.045,0.8158008)(4.07,0.8108008)
\curveto(4.095,0.8058008)(4.14,0.7958008)(4.16,0.7908008)
\curveto(4.18,0.78580076)(4.22,0.77580076)(4.24,0.77080077)
\curveto(4.26,0.7658008)(4.3,0.7508008)(4.32,0.7408008)
\curveto(4.34,0.7308008)(4.38,0.71080077)(4.4,0.7008008)
\curveto(4.42,0.6908008)(4.465,0.6808008)(4.49,0.6808008)
\curveto(4.515,0.6808008)(4.57,0.6758008)(4.6,0.6708008)
\curveto(4.63,0.6658008)(4.68,0.65580076)(4.7,0.65080076)
\curveto(4.72,0.64580077)(4.765,0.6408008)(4.79,0.6408008)
\curveto(4.815,0.6408008)(4.865,0.6358008)(4.89,0.6308008)
\curveto(4.915,0.6258008)(4.98,0.6158008)(5.02,0.6108008)
\curveto(5.06,0.6058008)(5.125,0.60080075)(5.15,0.60080075)
\curveto(5.175,0.60080075)(5.215,0.59080076)(5.23,0.5808008)
\curveto(5.245,0.5708008)(5.285,0.5608008)(5.31,0.5608008)
\curveto(5.335,0.5608008)(5.385,0.5608008)(5.41,0.5608008)
\curveto(5.435,0.5608008)(5.49,0.5658008)(5.52,0.5708008)
\curveto(5.55,0.5758008)(5.625,0.58580077)(5.67,0.59080076)
\curveto(5.715,0.59580076)(5.785,0.60080075)(5.86,0.60080075)
}
\psline[linewidth=0.03cm,linecolor=color2590,arrowsize=0.05291667cm 2.0,arrowlength=1.4,arrowinset=0.4]{<->}(1.92,3.2208009)(1.92,-1.2791992)
\usefont{T1}{ptm}{m}{n}
\rput(2.4214551,-0.3741992){\color{color2590}$6\eta r$}
\psline[linewidth=0.03cm,linecolor=color2590,arrowsize=0.05291667cm 2.0,arrowlength=1.4,arrowinset=0.4]{<->}(1.66,-1.8391992)(3.66,-1.8391992)
\usefont{T1}{ptm}{m}{n}
\rput(2.581455,-2.174199){\color{color2590}$\frac{\eta}{4}r$}
\psdots[dotsize=0.12](0.66,0.9808008)
\psdots[dotsize=0.12](4.66,0.66080076)
\end{pspicture} 
}

\end{center}
\caption{{The set $K$   is trapped into a rectangle of size $\simeq \eta r$.}}	
\label{fig2}
\end{figure}
%%%%%%%%%%%%%%%%%

The proof of the lemma follows by relabeling $\eta/8$ as $\eta$.
\end{proof}

We are now in the position to establish an improved version of the extension lemma. Its proof is similar to that of Proposition~\ref {prop1}, the difference being the definition of the wall set that  has now size $\eta r $ instead of $r\beta(x_0,r)$.

\begin{lemma}[Extension Lemma] \label{extLem}   
Let $(u,K) \in \mathcal A(\Omega)$ be a minimizer of the Griffith functional, and let $x_0 \in K$ and $r>0$ be such that $\overline B(x_0,r)\subset \Omega$ and
$$\omega_u(x_0,r)+\beta_K(x_0,r)  \leq \frac{1}{5C_*}\min\left(1, 10^{-2}\theta_0\right),$$ 
where $\theta_0$ is the Ahlfors regularity constant of $K$ and $C_*>0$ is the universal constant given in Proposition~\ref {prop_para}. We also assume that $K$ separates $D^\pm(x_0,r)$ in   $\overline B(x_0,r)$.

Then for all $0< \eta < 10^{-4}$ there exist:
\begin{itemize}
\item {an open} rectangle  $U$ such that $\overline B(x_0,r/5)\subset U \subset  {B}(x_0,r)$, 
\item a wall set (i.e. union of two vertical segments) $\Sigma \subset \partial U$ such that 
$K \cap \partial U \subset \Sigma$, $u \in H^1(\partial U \setminus \Sigma;\R^2)$ 
and $\mathcal{H}^1(\Sigma)\leq 120\eta r$.
\end{itemize}

In addition, if $K' \subset \Omega$ is a closed set such that 
$K' \setminus U=K \setminus U$ and $D^\pm(x_0,r/5)$ are contained in two different connected components of $U \setminus K'$, then  there exists a function   $v \in H^1(\Omega \setminus K';\R^2)$ such that 
$$v=u \text{ on } (\Omega\setminus U)\setminus \Sigma$$
and
\begin{equation}\label{ext_rect}
\int_{U \setminus K'} |e(v)|^2 \, dx \leq \frac{C}{\eta^6} \int_{B(x_0,r)\setminus K} |e(u)|^2 \, dx,
\end{equation}
where $C>0$ is universal. 
\end{lemma}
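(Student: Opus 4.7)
The plan is to first apply Lemma~\ref{bow-tie} (with the given $\eta$, possibly after trivial rescaling by a universal factor) to obtain the two points $y_0, z_0$ in the annular region of $B(x_0,r)$, with $(z_0-x_0)_1\in(-2r/5,-r/5)$ and $(y_0-x_0)_1\in(r/5,2r/5)$, such that in the vertical slabs of width $\eta r$ about these points the set $K$ is trapped in horizontal strips of height $60\eta r$ about $(z_0)_2$ and $(y_0)_2$. I would then let $U$ be the axis-aligned open rectangle (in the local basis $\{e_1,e_2\}$)
$$U := ((z_0)_1, (y_0)_1) \times ((x_0)_2 - h, (x_0)_2 + h),$$
where $h\in(r/5,r/2)$ is chosen generically by Fubini so that the trace of $u$ on the horizontal sides is $H^1$; the horizontal positions $(y_0)_1, (z_0)_1$, which were already free within intervals of length $\eta r$ by Lemma~\ref{bow-tie}, can be adjusted similarly so that the restriction of $u$ to the vertical sides is $H^1$ off $\Sigma$. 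The wall set is defined as
$$\Sigma := \{(z_0)_1\}\times[(z_0)_2-30\eta r,(z_0)_2+30\eta r]\ \cup\ \{(y_0)_1\}\times[(y_0)_2-30\eta r,(y_0)_2+30\eta r].$$
Then $\mathcal H^1(\Sigma)\le 120\eta r$; the inclusion $K\cap\partial U\subset\Sigma$ is immediate on the vertical sides from Lemma~\ref{bow-tie}, and holds on the horizontal sides because $h\ge r/5\gg r\beta_K(x_0,r)$ so those sides lie outside the strip containing $K$. The containment $\overline B(x_0,r/5)\subset U\subset B(x_0,r)$ follows from the horizontal bounds on $y_0, z_0$ together with the choice $h\in(r/5,r/2)$.

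Next I would produce the displacement field. Since $K$ separates $D^\pm(x_0,r)$ in $\overline B(x_0,r)$ and is confined to a narrow strip, the two components $A^\pm$ of $B(x_0,r)\setminus K$ containing $D^\pm$ are uniformly Lipschitz domains (with constants independent of $K$), and the Poincar\'e--Korn inequality~\eqref{poincare-korn} on each $A^\pm$ yields rigid motions $\rho^\pm(x)=R^\pm x+b^\pm$ such that $u^\pm:=u-\rho^\pm\in H^1(A^\pm;\R^2)$ with
$$\|u^\pm\|_{H^1(A^\pm)} \le C\,\|e(u)\|_{L^2(B(x_0,r)\setminus K)}.$$
The main (and most technical) step is to construct extensions $\tilde u^\pm\in H^1(U;\R^2)$ whose traces on the upper and lower arcs $C^\pm$ of $\partial U\setminus\Sigma$ coincide with $u^\pm$, and which satisfy
$$\int_U |\nabla\tilde u^\pm|^2\,dx \ \le\ \frac{C}{\eta^6}\int_{A^\pm}|\nabla u^\pm|^2\,dx.$$
This will be the core obstacle. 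The strategy I would follow is to fill in the missing boundary values of $u^\pm$ on $\Sigma$ and on $C^\mp$ by cutoff and reflection procedures through thin regions of scale $\eta r$ (smoothly interpolating across the two vertical $60\eta r$-long wall segments to glue $C^+$ and $C^-$ into a globally continuous boundary datum on $\partial U$), and then to harmonically extend the resulting trace in $U$, invoking an analog of Lemma~\ref{extensionL} for the rectangle $U$. Each thin-region Poincar\'e or trace estimate picks up a factor of $\eta^{-2}$, and the composition of three such losses accounts for the $\eta^{-6}$ blow-up in the final constant.

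Finally, I would assemble $v$. Let $W^\pm$ denote the connected component of $U\setminus K'$ containing $D^\pm(x_0,r/5)$; these are distinct by hypothesis. A topological argument using $K'\cap\partial U\subset K\cap\partial U\subset\Sigma$, together with the fact (from Lemma~\ref{topological1}-type reasoning) that $D^+(x_0,r/5)$ and $D^-(x_0,r/5)$ lie respectively above and below the vertical extent of $\Sigma$, shows that $C^+\subset\partial W^+$ and $C^-\subset\partial W^-$. I then define
$$v(x) := \begin{cases} \tilde u^+(x)+\rho^+(x), & x\in W^+,\\ \tilde u^-(x)+\rho^-(x), & x\in W^-,\\ u(x), & x\in\Omega\setminus U,\end{cases}$$
extending $v$ arbitrarily (by any $H^1$ function) on the remaining components of $U\setminus K'$, if any. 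Since $e(\rho^\pm)=0$, we have $|e(v)|^2 = |e(\tilde u^\pm)|^2 \le |\nabla\tilde u^\pm|^2$ on each $W^\pm$. The trace matching $\tilde u^\pm|_{C^\pm}=u^\pm|_{C^\pm}$ yields $v=u$ on $\partial U\setminus\Sigma=C^+\cup C^-$, so that $v$ glues across $\partial U\setminus\Sigma$ into an $H^1$ function on $\Omega\setminus(K'\cup\Sigma)$, hence on $\Omega\setminus K'$ (with $\Sigma$ absorbed into the singular set as the statement~$v=u$ on $(\Omega\setminus U)\setminus\Sigma$ already anticipates). Summing the extension estimates on $W^+$ and $W^-$ and invoking Korn on $A^\pm$ yields~\eqref{ext_rect}.
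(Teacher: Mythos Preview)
Your proposal has a genuine gap at the Korn step. You write that ``the two components $A^\pm$ of $B(x_0,r)\setminus K$ containing $D^\pm$ are uniformly Lipschitz domains (with constants independent of $K$)'' and then invoke~\eqref{poincare-korn}. But this is precisely the obstacle the entire paper is built around: the boundary of each $A^\pm$ contains a piece of the crack $K$, and $K$ is only known to be closed, Ahlfors regular, and $\mathcal H^1$-rectifiable. It need not be a Lipschitz curve, so $A^\pm$ need not be Lipschitz domains, and no uniform Korn inequality is available there. (This is item (i) in the introduction.) Without that step, you have no control on $\nabla u$ in terms of $e(u)$, and the rest of the construction (boundary traces, harmonic extension) collapses.

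The paper's proof avoids touching $K$ altogether when it applies Korn. After invoking Lemma~\ref{bow-tie}, it builds a thin \emph{rectangular annulus} $A\subset B(x_0,r)$ of thickness $\sim\eta r$ (two vertical strips $V_1,V_2$ through $y_0,z_0$ and two horizontal strips $H_1,H_2$ near height $\pm r/3$). The point of the bow-tie lemma is that $K\cap A$ is trapped inside two small boxes $T_1,T_2$ of size $\sim\eta r$; removing them leaves $A'=A^+\cup A^-$, each a union of genuine thin rectangles that \emph{do not meet $K$ at all}. Korn is then applied rectangle by rectangle via Lemma~\ref{KornRect}, which costs $\eta^{-4}$ per rectangle; matching the skew-symmetric matrices across the overlaps of horizontal and vertical pieces costs one more power, giving the $\eta^{-5}$ in~\eqref{R+}. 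A Fubini slicing inside each of the four thin strips then selects the four sides of $U$ with good one-dimensional trace bounds, at the price of a further $\eta^{-1}$, yielding the final $\eta^{-6}$ in~\eqref{goodbound}. The wall set is $\Sigma=(T_1\cup T_2)\cap\partial U$, and the extension into $U$ is obtained by pushing $\partial U$ bilipschitzly to a circle and invoking Lemma~\ref{extensionL}. Your accounting of the $\eta^{-6}$ (``three losses of $\eta^{-2}$'') does not match this mechanism and is in any case moot once the Korn step fails.
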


\begin{proof}
We denote by $\{e_1,e_2\}$ an orthogonal system such that $L(x_0,r)$ is directed by $e_1$ and we apply Lemma~\ref {bow-tie}, which gives the existence of   two points $y_0$ and $z_0$ $\in B(x_0,2r/5)\setminus \overline B(x_0,r/5)$ satisfying \eqref{AS1}--\eqref{AS4}. In order to construct the rectangle  $U$ and the wall set $\Sigma$, we need to introduce a  domain $A$ which is a  ``rectangular annulus'' of thickness of order $\eta r$.

\medskip

{\bf Step 1: Construction of a rectangular annulus A.} The vertical parts of $A$ are defined as being the following open rectangles
$$V_1 :=  \left\{x \in \R^2 :\;  |(x-y_0)_1|<  \eta r ,\; \;  \;   |(x-x_0)_2|< \frac{1}{3}r\right\}$$
and 
$$V_2:=\left\{x \in \R^2 :\;  |(x-z_0)_1|<  \eta r, \; \;  \;   |(x-x_0)_2|< \frac{1}{3}r\right\}.$$
Notice that $(y_0-x_0)_1\leq \frac{2}{5}r$ and $\eta r\leq 10^{-2} r$, so that 
$$\sup_{y \in V_1} (y-x_0)_1 \leq \frac{2}{5}r+10^{-2}r=\frac{41}{100}r,$$
which means that the right corners of $V_1$ have a distance to $x_0$ bounded by $\sqrt{\frac{41^2}{100^2}+\frac{1}{9}}r< r$ and therefore 
$$V_1\subset B(x_0,r).$$
By symmetry,  $V_2\subset B(x_0,r)$ as well.

Now  the horizontal parts of $A$ are  given by the following open rectangles
$$H_1:= \left\{x\in \R^2 : \;   (z_0)_1 -\eta r  < (x-x_0)_1  < (y_0)_1 +\eta r, \quad  \frac{1}{3}r-\eta r < (x-x_0)_2 < \frac{1}{3}r  \right\}$$
and 
$$H_2:= \left\{x \in \R^2 : \;   (z_0)_1 -\eta r  < (x-x_0)_1  < (y_0)_1 +\eta r, \quad  -\frac{1}{3}r < (x-x_0)_2 < -\frac{1}{3}r +\eta r \right\}.$$

Note that   the four rectangles $V_1$, $V_2$, $H_1$, and $H_2$ are all contained in the ball $B(x_0,r)$. Finally, we define the ``rectangular annulus'' $A$ by
$$A:=  V_1  \cup V_2\cup H_1\cup  H_2,$$
 which satisfies $\overline B(x_0,r/5) \subset A \subset B(x_0,r)$, because $\frac{1}{3}r-\eta r\geq \frac{1}{3}r-\frac{1}{100} r=\frac{97}{300}r> \frac{r}{5}$  (see Figure \ref{fig3}).

Next, we consider the two closed boxes 
$$T_1 :=  \Big\{x \in \R^2 : \;   | (x-y_0)_1 | \leq  \eta r \text{ and } |(x-y_0)_2| \leq 30\eta r \Big\}\subset V_1\subset A,$$
$$T_2 := \Big\{x \in \R^2 : \;   | (x-z_0)_1 | \leq  \eta r \text{ and } |(x-z_0)_2| \leq 30\eta r \Big\}\subset V_2\subset A,$$

the main point being that $K\cap A \subset T_1\cup T_2$. 

%%%%%%%%%%%%%%%%%
		%%%%%%%%%%%%%%%%%
			%%%%%%%%%%%%%%%%%
\begin{figure}[!ht]
\begin{center}
\scalebox{1} % Change this value to rescale the drawing.
{
\begin{pspicture}(0,-5.0)(11.641894,5.0)
\definecolor{color558}{rgb}{0.4,0.4,0.4}
\definecolor{color292b}{rgb}{0.8,0.8,1.0}
\definecolor{color298}{rgb}{0.4,0.4,1.0}
\definecolor{color306}{rgb}{0.6,0.6,0.6}
\psline[linewidth=0.03cm,linecolor=color558](2.94,2.0)(2.94,-2.4)
\psline[linewidth=0.03cm,linecolor=color558](3.34,1.6)(3.34,-2.0)
\psline[linewidth=0.03cm,linecolor=color558](6.94,1.6)(6.9549623,-1.9849999)
\psline[linewidth=0.03cm,linecolor=color558](7.34,2.0)(7.34,-2.4)
\psframe[linewidth=0.04,dimen=outer,fillstyle=solid,fillcolor=color292b](3.34,0.44)(2.94,-0.6)
\psframe[linewidth=0.04,dimen=outer,fillstyle=solid,fillcolor=color292b](7.34,0.64)(6.92,-0.36)
\pscircle[linewidth=0.04,dimen=outer](5.54,0.0){5.0}
\usefont{T1}{ptm}{m}{n}
\rput(8.921455,4.545){$B(x_0,r)$}
\pscustom[linewidth=0.04,linecolor=color298]
{
\newpath
\moveto(0.0,0.12)
\lineto(0.05,0.13)
\curveto(0.075,0.135)(0.145,0.155)(0.19,0.17)
\curveto(0.235,0.185)(0.325,0.205)(0.37,0.21)
\curveto(0.415,0.215)(0.485,0.225)(0.51,0.23)
\curveto(0.535,0.235)(0.585,0.24)(0.61,0.24)
\curveto(0.635,0.24)(0.685,0.24)(0.71,0.24)
\curveto(0.735,0.24)(0.785,0.23)(0.81,0.22)
\curveto(0.835,0.21)(0.88,0.19)(0.9,0.18)
\curveto(0.92,0.17)(0.97,0.145)(1.0,0.13)
\curveto(1.03,0.115)(1.085,0.085)(1.11,0.07)
\curveto(1.135,0.055)(1.19,0.03)(1.22,0.02)
\curveto(1.25,0.01)(1.295,-0.01)(1.31,-0.02)
\curveto(1.325,-0.03)(1.36,-0.045)(1.38,-0.05)
\curveto(1.4,-0.055)(1.445,-0.065)(1.47,-0.07)
\curveto(1.495,-0.075)(1.545,-0.09)(1.57,-0.1)
\curveto(1.595,-0.11)(1.645,-0.125)(1.67,-0.13)
\curveto(1.695,-0.135)(1.745,-0.15)(1.77,-0.16)
\curveto(1.795,-0.17)(1.85,-0.185)(1.88,-0.19)
\curveto(1.91,-0.195)(1.965,-0.21)(1.99,-0.22)
\curveto(2.015,-0.23)(2.065,-0.245)(2.09,-0.25)
\curveto(2.115,-0.255)(2.165,-0.26)(2.19,-0.26)
\curveto(2.215,-0.26)(2.26,-0.265)(2.28,-0.27)
\curveto(2.3,-0.275)(2.345,-0.28)(2.37,-0.28)
\curveto(2.395,-0.28)(2.45,-0.27)(2.48,-0.26)
\curveto(2.51,-0.25)(2.57,-0.235)(2.6,-0.23)
\curveto(2.63,-0.225)(2.685,-0.21)(2.71,-0.2)
\curveto(2.735,-0.19)(2.79,-0.175)(2.82,-0.17)
\curveto(2.85,-0.165)(2.905,-0.15)(2.93,-0.14)
\curveto(2.955,-0.13)(3.0,-0.11)(3.02,-0.1)
\curveto(3.04,-0.09)(3.095,-0.07)(3.13,-0.06)
\curveto(3.165,-0.05)(3.22,-0.03)(3.24,-0.02)
\curveto(3.26,-0.01)(3.305,0.005)(3.33,0.01)
\curveto(3.355,0.015)(3.405,0.03)(3.43,0.04)
\curveto(3.455,0.05)(3.5,0.065)(3.52,0.07)
\curveto(3.54,0.075)(3.59,0.085)(3.62,0.09)
\curveto(3.65,0.095)(3.705,0.11)(3.73,0.12)
\curveto(3.755,0.13)(3.8,0.15)(3.82,0.16)
\curveto(3.84,0.17)(3.885,0.185)(3.91,0.19)
\curveto(3.935,0.195)(3.985,0.21)(4.01,0.22)
\curveto(4.035,0.23)(4.07,0.255)(4.08,0.27)
\curveto(4.09,0.285)(4.105,0.315)(4.12,0.36)
}
\pscustom[linewidth=0.04,linecolor=color298]
{
\newpath
\moveto(2.28,-0.28)
\lineto(2.23,-0.22)
\curveto(2.205,-0.19)(2.16,-0.14)(2.14,-0.12)
\curveto(2.12,-0.1)(2.09,-0.065)(2.08,-0.05)
\curveto(2.07,-0.035)(2.06,0.015)(2.06,0.05)
\curveto(2.06,0.085)(2.06,0.155)(2.06,0.19)
\curveto(2.06,0.225)(2.045,0.275)(2.03,0.29)
\curveto(2.015,0.305)(1.985,0.325)(1.94,0.34)
}
\pscustom[linewidth=0.04,linecolor=color298]
{
\newpath
\moveto(2.06,0.1)
\lineto(2.1,0.12)
\curveto(2.12,0.13)(2.17,0.15)(2.2,0.16)
\curveto(2.23,0.17)(2.28,0.185)(2.3,0.19)
\curveto(2.32,0.195)(2.38,0.2)(2.42,0.2)
\curveto(2.46,0.2)(2.525,0.205)(2.55,0.21)
\curveto(2.575,0.215)(2.615,0.225)(2.66,0.24)
}
\pscustom[linewidth=0.04,linecolor=color298]
{
\newpath
\moveto(3.8,0.14)
\lineto(3.86,0.15)
\curveto(3.89,0.155)(3.995,0.16)(4.07,0.16)
\curveto(4.145,0.16)(4.29,0.145)(4.36,0.13)
\curveto(4.43,0.115)(4.53,0.075)(4.56,0.05)
\curveto(4.59,0.025)(4.665,-0.035)(4.71,-0.07)
\curveto(4.755,-0.105)(4.845,-0.175)(4.89,-0.21)
\curveto(4.935,-0.245)(5.0,-0.295)(5.02,-0.31)
\curveto(5.04,-0.325)(5.085,-0.345)(5.11,-0.35)
\curveto(5.135,-0.355)(5.18,-0.37)(5.2,-0.38)
\curveto(5.22,-0.39)(5.265,-0.405)(5.29,-0.41)
\curveto(5.315,-0.415)(5.365,-0.42)(5.39,-0.42)
\curveto(5.415,-0.42)(5.46,-0.415)(5.48,-0.41)
\curveto(5.5,-0.405)(5.545,-0.4)(5.57,-0.4)
\curveto(5.595,-0.4)(5.645,-0.4)(5.67,-0.4)
\curveto(5.695,-0.4)(5.745,-0.395)(5.77,-0.39)
\curveto(5.795,-0.385)(5.85,-0.375)(5.88,-0.37)
\curveto(5.91,-0.365)(5.96,-0.355)(5.98,-0.35)
\curveto(6.0,-0.345)(6.04,-0.335)(6.06,-0.33)
\curveto(6.08,-0.325)(6.115,-0.31)(6.13,-0.3)
\curveto(6.145,-0.29)(6.185,-0.26)(6.21,-0.24)
\curveto(6.235,-0.22)(6.3,-0.19)(6.34,-0.18)
\curveto(6.38,-0.17)(6.455,-0.145)(6.49,-0.13)
\curveto(6.525,-0.115)(6.575,-0.075)(6.59,-0.05)
\curveto(6.605,-0.025)(6.64,0.01)(6.66,0.02)
\curveto(6.68,0.03)(6.72,0.05)(6.74,0.06)
\curveto(6.76,0.07)(6.8,0.095)(6.82,0.11)
\curveto(6.84,0.125)(6.88,0.145)(6.9,0.15)
\curveto(6.92,0.155)(6.965,0.16)(6.99,0.16)
\curveto(7.015,0.16)(7.07,0.16)(7.1,0.16)
\curveto(7.13,0.16)(7.195,0.16)(7.23,0.16)
\curveto(7.265,0.16)(7.335,0.155)(7.37,0.15)
\curveto(7.405,0.145)(7.465,0.14)(7.49,0.14)
\curveto(7.515,0.14)(7.565,0.14)(7.59,0.14)
\curveto(7.615,0.14)(7.665,0.135)(7.69,0.13)
\curveto(7.715,0.125)(7.765,0.11)(7.79,0.1)
\curveto(7.815,0.09)(7.88,0.07)(7.92,0.06)
\curveto(7.96,0.05)(8.02,0.03)(8.04,0.02)
\curveto(8.06,0.01)(8.105,-0.005)(8.13,-0.01)
\curveto(8.155,-0.015)(8.2,-0.025)(8.22,-0.03)
\curveto(8.24,-0.035)(8.285,-0.045)(8.31,-0.05)
\curveto(8.335,-0.055)(8.395,-0.06)(8.43,-0.06)
\curveto(8.465,-0.06)(8.53,-0.06)(8.56,-0.06)
\curveto(8.59,-0.06)(8.645,-0.065)(8.67,-0.07)
\curveto(8.695,-0.075)(8.745,-0.085)(8.77,-0.09)
\curveto(8.795,-0.095)(8.84,-0.105)(8.86,-0.11)
\curveto(8.88,-0.115)(8.93,-0.13)(8.96,-0.14)
\curveto(8.99,-0.15)(9.06,-0.18)(9.1,-0.2)
\curveto(9.14,-0.22)(9.2,-0.25)(9.22,-0.26)
\curveto(9.24,-0.27)(9.285,-0.28)(9.31,-0.28)
\curveto(9.335,-0.28)(9.395,-0.28)(9.43,-0.28)
\curveto(9.465,-0.28)(9.535,-0.28)(9.57,-0.28)
\curveto(9.605,-0.28)(9.665,-0.275)(9.69,-0.27)
\curveto(9.715,-0.265)(9.765,-0.255)(9.79,-0.25)
\curveto(9.815,-0.245)(9.865,-0.235)(9.89,-0.23)
}
\pscustom[linewidth=0.04,linecolor=color298]
{
\newpath
\moveto(8.66,-0.08)
\lineto(8.69,-0.03)
\curveto(8.705,-0.005)(8.735,0.055)(8.75,0.09)
\curveto(8.765,0.125)(8.775,0.185)(8.77,0.21)
\curveto(8.765,0.235)(8.745,0.285)(8.73,0.31)
\curveto(8.715,0.335)(8.68,0.37)(8.66,0.38)
\curveto(8.64,0.39)(8.6,0.41)(8.58,0.42)
\curveto(8.56,0.43)(8.53,0.445)(8.5,0.46)
}
\pscustom[linewidth=0.04,linecolor=color298]
{
\newpath
\moveto(8.76,0.2)
\lineto(8.79,0.23)
\curveto(8.805,0.245)(8.84,0.28)(8.86,0.3)
\curveto(8.88,0.32)(8.915,0.355)(8.93,0.37)
\curveto(8.945,0.385)(8.985,0.4)(9.01,0.4)
\curveto(9.035,0.4)(9.095,0.4)(9.13,0.4)
\curveto(9.165,0.4)(9.23,0.4)(9.26,0.4)
\curveto(9.29,0.4)(9.35,0.4)(9.38,0.4)
\curveto(9.41,0.4)(9.505,0.4)(9.57,0.4)
\curveto(9.635,0.4)(9.79,0.4)(9.88,0.4)
\curveto(9.97,0.4)(10.115,0.4)(10.17,0.4)
\curveto(10.225,0.4)(10.305,0.4)(10.33,0.4)
\curveto(10.355,0.4)(10.405,0.395)(10.43,0.39)
\curveto(10.455,0.385)(10.505,0.375)(10.53,0.37)
\curveto(10.555,0.365)(10.6,0.355)(10.62,0.35)
\curveto(10.64,0.345)(10.68,0.335)(10.7,0.33)
\curveto(10.72,0.325)(10.76,0.31)(10.78,0.3)
\curveto(10.8,0.29)(10.84,0.275)(10.86,0.27)
\curveto(10.88,0.265)(10.925,0.26)(10.95,0.26)
\curveto(10.975,0.26)(11.015,0.255)(11.06,0.24)
}
\usefont{T1}{ptm}{m}{n}
\rput(11.291455,0.585){$K$}
\psdots[dotsize=0.12,linecolor=color306](5.54,0.0)
\psdots[dotsize=0.12,linecolor=color306](5.54,0.0)
\psdots[dotsize=0.16](5.54,0.0)
\usefont{T1}{ptm}{m}{n}
\rput(5.391455,0.405){$x_0$}
\pscustom[linewidth=0.04,linecolor=color298]
{
\newpath
\moveto(3.36,-0.24)
\lineto(3.41,-0.22)
\curveto(3.435,-0.21)(3.505,-0.19)(3.55,-0.18)
\curveto(3.595,-0.17)(3.67,-0.16)(3.7,-0.16)
\curveto(3.73,-0.16)(3.785,-0.16)(3.81,-0.16)
\curveto(3.835,-0.16)(3.895,-0.17)(3.93,-0.18)
\curveto(3.965,-0.19)(4.045,-0.215)(4.09,-0.23)
\curveto(4.135,-0.245)(4.2,-0.27)(4.22,-0.28)
\curveto(4.24,-0.29)(4.285,-0.3)(4.31,-0.3)
\curveto(4.335,-0.3)(4.365,-0.3)(4.38,-0.3)
}
\pscustom[linewidth=0.04,linecolor=color298]
{
\newpath
\moveto(7.0,0.28)
\lineto(7.04,0.29)
\curveto(7.06,0.295)(7.095,0.31)(7.11,0.32)
\curveto(7.125,0.33)(7.16,0.345)(7.18,0.35)
\curveto(7.2,0.355)(7.25,0.36)(7.28,0.36)
\curveto(7.31,0.36)(7.365,0.355)(7.39,0.35)
\curveto(7.415,0.345)(7.46,0.335)(7.48,0.33)
\curveto(7.5,0.325)(7.545,0.32)(7.57,0.32)
\curveto(7.595,0.32)(7.65,0.33)(7.68,0.34)
\curveto(7.71,0.35)(7.755,0.365)(7.8,0.38)
}
\psdots[dotsize=0.12](7.14,0.16)
\psdots[dotsize=0.12](3.14,-0.06)
\psline[linewidth=0.03cm,linecolor=color558](3.34,-2.0)(6.94,-2.0)
\psline[linewidth=0.03cm,linecolor=color558](7.34,-2.4)(2.94,-2.4)
\psline[linewidth=0.03cm,linecolor=color558](2.94,2.0)(7.34,2.0)
\psline[linewidth=0.03cm,linecolor=color558](3.34,1.6)(6.94,1.6)
\usefont{T1}{ptm}{m}{n}
\rput(8.371455,-1.095){$y_0$}
\usefont{T1}{ptm}{m}{n}
\rput(2.1614552,-0.895){$z_0$}
\psline[linewidth=0.03cm,arrowsize=0.05291667cm 2.0,arrowlength=1.4,arrowinset=0.4]{->}(2.16,-0.56)(3.14,-0.08)
\psline[linewidth=0.03cm,arrowsize=0.05291667cm 2.0,arrowlength=1.4,arrowinset=0.4]{->}(8.06,-0.74)(7.2,0.06)
\usefont{T1}{ptm}{m}{n}
\rput(5.051455,1.785){$A$}
\psline[linewidth=0.03cm,arrowsize=0.05291667cm 2.0,arrowlength=1.4,arrowinset=0.4]{<-}(7.16,0.52)(8.12,1.56)
\psline[linewidth=0.03cm,arrowsize=0.05291667cm 2.0,arrowlength=1.4,arrowinset=0.4]{<-}(3.08,0.28)(2.18,1.44)
\usefont{T1}{ptm}{m}{n}
\rput(8.451455,1.825){$T_1$}
\usefont{T1}{ptm}{m}{n}
\rput(2.051455,1.705){$T_2$}
\psline[linewidth=0.03cm,linecolor=color558,arrowsize=0.05291667cm 2.0,arrowlength=1.4,arrowinset=0.4]{<->}(5.14,-2.0)(5.14,-2.4)
\usefont{T1}{ptm}{m}{n}
\rput(5.571455,-2.195){\color{color558}$\delta r$}
\end{pspicture} 
}

\end{center}
\caption{{The rectangular annulus $A$.}}	
\label{fig3}
\end{figure}
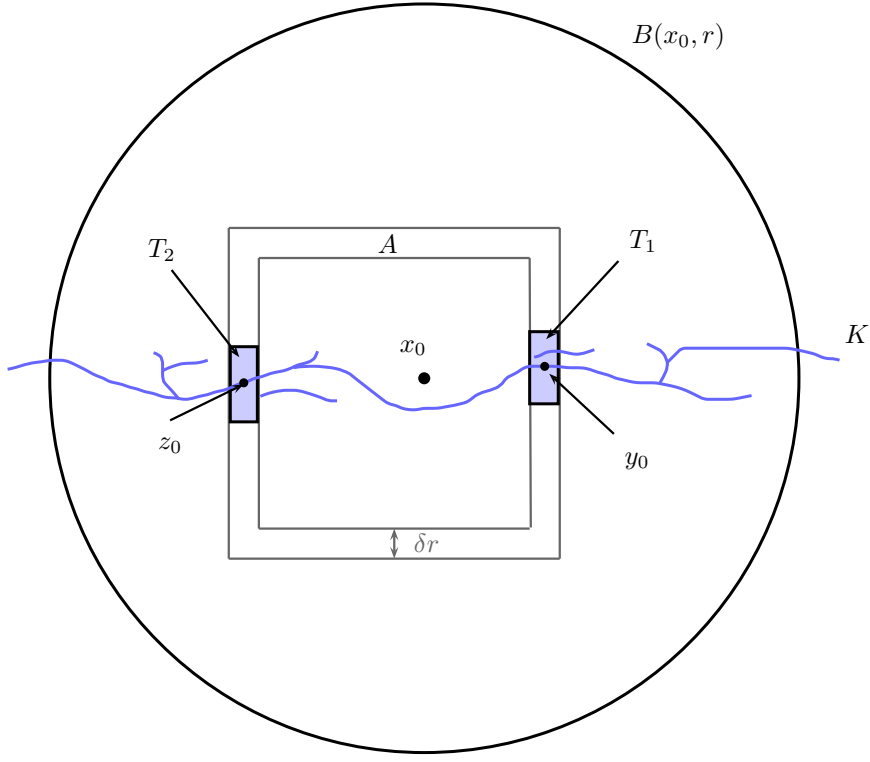
	%%%%%%%%%%%%%%%%%
		%%%%%%%%%%%%%%%%%
			%%%%%%%%%%%%%%%%%

Let us finally consider the subset of $A$ outside the cutting boxes,
$$A' :=A \setminus (T_1 \cup T_2),$$
and let $A^\pm$ be both connected components of $A'$. The open sets $A^\pm$ are Lipschitz domains, and they are actually unions of vertical and horizontal rectangles of thickness of order $\eta$ and  lengths of order $r$ (notice that $30 \eta \leq10^{-2}$). In addition, since by construction we have $K\cap A^\pm=\emptyset$, 
it follows that $u \in H^1(A^\pm;\R^2)$ and that the Korn  inequality (see Lemma~\ref {KornRect}) applies in each rectangle composing $A^\pm$. Therefore, there exist two skew-symmetric matrices $R^\pm$ such that
\begin{equation}
\int_{A^\pm}|\nabla u-R^\pm|^2 \,dx \leq \frac{C }{\eta^5}\int_{A^\pm}|e(u)|^2 \,dx\leq \frac{C}{\eta^5}\int_{B(x_0,r) \setminus K}|e(u)|^2 \, dx, \label{R+}
\end{equation}
for some constant $C>0$ universal, where $\eta^{-5}$ appears estimating the distance between the skew-symmetric matrices in the intersection of two overlapping vertical and horizontal rectangles.

\medskip

{\bf Step 2: Construction of the rectangle $U$.} Let us denote by $R:=R^+ {\bf 1}_{A^+} + R^-{\bf 1}_{A^-}$, then 
$$\int_{A'}|\nabla u-R|^2 \,dx \leq   \frac{C}{\eta^5}\int_{B(x_0,r)\setminus K}|e(u)|^2 \, dx.$$

For any $t\in [-\eta r, \eta r]$ we denote the vertical  line passing through $y(t):= y_0+t e_1$ by $L_{t}:=y(t)+\R e_2$. According to Fubini's Theorem, we have
$$\int_{-\eta r}^{\eta r}\int_{L_t \cap A'} |\nabla u-R|^2  \, d\mathcal{H}^1 \,dt\leq    \frac{C}{\eta^5}\int_{B(x_0,r) \setminus K}|e(u)|^2 \, dx.$$
We can thus find $t_0 \in [-\eta r, \eta r ]$ such that $u \in H^1(L_{t_0} \cap A';\R^2)$ and
$$2\eta r \int_{L_{t_0} \cap A'} |\nabla u-R|^2 \, d\mathcal{H}^1 \leq   \frac{C}{\eta^5}\int_{B(x_0,r) \setminus K}|e(u)|^2\,  dx.$$
We perform the same argument at the point $z_0$, finding some $t_1\in [-\eta  r , \eta r]$ such that, denoting by $L_{t_1}$ the line $z_0+t_1 e_1 + \R e_2$,  one has $u \in H^1(L_{t_1} \cap A';\R^2)$ and
$$\int_{L_{t_1}\cap A'} |\nabla u-R|^2 \,d\mathcal H^1  \leq   \frac{C}{r\eta^6}\int_{B(x_0,r) \setminus K}|e(u)|^2 dx.$$

Arguing similarly for the top horizontal part of $A^+$,   we get a horizontal line $L_{H^+}$ such that  $u \in H^1(L_{H^+} \cap A^+;\R^2)$ and
$$\int_{L_{H^+} \cap A^+} |\nabla u-R^+|^2 \,d\mathcal H^1  \leq   \frac{C}{r\eta^6}\int_{B(x_0,r) \setminus K}|e(u)|^2 dx.$$
The vertical line $L_{t_0}$ intersects $L_{H^+}$ at a single point $a^+_0$, and $L_{t_1}$ intersects $L_{H^+}$ at another single point $a^+_1$. 

We perform a similar construction on the lower part $A^-$ of $A'$ which leads to  another horizontal line $L_{H^-}$ such that  $u \in H^1(L_{H^-} \cap A^-;\R^2)$ and
$$\int_{L_{H^-} \cap A^-} |\nabla u-R^-|^2 \,d\mathcal H^1  \leq   \frac{C}{r\eta^6}\int_{B(x_0,r) \setminus K}|e(u)|^2 dx.$$
The vertical line $L_{t_0}$ intersects $L_{H^-}$ at a single point $a^-_0$, and $L_{t_1}$ intersects $L_{H^-}$ at another single point $a^-_1$. 

Finally, we define $U$ as the rectangle with vertices $(a^-_0,a^-_1,a^+_0,a^+_1)$ (See Figure \ref{fig4}) and we define $\Sigma$ as
$$\Sigma := (T_1 \cup T_2) \cap \partial U,$$
so that $K\cap \partial U\subset \Sigma$, $\mathcal{H}^1(\Sigma)=120\eta r$, and 
\begin{equation}
\int_{\partial U \setminus \Sigma} |\nabla u-R|^2  \, d\mathcal{H}^1 \,dt = \int_{\partial U\cap A'} |\nabla u-R|^2  \, d\mathcal{H}^1 \,dt  \leq    \frac{C}{r \eta^6}\int_{B(x_0,r) \setminus K}|e(u)|^2 \, dx. \label{goodbound}
\end{equation}

	%%%%%%%%%%%%%%%%%
\begin{figure}[!ht]
\begin{center}
\scalebox{1} % Change this value to rescale the drawing.
{
\begin{pspicture}(0,-3.638047)(10.641894,3.638047)
\definecolor{color567}{rgb}{0.4,0.4,0.4}
\definecolor{color392b}{rgb}{0.8,0.8,1.0}
\definecolor{color398}{rgb}{0.4,0.4,1.0}
\psline[linewidth=0.03cm,linecolor=color567,linestyle=dashed,dash=0.16cm 0.16cm](2.14,3.223683)(2.14,-3.1603515)
\psline[linewidth=0.03cm,linecolor=color567,linestyle=dashed,dash=0.16cm 0.16cm](2.7204542,2.6433163)(2.7204542,-2.5799847)
\psline[linewidth=0.03cm,linecolor=color567,linestyle=dashed,dash=0.16cm 0.16cm](7.94,2.6396484)(7.94,-2.5603516)
\psline[linewidth=0.03cm,linecolor=color567,linestyle=dashed,dash=0.16cm 0.16cm](8.54,3.2396483)(8.54,-3.1603515)
\psframe[linewidth=0.04,dimen=outer,fillstyle=solid,fillcolor=color392b](2.74,0.41964844)(2.12,-0.80035156)
\psframe[linewidth=0.04,dimen=outer,fillstyle=solid,fillcolor=color392b](8.56,0.73964846)(7.94,-0.54035157)
\pscustom[linewidth=0.04,linecolor=color398]
{
\newpath
\moveto(0.0,-0.040351562)
\lineto(0.05,-0.030351562)
\curveto(0.075,-0.025351562)(0.145,-0.0053515625)(0.19,0.009648438)
\curveto(0.235,0.024648437)(0.325,0.04464844)(0.37,0.049648438)
\curveto(0.415,0.054648437)(0.485,0.064648435)(0.51,0.06964844)
\curveto(0.535,0.07464844)(0.585,0.079648435)(0.61,0.079648435)
\curveto(0.635,0.079648435)(0.685,0.079648435)(0.71,0.079648435)
\curveto(0.735,0.079648435)(0.785,0.06964844)(0.81,0.05964844)
\curveto(0.835,0.049648438)(0.88,0.029648438)(0.9,0.019648438)
\curveto(0.92,0.009648438)(0.97,-0.015351563)(1.0,-0.030351562)
\curveto(1.03,-0.04535156)(1.085,-0.075351566)(1.11,-0.09035156)
\curveto(1.135,-0.10535156)(1.19,-0.13035156)(1.22,-0.14035156)
\curveto(1.25,-0.15035157)(1.295,-0.17035156)(1.31,-0.18035156)
\curveto(1.325,-0.19035156)(1.36,-0.20535156)(1.38,-0.21035156)
\curveto(1.4,-0.21535157)(1.445,-0.22535156)(1.47,-0.23035157)
\curveto(1.495,-0.23535156)(1.545,-0.25035155)(1.57,-0.26035157)
\curveto(1.595,-0.27035156)(1.645,-0.28535157)(1.67,-0.29035157)
\curveto(1.695,-0.29535156)(1.745,-0.31035155)(1.77,-0.32035157)
\curveto(1.795,-0.33035156)(1.85,-0.34535158)(1.88,-0.35035157)
\curveto(1.91,-0.35535157)(1.965,-0.37035155)(1.99,-0.38035157)
\curveto(2.015,-0.39035156)(2.065,-0.40535155)(2.09,-0.41035157)
\curveto(2.115,-0.41535157)(2.165,-0.42035156)(2.19,-0.42035156)
\curveto(2.215,-0.42035156)(2.26,-0.42535156)(2.28,-0.43035156)
\curveto(2.3,-0.43535155)(2.345,-0.44035158)(2.37,-0.44035158)
\curveto(2.395,-0.44035158)(2.45,-0.43035156)(2.48,-0.42035156)
\curveto(2.51,-0.41035157)(2.57,-0.39535156)(2.6,-0.39035156)
\curveto(2.63,-0.38535157)(2.685,-0.37035155)(2.71,-0.36035156)
\curveto(2.735,-0.35035157)(2.79,-0.33535156)(2.82,-0.33035156)
\curveto(2.85,-0.32535157)(2.905,-0.31035155)(2.93,-0.30035156)
\curveto(2.955,-0.29035157)(3.0,-0.27035156)(3.02,-0.26035157)
\curveto(3.04,-0.25035155)(3.095,-0.23035157)(3.13,-0.22035156)
\curveto(3.165,-0.21035156)(3.22,-0.19035156)(3.24,-0.18035156)
\curveto(3.26,-0.17035156)(3.305,-0.15535156)(3.33,-0.15035157)
\curveto(3.355,-0.14535156)(3.405,-0.13035156)(3.43,-0.12035156)
\curveto(3.455,-0.11035156)(3.5,-0.09535156)(3.52,-0.09035156)
\curveto(3.54,-0.085351564)(3.59,-0.075351566)(3.62,-0.07035156)
\curveto(3.65,-0.06535156)(3.705,-0.050351564)(3.73,-0.040351562)
\curveto(3.755,-0.030351562)(3.8,-0.010351563)(3.82,0.0)
\curveto(3.84,0.009648438)(3.885,0.024648437)(3.91,0.029648438)
\curveto(3.935,0.034648437)(3.985,0.049648438)(4.01,0.05964844)
\curveto(4.035,0.06964844)(4.07,0.094648436)(4.08,0.10964844)
\curveto(4.09,0.12464844)(4.105,0.15464844)(4.12,0.19964844)
}
\pscustom[linewidth=0.04,linecolor=color398]
{
\newpath
\moveto(1.88,-1.0403515)
\lineto(1.83,-0.98035157)
\curveto(1.805,-0.95035154)(1.76,-0.9003516)(1.74,-0.88035154)
\curveto(1.72,-0.86035156)(1.69,-0.82535154)(1.68,-0.81035155)
\curveto(1.67,-0.79535156)(1.66,-0.74535155)(1.66,-0.7103516)
\curveto(1.66,-0.67535156)(1.66,-0.60535157)(1.66,-0.57035154)
\curveto(1.66,-0.5353516)(1.645,-0.48535156)(1.63,-0.47035158)
\curveto(1.615,-0.45535156)(1.585,-0.43535155)(1.54,-0.42035156)
}
\pscustom[linewidth=0.04,linecolor=color398]
{
\newpath
\moveto(1.66,-0.6603516)
\lineto(1.7,-0.64035153)
\curveto(1.72,-0.63035154)(1.77,-0.61035156)(1.8,-0.6003516)
\curveto(1.83,-0.5903516)(1.88,-0.57535154)(1.9,-0.57035154)
\curveto(1.92,-0.56535155)(1.98,-0.56035155)(2.02,-0.56035155)
\curveto(2.06,-0.56035155)(2.125,-0.55535156)(2.15,-0.55035156)
\curveto(2.175,-0.54535156)(2.215,-0.5353516)(2.26,-0.5203516)
}
\pscustom[linewidth=0.04,linecolor=color398]
{
\newpath
\moveto(3.2,0.17964844)
\lineto(3.26,0.18964843)
\curveto(3.29,0.19464844)(3.395,0.19964844)(3.47,0.19964844)
\curveto(3.545,0.19964844)(3.69,0.18464844)(3.76,0.16964844)
\curveto(3.83,0.15464844)(3.93,0.11464844)(3.96,0.08964844)
\curveto(3.99,0.064648435)(4.065,0.0046484377)(4.11,-0.030351562)
\curveto(4.155,-0.06535156)(4.245,-0.13535157)(4.29,-0.17035156)
\curveto(4.335,-0.20535156)(4.4,-0.25535157)(4.42,-0.27035156)
\curveto(4.44,-0.28535157)(4.485,-0.30535156)(4.51,-0.31035155)
\curveto(4.535,-0.31535158)(4.58,-0.33035156)(4.6,-0.34035155)
\curveto(4.62,-0.35035157)(4.665,-0.36535156)(4.69,-0.37035155)
\curveto(4.715,-0.37535155)(4.765,-0.38035157)(4.79,-0.38035157)
\curveto(4.815,-0.38035157)(4.86,-0.37535155)(4.88,-0.37035155)
\curveto(4.9,-0.36535156)(4.945,-0.36035156)(4.97,-0.36035156)
\curveto(4.995,-0.36035156)(5.045,-0.36035156)(5.07,-0.36035156)
\curveto(5.095,-0.36035156)(5.145,-0.35535157)(5.17,-0.35035157)
\curveto(5.195,-0.34535158)(5.25,-0.33535156)(5.28,-0.33035156)
\curveto(5.31,-0.32535157)(5.36,-0.31535158)(5.38,-0.31035155)
\curveto(5.4,-0.30535156)(5.44,-0.29535156)(5.46,-0.29035157)
\curveto(5.48,-0.28535157)(5.515,-0.27035156)(5.53,-0.26035157)
\curveto(5.545,-0.25035155)(5.585,-0.22035156)(5.61,-0.20035157)
\curveto(5.635,-0.18035156)(5.7,-0.15035157)(5.74,-0.14035156)
\curveto(5.78,-0.13035156)(5.855,-0.10535156)(5.89,-0.09035156)
\curveto(5.925,-0.075351566)(5.975,-0.035351563)(5.99,-0.010351563)
\curveto(6.005,0.0146484375)(6.04,0.049648438)(6.06,0.05964844)
\curveto(6.08,0.06964844)(6.12,0.08964844)(6.14,0.09964844)
\curveto(6.16,0.10964844)(6.2,0.13464844)(6.22,0.14964844)
\curveto(6.24,0.16464844)(6.28,0.18464844)(6.3,0.18964843)
\curveto(6.32,0.19464844)(6.365,0.19964844)(6.39,0.19964844)
\curveto(6.415,0.19964844)(6.47,0.19964844)(6.5,0.19964844)
\curveto(6.53,0.19964844)(6.595,0.19964844)(6.63,0.19964844)
\curveto(6.665,0.19964844)(6.735,0.19464844)(6.77,0.18964843)
\curveto(6.805,0.18464844)(6.865,0.17964844)(6.89,0.17964844)
\curveto(6.915,0.17964844)(6.965,0.17964844)(6.99,0.17964844)
\curveto(7.015,0.17964844)(7.065,0.17464843)(7.09,0.16964844)
\curveto(7.115,0.16464844)(7.165,0.14964844)(7.19,0.13964844)
\curveto(7.215,0.12964843)(7.28,0.10964844)(7.32,0.09964844)
\curveto(7.36,0.08964844)(7.42,0.06964844)(7.44,0.05964844)
\curveto(7.46,0.049648438)(7.505,0.034648437)(7.53,0.029648438)
\curveto(7.555,0.024648437)(7.6,0.0146484375)(7.62,0.009648438)
\curveto(7.64,0.0046484377)(7.685,-0.0053515625)(7.71,-0.010351563)
\curveto(7.735,-0.015351563)(7.795,-0.020351563)(7.83,-0.020351563)
\curveto(7.865,-0.020351563)(7.93,-0.020351563)(7.96,-0.020351563)
\curveto(7.99,-0.020351563)(8.045,-0.025351562)(8.07,-0.030351562)
\curveto(8.095,-0.035351563)(8.145,-0.04535156)(8.17,-0.050351564)
\curveto(8.195,-0.055351563)(8.24,-0.06535156)(8.26,-0.07035156)
\curveto(8.28,-0.075351566)(8.33,-0.09035156)(8.36,-0.100351565)
\curveto(8.39,-0.11035156)(8.46,-0.14035156)(8.5,-0.16035156)
\curveto(8.54,-0.18035156)(8.6,-0.21035156)(8.62,-0.22035156)
\curveto(8.64,-0.23035157)(8.685,-0.24035156)(8.71,-0.24035156)
\curveto(8.735,-0.24035156)(8.795,-0.24035156)(8.83,-0.24035156)
\curveto(8.865,-0.24035156)(8.935,-0.24035156)(8.97,-0.24035156)
\curveto(9.005,-0.24035156)(9.065,-0.23535156)(9.09,-0.23035157)
\curveto(9.115,-0.22535156)(9.165,-0.21535157)(9.19,-0.21035156)
\curveto(9.215,-0.20535156)(9.265,-0.19535156)(9.29,-0.19035156)
}
\pscustom[linewidth=0.04,linecolor=color398]
{
\newpath
\moveto(8.16,-0.16035156)
\lineto(8.19,-0.13035156)
\curveto(8.205,-0.115351565)(8.24,-0.08035156)(8.26,-0.06035156)
\curveto(8.28,-0.040351562)(8.315,-0.0053515625)(8.33,0.009648438)
\curveto(8.345,0.024648437)(8.385,0.039648436)(8.41,0.039648436)
\curveto(8.435,0.039648436)(8.495,0.039648436)(8.53,0.039648436)
\curveto(8.565,0.039648436)(8.63,0.039648436)(8.66,0.039648436)
\curveto(8.69,0.039648436)(8.75,0.039648436)(8.78,0.039648436)
\curveto(8.81,0.039648436)(8.905,0.039648436)(8.97,0.039648436)
\curveto(9.035,0.039648436)(9.19,0.039648436)(9.28,0.039648436)
\curveto(9.37,0.039648436)(9.515,0.039648436)(9.57,0.039648436)
\curveto(9.625,0.039648436)(9.705,0.039648436)(9.73,0.039648436)
\curveto(9.755,0.039648436)(9.805,0.034648437)(9.83,0.029648438)
\curveto(9.855,0.024648437)(9.905,0.0146484375)(9.93,0.009648438)
\curveto(9.955,0.0046484377)(10.0,-0.0053515625)(10.02,-0.010351563)
\curveto(10.04,-0.015351563)(10.08,-0.025351562)(10.1,-0.030351562)
\curveto(10.12,-0.035351563)(10.16,-0.050351564)(10.18,-0.06035156)
\curveto(10.2,-0.07035156)(10.24,-0.085351564)(10.26,-0.09035156)
\curveto(10.28,-0.09535156)(10.325,-0.100351565)(10.35,-0.100351565)
\curveto(10.375,-0.100351565)(10.415,-0.10535156)(10.46,-0.12035156)
}
\usefont{T1}{ptm}{m}{n}
\rput(10.291455,0.22464843){$K$}
\pscustom[linewidth=0.04,linecolor=color398]
{
\newpath
\moveto(4.96,-0.13178009)
\lineto(5.0668626,-0.08892304)
\curveto(5.120294,-0.067494504)(5.269902,-0.024637451)(5.3660784,-0.0032086181)
\curveto(5.4622545,0.01821991)(5.622549,0.039648436)(5.6866665,0.039648436)
\curveto(5.750784,0.039648436)(5.8683333,0.039648436)(5.9217644,0.039648436)
\curveto(5.975196,0.039648436)(6.103431,0.01821991)(6.178235,-0.0032086181)
\curveto(6.253039,-0.024637451)(6.42402,-0.07820862)(6.520196,-0.11035156)
\curveto(6.616372,-0.1424945)(6.7552943,-0.19606568)(6.7980394,-0.2174945)
\curveto(6.840785,-0.23892303)(6.93696,-0.26035157)(6.9903917,-0.26035157)
\curveto(7.0438232,-0.26035157)(7.1079407,-0.26035157)(7.14,-0.26035157)
}
\pscustom[linewidth=0.04,linecolor=color398]
{
\newpath
\moveto(4.4,0.19964844)
\lineto(4.44,0.20964843)
\curveto(4.46,0.21464844)(4.495,0.22964844)(4.51,0.23964843)
\curveto(4.525,0.24964844)(4.56,0.26464844)(4.58,0.26964843)
\curveto(4.6,0.27464843)(4.65,0.27964842)(4.68,0.27964842)
\curveto(4.71,0.27964842)(4.765,0.27464843)(4.79,0.26964843)
\curveto(4.815,0.26464844)(4.86,0.25464845)(4.88,0.24964844)
\curveto(4.9,0.24464844)(4.945,0.23964843)(4.97,0.23964843)
\curveto(4.995,0.23964843)(5.05,0.24964844)(5.08,0.25964844)
\curveto(5.11,0.26964843)(5.155,0.28464845)(5.2,0.29964843)
}
\psline[linewidth=0.03cm,linecolor=color567,linestyle=dashed,dash=0.16cm 0.16cm](2.74,-2.5603516)(7.94,-2.5603516)
\psline[linewidth=0.03cm,linecolor=color567,linestyle=dashed,dash=0.16cm 0.16cm](8.54,-3.1603515)(2.14,-3.1603515)
\psline[linewidth=0.03cm,linecolor=color567,linestyle=dashed,dash=0.16cm 0.16cm](2.14,3.223683)(8.524996,3.223683)
\psline[linewidth=0.03cm,linecolor=color567,linestyle=dashed,dash=0.16cm 0.16cm](2.7204542,2.6433163)(7.9445415,2.6433163)
\psline[linewidth=0.04cm](8.34,2.8396485)(8.34,-2.9603515)
\psline[linewidth=0.04cm](2.54,2.8396485)(8.34,2.8396485)
\psline[linewidth=0.04cm](2.54,2.8396485)(2.54,-2.9603515)
\psline[linewidth=0.04cm](2.54,-2.9603515)(8.34,-2.9603515)
\psdots[dotsize=0.16](2.54,2.8396485)
\psdots[dotsize=0.16](8.34,2.8396485)
\psdots[dotsize=0.16](8.34,-2.9603515)
\psdots[dotsize=0.16](2.54,-2.9603515)
\usefont{T1}{ptm}{m}{n}
\rput(5.371455,1.4046484){$U$}
\usefont{T1}{ptm}{m}{n}
\rput(8.8014555,3.4446485){$a_0^+$}
\usefont{T1}{ptm}{m}{n}
\rput(1.9414551,3.4446485){$a_1^+$}
\usefont{T1}{ptm}{m}{n}
\rput(8.861455,-3.4153516){$a_0^-$}
\usefont{T1}{ptm}{m}{n}
\rput(1.8414551,-3.3753517){$a_1^-$}
\psline[linewidth=0.1cm](2.56,0.37964845)(2.54,-0.76035154)
\psline[linewidth=0.1cm](8.34,0.69964844)(8.34,-0.54035157)
\psline[linewidth=0.05cm,linecolor=color567,arrowsize=0.05291667cm 2.0,arrowlength=1.4,arrowinset=0.4]{<-}(2.58,-0.82035154)(5.26,-2.2403517)
\psline[linewidth=0.05cm,linecolor=color567,arrowsize=0.05291667cm 2.0,arrowlength=1.4,arrowinset=0.4]{<-}(8.34,-0.56035155)(5.86,-2.2403517)
\usefont{T1}{ptm}{m}{n}
\rput(5.511455,-2.2753515){$\Sigma$}
\end{pspicture} 
}

\end{center}
\caption{{The rectangular domain $U$ and the wall set $\Sigma$.}}	
\label{fig4}
\end{figure}
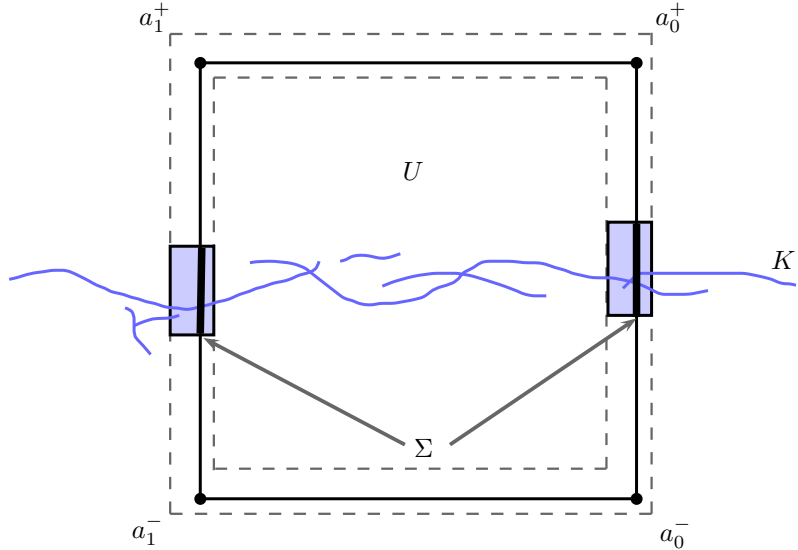
%%%%%%%%%%%%%%%%%

{\bf Step 3: Construction of the competitor \boldmath{$v$}.} Since $U$ is a rectangle with  ``uniform shape'', there exists a bilipschitz mapping $\Phi:\R^2 \to \R^2$ such that $\Phi(U)=B:=B(0,1)$, $\Phi(\partial U)=\partial B$ and $\Phi(\partial U^+)=  \mathscr C_\delta$ for some $\delta<1/2$, where $\partial U^+:=\partial U\cap A^+$ and $\mathscr C_\delta$ is as in the statement of Lemma~\ref {extensionL}. Note that the Lipschitz constants of $\Phi$ and $\Phi^{-1}$ are bounded by $Cr^{-1}$ and $Cr$ where $C$ is universal. Let $R^+$ be the skew symmetric matrix appearing in \eqref{R+}. Since $u \in H^1(\partial U^+;\R^2)$, we infer that the function $x \mapsto u \circ \Phi^{-1}(x)-R^+\Phi^{-1}(x)$ belongs to $H^1(\mathscr C_\delta;\R^2)$. Applying Lemma~\ref {extensionL}, we obtain a function $h^+ \in H^1(B;\R^2)$ such that $h=u \circ \Phi^{-1}-R^+\Phi^{-1}$ on $\mathscr C_\delta$ and 
 $$\int_{B}|\nabla h^+|^2 \, dx\leq C\int_{\mathscr C_\delta} \left|\partial_\tau (u \circ \Phi^{-1}-R^+\Phi^{-1}) \right|^2 \,d\mathcal H^1,$$
where $C>0$ is a universal constant. Then, defining $v^+:=h^+\circ \Phi \in H^1(U;\R^2)$ and noticing that if $\tau$ is a tangent vector to $\partial B$, then $\nabla\Phi^{-1}\tau$ is a tangent vector to $\partial U^+$ $\mathcal H^1$-a.e. in $\partial U^+$, we infer that $v^+(x)=u(x)-R^+x$ for $\mathcal H^1$-a.e. $x \in \partial U^+$ and
 $$\int_{U}|\nabla v^+|^2 \, dx\leq Cr \int_{\partial U \cap A^+} |\partial_\tau u -R^+\tau|^2 \,d\mathcal H^1.$$
Arguing similarly for $\partial U^-:=\partial U\cap A^-$ leads to a function $v^- \in H^1(U;\R^2)$ such that $v^-(x)=u(x)-R^-x$ for $\mathcal H^1$-a.e.  $x \in \partial U^-$ and
$$\int_{U}|\nabla v^-|^2 \, dx\leq Cr \int_{\partial U \cap A^-} |\partial_\tau u -R^-\tau|^2 \,d\mathcal H^1,$$
where $R^-$ is the skew-symmetric matrix appearing in  \eqref{R+}.
 
Let $K' \subset \Omega$ be as in the statement. We construct a function $v \in   H^1(\Omega \setminus K';\R^2)$ by setting
$$v(x):=v^\pm(x)+R^\pm x,$$
if $x$ belongs to the connected component of $U\setminus K'$ containing $D^\pm(x_0,r/5)$, and $v:=u$ otherwise.

Note that by construction
$v=u$ on $\partial U \setminus \Sigma$, and 
$$\int_{U \setminus K'} |e(v)|^2 \, dx \leq Cr \int_{\partial U \setminus \Sigma} |\nabla u-R^\pm|^2\, d\mathcal H^1.$$
Notice that $K \cap \partial U=K'\cap \partial U$ by assumption, because $K \setminus U=K' \setminus U$ and $U$ is open. Thus,  from \eqref{goodbound} it follows that
$$\int_{U \setminus K'} |e(v)|^2 \, dx\leq \frac{C}{\eta^6} \int_{B(x_0,r)\setminus K} |e(u)|^2 \, dx,$$
as required.
 \end{proof}

%%%%%%%%%%%%%%%%%%%%%%%%%%%%%%%

\subsection{Proof of Proposition~\ref {main3}}

%%%%%%%%%%%%%%%%%%%%%%%%%%%%%%%

In Lemma~\ref {extLem} we have constructed the key displacement competitor associated to a separating crack competitor, which will be employed to show the flatness estimate. The construction of the crack competitor will be similar to that of Proposition~\ref {prop1}, i.e. it will be obtained by replacing $K$ by a segment in some ball. The difference here will be in the error appearing in the density estimate, which will depend only on $\omega(x_0,r)$, and not anymore on $\beta(x_0,r)$.

\begin{proposition}\label{main1}  
There exist $\varepsilon'_0 >0$ and $C'>0$ such that the following holds. Let $(u,K) \in \mathcal A(\Omega)$ be a minimizer of the Griffith functional, and let $x_0 \in K$ and  $r>0$ be such that $\overline B(x_0,r)\subset \Omega$,
$$\omega_u(x_0,r)+\beta_K(x_0,r) \leq \varepsilon'_0,$$ 
and $K$ separates $D^{\pm}(x_0,r)$ in   $\overline B(x_0,r)$. Then there exists $s \in (r/40,r/5)$ such that $K \cap \partial B(x_0,s)=\{z,z'\}$, for some $z \neq z'$, and
\begin{equation}\label{eq:flat-est}
\mathcal{H}^1( K\cap B(x_0,s)) \leq   |z-z'|+C'r \omega_u(x_0,r)^{\frac{1}{7}}.
\end{equation}
\end{proposition}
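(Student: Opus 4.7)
The plan is to combine the refined extension Lemma~\ref{extLem} with a segment-replacement competitor, then optimize the free parameter $\eta$ appearing in extLem. Throughout, set $\omega:=\omega_u(x_0,r)$ and $\beta:=\beta_K(x_0,r)$, and choose $\varepsilon'_0$ small enough so that all structural assumptions below hold.

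\textbf{Step 1: Selection of a good inner radius.} Since $\beta\le\varepsilon'_0$, Lemma~\ref{topological1} propagates the separation of $D^\pm(x_0,r)$ in $\overline B(x_0,r)$ down to the separation of $D^\pm(x_0,r/5)$ in $\overline B(x_0,r/5)$. The scaling estimates \eqref{brutest1}--\eqref{brutest2} give $\omega_u(x_0,r/5)+\beta_K(x_0,r/5)\le 5\varepsilon'_0\le \varepsilon_0$. I can therefore apply Lemma~\ref{card2} to the ball $\overline B(x_0,r/5)$ and obtain a radius $s\in(r/40,r/5)$ such that $K\cap\partial B(x_0,s)=\{z,z'\}$ with $z\ne z'$.

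\textbf{Step 2: Extension setup and competitor.} I apply Lemma~\ref{extLem} on the ball $\overline B(x_0,r)$ with a parameter $\eta\in(0,10^{-4})$ to be fixed later, obtaining a rectangle $U$ with $\overline B(x_0,r/5)\subset U\subset B(x_0,r)$ and a wall set $\Sigma\subset\partial U$ with $\mathcal H^1(\Sigma)\le 120\eta r$. Since $B(x_0,s)\subset B(x_0,r/5)\subset U$, I define the crack competitor
\[
\tilde K:=\bigl[K\setminus B(x_0,s)\bigr]\cup[z,z'].
\]
Then $\tilde K\setminus U=K\setminus U$, and a short topological check (the segment $[z,z']$ substitutes a piece of $K$ that intersects $\partial B(x_0,s)$ only at $\{z,z'\}$, so the separating ``barrier'' is preserved) ensures that $\tilde K$ still separates $D^\pm(x_0,r/5)$ in $U$. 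The hypotheses of Lemma~\ref{extLem} are thus met, and I get a displacement $v\in H^1(\Omega\setminus\tilde K;\R^2)$ with $v=u$ on $(\Omega\setminus U)\setminus\Sigma$ and
\[
\int_{U\setminus\tilde K}|e(v)|^2\,dx\;\le\;\frac{C}{\eta^6}\int_{B(x_0,r)\setminus K}|e(u)|^2\,dx\;\le\;\frac{C r\,\omega}{\eta^6},
\]
using the coercivity $\mathbf A\xi:\xi\ge 2\mu|\xi|^2$.

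\textbf{Step 3: Minimality comparison and optimization.} The pair $(v,\tilde K\cup\Sigma)$ is admissible (it agrees with $(u,K)$ outside $U$ up to $\Sigma$, which is included in the new singular set), so testing minimality of $(u,K)$ against it and simplifying the terms that coincide outside $B(x_0,s)$ yields
\[
\mathcal H^1(K\cap B(x_0,s))\;\le\;|z-z'|+\mathcal H^1(\Sigma)+C\int_{U\setminus\tilde K}|e(v)|^2\,dx
\;\le\;|z-z'|+120\,\eta r+\frac{C r\,\omega}{\eta^6}.
\]
The last expression is minimized (up to constants) by the balance $\eta\sim\omega^{1/7}$, and for $\varepsilon'_0$ small enough this choice is admissible ($\eta<10^{-4}$). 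Plugging it in gives both $\eta r$ and $r\omega/\eta^6$ of order $r\omega^{1/7}$, leading to the desired bound $\mathcal H^1(K\cap B(x_0,s))\le|z-z'|+C' r\,\omega^{1/7}$.

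The main obstacle is already packaged in the statement of Lemma~\ref{extLem}: the $\eta^{-6}$ blowup in the extension energy is what forces the subtle $1/7$ exponent after optimization, and the whole argument hinges on the fact that replacing $K\cap B(x_0,s)$ by the segment $[z,z']$ preserves the separation property and does not create new parts of $K'\setminus U$, so that Lemma~\ref{extLem} is genuinely applicable. Beyond that, the argument is a direct minimality comparison.
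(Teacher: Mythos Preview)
Your proposal is correct and follows essentially the same approach as the paper: apply Lemma~\ref{card2} in $B(x_0,r/5)$ to find the good radius $s$, use Lemma~\ref{extLem} on $B(x_0,r)$ to build the rectangle $U$ and wall set $\Sigma$, replace $K\cap B(x_0,s)$ by the segment $[z,z']$, compare via minimality, and optimize $\eta=\omega^{1/7}$. The only cosmetic difference is that the paper verifies the separation in $\overline B(x_0,r/5)$ directly from the strip containment rather than invoking Lemma~\ref{topological1}.
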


\begin{proof} 
We define 
$$\varepsilon'_0:=\min\left(10^{-28},\frac{\varepsilon_0}{5}, \frac{10^{-2}\theta_0}{5C_*},\;\frac{1}{5C_*}\right),$$
where $\varepsilon_0>0$ is the universal constant of Lemma~~\ref{card2}, $\theta_0>0$ is the Ahlfors regularity constant, and $C_*>0$ is the universal constant of Proposition~\ref {prop_para}. We notice that   $\omega(x_0,r/5)+\beta(x_0,r/5)\leq \varepsilon_0$ and that $K$ still separates $D^\pm(x_0,r/5)$ in $\overline B(x_0,r/5)$, since they are contained in two different connected components of   $\overline B(x_0,r/5)\setminus \{y \in \R^2: \; |(y-x_0)_2|>\beta(x_0,r)r\}$.
Thus, according to Lemma~~\ref{card2} applied in $B(x_0,r/5)$, we can indeed find $s\in (r/40,r/5)$ such that $\#( K \cap \partial B(x_0,s))=2$, and we denote by $z$ and $z'$ both points of $ K \cap \partial B(x_0,s)$. 
 
Let now $\eta \in(0,10^{-4})$ be fixed. Let $U$ be the rectangle, satisfying $\overline B(x_0,r/5)\subset U \subset  B(x_0,r)$, and $\Sigma$ be the wall set, satisfying $K\cap \partial U\subset \Sigma \subset \partial U$ and  $\mathcal{H}^1(\Sigma)\leq 120\eta r$, given by Lemma~\ref {extLem} in $B(x_0,r)$ for $\eta \in (0,10^{-4})$ fixed above.

Consider the set
$$K':=[z,z']\cup ( K  \setminus B(x_0,s)).$$
By construction $K' \setminus U=K \setminus U$  and  $D^\pm(x_0,r/5)$ are contained in different connected components of $U \setminus K'$. Then, Lemma~\ref {extLem} provides a function $v \in  H^1(\Omega\setminus K';\R^2)$ which coincides with $u$ on $(\Omega\setminus U)\setminus \Sigma$ and satisfies \eqref{ext_rect}. The pair $(v,K'')$, with
$$K'':=K'\cup \Sigma,$$
is thus a competitor for $(u,K)$, and by minimality of $(u,K)$ we have that
%$$\mathcal G(u,K;\overline B(x_0,r)) \leq \mathcal G(v,K'';\overline B(x_0,r)),$$
%or still, 
$$\int_{B(x_0,r)\setminus K} \mathbf Ae(u):e(u) \, dx + \mathcal{H}^1(K\cap \overline B(x_0,r))
\leq \int_{B(x_0,r) \setminus K''} \mathbf A e(v):e(v) \, dx  + \mathcal{H}^1(K''\cap \overline B(x_0,r)) .$$
Since $u=v$ outside of $U$ and $K'' \cap \partial B(x_0,r)=K \cap \partial B(x_0,r)$,  
we deduce by \eqref{ext_rect} that 
\begin{eqnarray*}
\mathcal{H}^1(K\cap B(x_0,r))&\leq&   \mathcal{H}^1(K''\cap B(x_0,r)) +\frac{C }{\eta^6}r \omega(x_0,r)\\
&\leq & \mathcal{H}^1(  K \cap B(x_0,r) \setminus B(x_0,s))+|z-z'|+ \mathcal{H}^1(\Sigma)+ \frac{C }{\eta^6}r\omega(x_0,r).
\end{eqnarray*}
Since  $\mathcal H^1(\Sigma)\leq 120 \eta r$  we get that
$$\mathcal{H}^1(K\cap B(x_0,s))\leq |z-z'|+ 120\eta r+ \frac{C }{\eta^6}r\omega(x_0,r).$$
Finally, we use that $\eta >0$ was assumed to be arbitrary in the interval $(0,10^{-4})$. Since $\omega(x_0,r)\leq 10^{-28}$ by assumption, we can choose $\eta=\omega(x_0,r)^{\frac{1}{7}}\leq 10^{-4}$, so that
$$\mathcal{H}^1(K\cap B(x_0,s))\leq |z-z'|+ Cr\omega(x_0,r)^{\frac{1}{7}},$$
as required. 
\end{proof}

We are now ready to prove the main flatness estimate.

\begin{proof}[Proof of Proposition~\ref {main3}]
Let us define
\begin{equation}\label{eq:eps}
\varepsilon_1 = \min\left\{\varepsilon'_0,\; \left(\frac{\min(1,\theta_0)}{400 C'}\right)^{7}\right\},
\end{equation}
where $\varepsilon'_0>0$ is the threshold of Proposition~\ref {main1} and $C'>0$ is the universal constant in \eqref{eq:flat-est}. By Proposition~\ref {main1}, we know that there exists $s \in (r/40,r/5)$ such that $K \cap \partial B(x_0,s)=\{z,z'\}$, for some $z \neq z'$, and
\begin{equation}\label{eq:flat-estE}
\mathcal{H}^1( K\cap B(x_0,s)) \leq   |z-z'|+C' r\omega(x_0,r)^{\frac{1}{7}}.  
\end{equation}
Notice that
$$\max\{|(z-x_0)_2|,|(z'-x_0)_2|\}\leq 40\varepsilon'_0 r,$$
since $\beta(x_0,s)\leq 40\varepsilon'_0$, and that
\begin{equation}
\mathcal{H}^1( K\cap B(x_0,s)) \leq  2r+C' r \omega(x_0,r)^{\frac{1}{7}} \leq 3r, \label{estimLe}
\end{equation}
because $C'\omega(x_0,r)^{\frac{1}{7}}\leq 1$ by \eqref{eq:eps}.
Let $L$ be the line passing through $x_0$ which is parallel to the segment $[z,z']$.

\medskip

{\bf Step 1.} We first prove that
\begin{equation}\label{eq:beta1}
\sup_{y \in K \cap \overline B(x_0,r/50)} {\rm dist}(y,L) \leq  C'' r \omega(x_0,r)^{\frac{1}{14}},
\end{equation}
where $C''>0$  only depends on $\theta_0>0$.

Since $K \cap B(x_0,s)$ separates $D^\pm(x_0,s)$ in $\overline B(x_0,s)$, by Lemma~\ref {existGamma} there exists an injective Lipschitz curve $\Gamma \subset K\cap \overline B(x_0,s)$ joining $z$ and $z'$. 
Being $\mathcal H^1(\Gamma) \geq |z-z'|$, according to estimate \eqref{eq:flat-estE} we have
\begin{equation}\label{eq:leq}
\mathcal H^1(K \cap B(x_0,s) \setminus \Gamma)  \leq  \mathcal H^1( K \cap B(x_0,s))-\mathcal H^1(\Gamma) \leq  C'r \omega(x_0,r)^{\frac{1}{7}}.
\end{equation}

We claim that for all $y \in K \cap \overline B(x_0,r/50)$,
\begin{equation}\label{eq:distGamma}
{\rm dist}(y,\Gamma)\leq \frac{2C'}{\theta_0}r \omega(x_0,r)^{\frac{1}{7}}.
\end{equation}
Indeed, assume by contradiction that there exists $y_0 \in K \cap \overline B(x_0,r/50)$ such that ${\rm dist}(y_0,\Gamma) > \delta r$, with $\delta=\frac{2C'}{\theta_0}r\omega(x_0,r)^{\frac{1}{7}}$.
According to condition \eqref{eq:eps} we have that $\delta < \frac{1}{200}$, so that 
$$B(y_0,\delta r) \subset B(x_0,r/40) \setminus \Gamma\subset B(x_0,s) \setminus \Gamma.$$ 
By Ahlfors regularity of $K$, 
$$\mathcal H^1(K \cap B(x_0,s) \setminus \Gamma) \geq \mathcal H^1(K \cap B(y_0,\delta r))\geq \theta_0 \delta r = 2 C'\omega(x_0,r)^{\frac{1}{7}}r,$$
which is in contradiction with \eqref{eq:leq} and establishes the validity of the claim \eqref{eq:distGamma}.

Now, an application of Lemma~\ref {pythag} ensures that for all $w \in \Gamma$
\begin{eqnarray}
{\rm dist}(w,[z,z'])^2 &\leq& \mathcal{H}^1(\Gamma) \Big(\mathcal{H}^1(\Gamma)-|z'-z|\Big) \notag \\
&\leq & \mathcal{H}^1(K\cap B(x_0,s)) \Big(\mathcal{H}^1(K\cap B(x_0,s))-|z'-z|\Big) \notag \\
&\leq &  3C' r^2\omega(x_0,r)^{\frac{1}{7}}, \label{dist11}
\end{eqnarray}
by \eqref{estimLe} and \eqref{eq:flat-estE}.

According to \eqref{eq:distGamma}, \eqref{dist11}, and the triangle inequality, we infer that for all $y \in K \cap \overline  B(x_0,r/50)$
\begin{equation}\label{yzz}
{\rm dist}(y,[z,z']) \leq \sqrt{3C'} r\omega(x_0,r)^{\frac{1}{14}}+\frac{2C'}{\theta_0}r \omega(x_0,r)^{\frac{1}{7}}\leq \tilde C'r\omega(x_0,r)^{\frac{1}{14}},
\end{equation}
with $\tilde C'>0$ depending on $\theta_0$, where we used that $\omega(x_0,r)\leq 1$ to estimate $\omega(x_0,r)^{\frac{1}{7}}\leq \omega(x_0,r)^{\frac{1}{14}}$. Finally, if $L$ denotes the line passing through $x_0$ which is parallel to the segment $[z,z']$, we deduce that \eqref{eq:beta1} holds by the triangle inequality and \eqref{yzz} applied to $x_0 \in L\cap K \cap \overline B(x_0,r/50)$.

\medskip

{\bf Step 2. } We now prove that 
\begin{equation}\label{eq:beta2}
\sup_{x \in L \cap \overline B(x_0,r/50)} {\rm dist}(x,K) \leq  C''' r \omega(x_0,r)^{\frac{1}{14}},
\end{equation}
where $C'''>0$ possibly depends on $\theta_0$.
For this purpose, we recall that $K$ separates $D^\pm(x_0,r)$ in  $\overline B(x_0,r)$, thus in particular, for every $x\in L\cap  \overline B(x_0,r/50)$, the orthogonal line to $L$ passing through $x$ meets a point $y\in K$. If $y \in \overline B(x_0,{r}/{50})$, then by Step 1 we know that $|x-y|\leq C''  \omega(x_0,r)^{\frac{1}{14}}r$, and then 
$${\rm dist}(x,K) \leq  C'' r\omega(x_0,r)^{\frac{1}{14}}.$$
Now, if $y\not\in \overline B(x_0,{r}/{50})$, this is only possible for  $x$ very close to $\partial B(x_0,r/50)$, because $K\cap B(x_0,r)$ is contained in  a strip around $L$ of height $C'' r\omega(x_0,r)^{\frac{1}{14}}$, which is small. More precisely, one sees using Pythagoras Theorem that the second case occurs only for points $x\in L$ satisfying
$${\rm dist}(x, \partial B(x_0,r/50))\leq   \frac{r}{50}-\left(\left(\frac{r}{50}\right)^2-\left(C''r\omega(x_0,r)^{\frac{1}{14}}\right)^2\right)^{\frac{1}{2}}\leq rMC''\omega(x_0,r)^{\frac{1}{14}},$$
where $M>0$ is a universal constant obtained from the elementary inequality 
$$\frac{1}{50}-\left(\left(\frac{1}{50}\right)^2-t^2\right)^{\frac{1}{2}}\leq Mt \quad \text{ for all }0<t<10^{-3},$$
which results from the mean value theorem. By the triangle inequality we then obtain
$${\rm dist}(x,K) \leq  (M+1)C'' r \omega(x_0,r)^{\frac{1}{14}}.$$
Gathering \eqref{eq:beta1} and \eqref{eq:beta2}, and using \eqref{optimal}, we deduce that $\beta(x_0,r/50) \leq C_1 r \omega(x_0,r)^{\frac{1}{14}}$ for some constant $C_1>0$ depending on $\theta_0$, which concludes the proof of the proposition.
\end{proof}

%%%%%%%%%%%%%%%%%%%%%%%%%%%%%%%%%%%%%%%%%%%%%%%

\section{Proof of the normalized energy decay}\label{sec6}

%%%%%%%%%%%%%%%%%%%%%%%%%%%%%%%

In this section we prove a decay estimate for the normalized energy of a Griffith minimizer. The strategy is based  upon a  compactness argument and a $\Gamma$-convergence type analysis where one shows the stability of the Neumann problem in planar elasticity along a sequence of sets $K_n$ which converge in the Hausdorff sense to a diameter within a ball. It gives an alternative approach even for the scalar case (albeit only 2-dimensional and under topological conditions) to the corresponding decay estimates of the normalized energy in the standard proofs of regularity for the Mumford-Shah minimizers (\cite{afp}, \cite[Theorem 1.10]{l3}). 

We will start establishing some auxiliary results on the Airy function.

\subsection{The Airy function} 

We  state here a general result concerning the  existence of the Airy function associated to a minimizer of the Griffith energy. It follows a construction similar to that in \cite[Proposition~4.3]{BCL}, itself inspired by that introduced in \cite{cdens}. The Airy function will be useful in order to get compactness results along a sequence of minimizers. The main difference with the situation in \cite{BCL} is that now $K$ is not assumed to be connected. {The proofs are very similar to those of \cite{BCL}, and for that reason we do not write all the arguments but only point out the main changes with respect to the original proof.}

\medskip

First we recall the following result coming from De Rham's Theorem and proved in \cite[Lemma 4.1]{BCL} in the case where $\Omega$ is a ball. The extension to a general bounded open set with Lipschitz boundary is straightforward, since the only property used in that proof is the existence of traces of Sobolev functions on the boundary.

\begin{lemma}\label{XY} 
Let $\Omega \subset \R^2$ be a bounded open set with Lipschitz boundary and let $L \subset \overline{ \Omega}$ be a  closed set. Let us consider the following subspaces of $L^2(\Omega;\R^2)$
\begin{eqnarray}
X_{L}&:=&\{\sigma \in \mathcal{C}^\infty(\overline{\Omega};\R^2) \,:\, {\rm supp}(\sigma)\cap L =\emptyset ,\; {\rm div}\sigma=0 \text{ in }  \Omega\}, \notag \\
Y_{L}&:=&\{\nabla v \; : \; v\in H^1(\Omega\setminus  L) \;,\; v=0 \text{ on }  \partial \Omega\setminus L \}.\notag
\end{eqnarray} 
Then $\overline{X_{L}}=Y_{ L}^{\bot}$ in $L^2(\Omega;\R^2)$.
\end{lemma}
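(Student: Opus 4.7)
The strategy is to establish the equality of two closed subspaces of $L^2(\Omega;\R^2)$ by proving both inclusions. The easy direction $\overline{X_L}\subset Y_L^\perp$ will come from a direct integration by parts; since $Y_L^\perp$ is closed, it suffices to show $X_L\subset Y_L^\perp$. Given $\sigma\in X_L$ and $\nabla v\in Y_L$, I would note that $\mathrm{supp}(\sigma)$ lies in a compact set of $\overline{\Omega}\setminus L$, so $\sigma\cdot\nabla v$ is unambiguously defined despite possible jumps of $v$ across $L$. Applying Green's formula in $\Omega\setminus L$ (working inside a smooth neighborhood of $\mathrm{supp}(\sigma)$ that avoids $L$) yields
\[
\int_\Omega \sigma\cdot\nabla v\,dx \;=\; -\int_\Omega v\,\mathrm{div}\,\sigma\,dx + \int_{\partial\Omega} v\,(\sigma\cdot\nu)\,d\mathcal{H}^1,
\]
in which the bulk term vanishes since $\mathrm{div}\,\sigma=0$, and the boundary term vanishes because $v=0$ on $\partial\Omega\setminus L$ (definition of $Y_L$) while $\sigma=0$ near $\partial\Omega\cap L$ (support condition).

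For the reverse inclusion, I would invoke the Hilbert-space identity $\overline{A}=(A^\perp)^\perp$ to reduce $Y_L^\perp\subset\overline{X_L}$ to showing $X_L^\perp\subset\overline{Y_L}$. Take $\tau\in L^2(\Omega;\R^2)$ orthogonal to every $\sigma\in X_L$. Testing first against arbitrary compactly supported divergence-free $\sigma\in\mathcal{C}^\infty_c(\Omega\setminus L;\R^2)$, the classical De Rham theorem produces, in each connected component of $\Omega\setminus L$, a distribution $v$ such that $\tau=\nabla v$. Since $\tau\in L^2$, after choosing additive constants appropriately in each component we obtain $v\in H^1(\Omega\setminus L)$ with $\nabla v=\tau$, and in particular $\|\nabla v\|_{L^2(\Omega\setminus L)}=\|\tau\|_{L^2(\Omega)}$.

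It remains to adjust the constants so that $v=0$ on $\partial\Omega\setminus L$ (in the trace sense), which is where the full class $X_L$ enters. For any $\sigma\in X_L$, using the orthogonality relation and integrating by parts as in the easy direction gives
\[
0 \;=\; \int_\Omega \tau\cdot\sigma\,dx \;=\; \int_\Omega \nabla v\cdot\sigma\,dx \;=\; \int_{\partial\Omega} v\,(\sigma\cdot\nu)\,d\mathcal{H}^1.
\]
By varying $\sigma\in X_L$, the normal traces $\sigma\cdot\nu$ on $\partial\Omega\setminus L$ form a sufficiently rich family (any smooth function with vanishing integral on each component of $\partial\Omega$ can be realized as such a trace by solving an auxiliary divergence equation away from $L$), and this forces $v$ to be constant on each connected component of $\partial\Omega\setminus L$. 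Subtracting these constants in the corresponding components of $\Omega\setminus L$ (and fixing $v\equiv 0$ in any inner components that do not touch $\partial\Omega\setminus L$), we obtain $v\in H^1(\Omega\setminus L)$ with $v=0$ on $\partial\Omega\setminus L$, hence $\tau=\nabla v\in Y_L$, concluding that $X_L^\perp\subset Y_L\subset\overline{Y_L}$.

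The main obstacle is the construction of the global potential $v$ on $\Omega\setminus L$ and the correct identification of its boundary values, because $L$ is merely closed, so $\Omega\setminus L$ can be highly irregular, possibly with infinitely many connected components, some of which may not touch $\partial\Omega\setminus L$. The density step producing $\sigma\cdot\nu$ as an arbitrary admissible trace, together with the bookkeeping of additive constants across components, must be handled with care. The rest of the argument is a standard combination of De Rham's theorem with Green's formula, already carried out in \cite{BCL} when $L$ is connected, and the extension to the general closed $L$ follows the same scheme with these topological precautions.
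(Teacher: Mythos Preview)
Your overall strategy---integration by parts for $X_L\subset Y_L^\perp$, and De~Rham's theorem followed by a boundary-trace argument for $X_L^\perp\subset Y_L$---is precisely the one indicated by the paper, which does not spell out a proof but refers to \cite[Lemma~4.1]{BCL} and observes that only the existence of traces of Sobolev functions on $\partial\Omega$ is needed to pass from a ball to a general Lipschitz domain.

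There is, however, a genuine error in your density claim for the normal traces. You assert that ``any smooth function with vanishing integral on each component of $\partial\Omega$ can be realized'' as $\sigma\cdot\nu$ with $\sigma\in X_L$. This is false whenever $L$ disconnects $\Omega$: applying the divergence theorem on each connected component $U_i$ of $\Omega\setminus L$, and using that $\sigma$ vanishes near $L$ (hence on $\partial U_i\setminus\partial\Omega$), forces
\[
\int_{\partial U_i\cap\partial\Omega}\sigma\cdot\nu\,d\mathcal H^1=0\qquad\text{for every }i,
\]
which is strictly stronger than a zero mean on each component of $\partial\Omega$. For instance, if $\Omega$ is a disk and $L$ a diameter, the normal trace must have zero mean on \emph{each} half-circle separately, not merely zero total mean. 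Your stated conclusion that $v$ is constant on each component of $\partial\Omega\setminus L$ therefore does not follow from your argument, and your subsequent sentence about ``subtracting these constants in the corresponding components of $\Omega\setminus L$'' presupposes a bijection between components of $\partial\Omega\setminus L$ and components of $\Omega\setminus L$ that need not exist.

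The repair is local but requires the correct bookkeeping: the admissible traces are (up to approximation) the smooth functions supported in $\partial\Omega\setminus L$ with zero integral over each $\partial U_i\cap\partial\Omega$, and orthogonality to this class forces the trace of $v$ to be a single constant $c_i$ on $\partial U_i\cap\partial\Omega$ (which may consist of several arcs). These $c_i$ are exactly the constants one is free to subtract from $v|_{U_i}$, and this yields $v\in Y_L$. You should also justify that every such admissible $g$ is indeed realized by some $\sigma\in X_L$, by solving $\mathrm{div}\,\sigma=0$ with prescribed normal trace separately in each $U_i$ and extending by zero across $L$.
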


From the previous Lemma, one can construct the ``harmonic conjugate'' $v$ associated to a minimizer $(u,K)$ of the Griffith functional. The proof follows the lines of that in \cite[Proposition 4.2]{BCL}. The main difference with \cite{BCL} is that, here, the singular set $K$ is not assumed to be connected. This implies that it is not in general possible to conclude that  $v$ vanishes on the full crack $K$. However, the following proof makes it possible to ensure that, in some suitable weak sense, $v$ is constant in each connected component of $K$, but the constants might depend on the associated connected component. This is the reason why we renormalize the harmonic conjugate $v$ to vanish only on an arbitrary connected component of the crack of positive length.

\medskip

\begin{proposition}[Harmonic conjugate]\label{prop:harm-conj}
Let $\Omega \subset \R^2$ be a bounded and simply connected open set with Lipschitz boundary, and let $(u,K) \in \mathcal A(\Omega)$ be a minimizer of the Griffith functional. Then, for every connected component $L$ of  $K$ with $\mathcal H^1(L)>0$, there exists a function $v \in H^1_{0,L}(\Omega;\R^2) \cap  \mathcal{C}^\infty(\Omega\setminus K;\R^2)$ such that 
\begin{equation}\label{eq:harm-conj}
\sigma:=\mathbf A e(u)=\left(\!\!\!\begin{array}{cc}
-\partial_2 v_1  &  \partial_1 v_1 \\
-\partial_2 v_2 & \partial_1 v_2
\end{array}\!\!\!\right) 
\quad \text{a.e.  in } \Omega.
\end{equation}
\end{proposition}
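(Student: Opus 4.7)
The proof will follow the strategy of \cite[Proposition~4.2]{BCL}, constructing the two components $v_1, v_2$ independently by applying Lemma~\ref{XY} to each row of the stress $\sigma := \mathbf A e(u)$. The first step is to rewrite the Euler--Lagrange equation \eqref{eq:var-form}: using the symmetry of $\sigma$ and choosing test fields of the form $(\varphi, 0)$ and $(0, \varphi)$, one obtains $\int_\Omega \sigma_i \cdot \nabla \varphi \, dx = 0$ for all $\varphi \in H^1(\Omega \setminus K)$ with $\varphi = 0$ on $\partial\Omega \setminus K$, where $\sigma_i := (\sigma_{i1}, \sigma_{i2})$ denotes the $i$-th row of $\sigma$. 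In the notation of Lemma~\ref{XY} with $L$ replaced by $K$, this says $\sigma_i \in Y_K^\perp = \overline{X_K}$, so there is a sequence of smooth divergence-free fields $\sigma_i^{(n)} \in X_K$, whose supports are disjoint from $K$, such that $\sigma_i^{(n)} \to \sigma_i$ in $L^2(\Omega; \R^2)$. Since $\Omega$ is simply connected, for each $n$ the Poincar\'e lemma produces $v_i^{(n)} \in \mathcal{C}^\infty(\overline\Omega)$, unique up to an additive constant, with $\partial_1 v_i^{(n)} = \sigma_{i2}^{(n)}$ and $\partial_2 v_i^{(n)} = -\sigma_{i1}^{(n)}$.

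The key point, and the main difference with respect to \cite{BCL} where $K$ is assumed connected, is the normalization of the additive constant. Since ${\rm supp}(\sigma_i^{(n)})$ and $K$ are compact subsets of $\overline\Omega$ at positive distance $d_n > 0$, the set $V_n := \{x \in \Omega : {\rm dist}(x, K) < d_n/2\}$ is an open neighborhood of $K \cap \Omega$ on which $\nabla v_i^{(n)}$ vanishes identically. Because $L$ is connected, it lies in a single connected component $W_n$ of $V_n$, and on $W_n$ the function $v_i^{(n)}$ is constant, equal to some $c_i^{(n)} \in \mathbb{R}$. Replacing $v_i^{(n)}$ by $v_i^{(n)} - c_i^{(n)}$ produces a smooth function on $\overline\Omega$ that vanishes on the open neighborhood $W_n$ of $L$, hence has support compactly contained in $\overline\Omega \setminus L$, so it belongs to $H^1_{0,L}(\Omega)$. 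Note that in the connected case treated in \cite{BCL} one could choose $W_n$ as a neighborhood of the entire crack and thereby assert $v = 0$ on all of $K$; here, only vanishing on the chosen component $L$ is available.

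The final step is the passage to the limit. Since $L$ is a continuum with $\mathcal H^1(L) > 0$, it has positive $(1,2)$-capacity, which yields a Poincar\'e inequality $\|w\|_{L^2(\Omega)} \leq C \|\nabla w\|_{L^2(\Omega)}$ for every $w \in H^1_{0,L}(\Omega)$. Applying this to the differences $v_i^{(n)} - v_i^{(m)} \in H^1_{0,L}(\Omega)$, whose gradients are $L^2$-Cauchy, one concludes that $\{v_i^{(n)}\}$ is itself Cauchy in $H^1(\Omega)$, with limit $v_i \in H^1_{0,L}(\Omega)$ satisfying $\partial_1 v_i = \sigma_{i2}$ and $\partial_2 v_i = -\sigma_{i1}$ almost everywhere, which is precisely \eqref{eq:harm-conj}. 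The smoothness of $v_i$ on $\Omega \setminus K$ is then immediate, since $\sigma$ itself is smooth there by $u \in \mathcal C^\infty(\Omega \setminus K; \R^2)$ and $\nabla v_i$ coincides with a rotated row of $\sigma$. The only genuine obstacle beyond \cite{BCL} is the normalization step above, which must be carried out on the single connected component of $V_n$ containing $L$, using only the connectedness of $L$ rather than of $K$ as a whole.
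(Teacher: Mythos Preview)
Your proof is correct and follows essentially the same approach as the paper's. The only cosmetic difference is that the paper applies Lemma~\ref{XY} directly with the connected component $L$ (noting that $Y_L\subset Y_K$, so $\sigma_i\in Y_K^\perp\subset Y_L^\perp=\overline{X_L}$), obtaining approximating fields whose supports avoid $L$; you instead apply it with $K$, obtain fields whose supports avoid all of $K$, and then normalize on the connected component of the resulting neighborhood containing $L$. Since $X_K\subset X_L$, your approximants are admissible for the paper's argument as well, and the normalization and Poincar\'e steps are identical.
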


\begin{proof} 
Let $L$ be a connected component of $K$ with $\mathcal H^1(L)>0$. According to the variational formulation \eqref{eq:var-form} and the fact that $\sigma(x) \in \mathbb M^{2 \times 2}_{\rm sym}$ for a.e. $x \in \Omega \setminus K$, for any $v \in H^1(\Omega \setminus K;\R^2)$ with $v=0$ on $\partial \Omega \setminus K$, we have
$$\int_\Omega \sigma : \nabla v\, dx=0.$$
This is {\it a fortiori} true for any $v \in H^1(\Omega\setminus L;\R^2)$ with $v=0$ on $\partial \Omega \setminus L$. Consequently, both lines of $\sigma$, denoted by 
$$\sigma^{(1)}:=\left(\!\!\!\begin{array}{c} \sigma_{11}\\ \sigma_{12} \end{array}\!\!\!\right), \quad \sigma^{(2)}:=\left(\!\!\!\begin{array}{c} \sigma_{12}\\ \sigma_{22} \end{array}\!\!\!\right),$$ 
belong to $Y_{L}^\perp$. Lemma~\ref {XY} ensures the existence of a sequence $(\sigma_n^{(1)}) \subset X_{L}$ such that $\sigma_n^{(1)} \to \sigma^{(1)}$ in $L^2(\Omega;\R^2)$. Since ${\rm div}\, \sigma_n^{(1)}=0$ in $\Omega$, $\Omega$ is simply connected and ${\rm supp}(\sigma_n^{(1)}) \cap {L}=\emptyset$, it follows  that 
$$\sigma_n^{(1)}=\nabla^\perp p_n^{(2)}:=\begin{pmatrix}
-\partial_2 p_n^{(2)}\\
\partial_1 p_n^{(2)}
\end{pmatrix},$$
for some $p_n^{(2)} \in \C^\infty(\ol \Omega)$ with ${\rm supp}(\nabla p_n^{(2)}) \cap {L}=\emptyset$. Since $L$ is connected, we can assume, up to add a constant, that $p_n^{(2)}=0$ on ${L}$. Consequently, since $\mathcal H^1(L)>0$, Poincar\'e's inequality implies that $p_n^{(2)} \to p^{(2)}$ in $H^1(\Omega)$ for some $p^{(2)} \in H^1_{0,L}(\Omega)$ satisfying $\sigma^{(1)}=\nabla^\perp p^{(2)}$. We prove similarly the existence of $p^{(1)} \in H^1_{0,{L}}(\Omega)$ satisfying $\sigma^{(2)}=-\nabla^\perp p^{(1)}$. We then define 
$$v:=\left(\!\!\!\begin{array}{c} p^{(2)} \\-p^{(1)} \end{array}\!\!\!\right)\in H^1_{0,L}(\Omega;\R^2)$$
which satisfies \eqref{eq:harm-conj}. Finally, since $\sigma \in \C^\infty(\Omega \setminus K;\Ms)$, then $v\in \C^\infty(\Omega \setminus K;\R^2)$.
\end{proof}

We next construct the Airy function $w$ associated to the displacement $u$ in $\Omega$ following an approach similar to \cite{BCL,cdens}, but once more with the difference that, here, $K$ is no more assumed to be connected. 

\begin{proposition}[Airy function]\label{prop:airy}
Let $\Omega \subset \R^2$ be a bounded and simply connected open set with Lipschitz boundary, and let $(u,K) \in \mathcal A(\Omega)$ be a minimizer of the Griffith functional. If  $L$ is a  connected component of $K$ such that $\mathcal H^1(L)>0$, then there exists a function $w \in  H^2(\Omega) \cap H^1_{0,L}(\Omega)$ such that 
\begin{equation}\label{eq:biharm}
\Delta^2w =0 \text{ in }\D'(\Omega \setminus K)
\end{equation}
and
\begin{equation}\label{eq:hessian}
\sigma=
\begin{pmatrix}
\partial_{22} w & -\partial_{12} w\\
-\partial_{12} w & \partial_{11} w
\end{pmatrix}
\quad \text{ a.e. in }\Omega.
\end{equation}
In addition, if $A \subset \R^2$ is an open set with $\overline A \subset \Omega$, then $w \in H^2_{0,{L}}(A)$.
\end{proposition}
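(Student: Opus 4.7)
The plan is to build $w$ from the harmonic conjugate $v$ provided by Proposition~\ref{prop:harm-conj}, by integrating once more. The starting observation is that the symmetry of $\sigma = \mathbf A e(u)$ plugged into \eqref{eq:harm-conj} forces $\partial_1 v_1 + \partial_2 v_2 = 0$, so $v$ is distributionally divergence-free on all of $\Omega$. Consequently the rotated field $v^* := (v_2,-v_1) \in H^1(\Omega;\R^2)$ is curl-free, since for every $\varphi \in \mathcal C^\infty_c(\Omega)$,
\[
\int_\Omega v^* \cdot \nabla^\perp \varphi\, dx = -\int_\Omega v \cdot \nabla \varphi \, dx = \int_\Omega ({\rm div}\,v)\,\varphi\, dx = 0.
\]
Because $\Omega$ is simply connected, a de Rham argument parallel to Lemma~\ref{XY} (with the roles of $\nabla$ and $\nabla^\perp$ interchanged) produces $w \in H^2(\Omega)$, unique up to an additive constant, with $\nabla w = v^*$. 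Expanding $D^2 w = \nabla v^*$ entrywise and using $\partial_2 v_2 = -\partial_1 v_1$ yields \eqref{eq:hessian} immediately.

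The biharmonic identity is then a short computation. Elliptic regularity provides $u \in \mathcal C^\infty(\Omega\setminus K;\R^2)$, hence $v,w \in \mathcal C^\infty(\Omega\setminus K)$. Taking the trace of \eqref{eq:hessian} in $\Omega\setminus K$ gives $\Delta w = \sigma_{11}+\sigma_{22} = 2(\lambda+\mu)\,{\rm div}\,u$, while the Lam\'e form of the equilibrium equation ${\rm div}\,\sigma=0$, namely $(\lambda+\mu)\nabla({\rm div}\,u)+\mu\Delta u=0$, when differentiated once more yields $(\lambda+2\mu)\Delta({\rm div}\,u)=0$. Since $\lambda+2\mu>0$ one obtains $\Delta({\rm div}\,u)\equiv 0$ in $\Omega\setminus K$, so $\Delta^2 w=0$ in $\mathcal D'(\Omega\setminus K)$.

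The hard part is the boundary behavior of $w$ on $L$. Its gradient $\nabla w = v^*$ already lies in $H^1_{0,L}(\Omega;\R^2)$, so both $\partial_i w$ admit $(1,2)$-quasicontinuous representatives vanishing $(1,2)$-q.e.\ on $L$. Exploiting that $L$ is a connected continuum of positive $\mathcal H^1$-measure, hence the image of an injective Lipschitz curve by \cite[Proposition 30.1]{d}, one integrates $\nabla w \equiv 0$ along such a parametrization to conclude that $w$ is $(1,2)$-q.e.\ constant on $L$; subtracting that constant places $w$ in $H^1_{0,L}(\Omega)$, which is exactly where the connectedness of $L$ enters. For the localized assertion $w\in H^2_{0,L}(A)$ when $\overline A\subset\Omega$, rather than re-doing the normalization directly at the $(2,2)$-capacity level (which is delicate because $(2,2)$-q.e.\ vanishing is a strictly stronger requirement than $(1,2)$-q.e.\ vanishing), the cleanest route is to rerun the de Rham argument at the level of the smooth approximating sequences used to construct $v$: starting from a divergence-free smooth approximation of $\sigma$ with support away from $L$ as in the proof of Proposition~\ref{prop:harm-conj}, I would lift each step first to a curl-free field and then to a smooth potential normalized to vanish near $L$ (possible thanks to the connectedness of $L$), producing functions in $\mathcal C^\infty_c(\overline A\setminus L)$ which converge to $w$ in $H^2(A)$; the hypothesis $\overline A\subset\Omega$ is precisely what allows this cutoff to be performed safely away from $\partial\Omega$. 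Making this capacity-level normalization rigorous, and ensuring that the normalization constants at each step converge, is the principal obstacle of the proof.
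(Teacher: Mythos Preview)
Your construction of $w$ and the biharmonicity argument are essentially what the paper does. The paper phrases the first step slightly differently: rather than invoking a generic de Rham lemma on $v^*$, it observes that $(-p^{(2)},p^{(1)})$ (your $-v^*$) belongs to $Y_L^\perp=\overline{X_L}$ and reapplies Lemma~\ref{XY} exactly as in the proof of Proposition~\ref{prop:harm-conj}. This yields $w\in H^1_{0,L}(\Omega)$ \emph{directly} from the construction, with no need to integrate $\nabla w$ along a parametrization of $L$.

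The genuine gap is in your strategy for $w\in H^2_{0,L}(A)$. Your plan to ``rerun the de Rham argument at the level of the smooth approximating sequences'' breaks down: the approximations $\sigma_n^{(1)},\sigma_n^{(2)}\in X_L$ of the two rows of $\sigma$ are produced independently and need not form a \emph{symmetric} matrix, so after lifting to $p_n^{(1)},p_n^{(2)}$ the vector $(p_n^{(1)},p_n^{(2)})$ is in general not curl-free and cannot be lifted a second time to a smooth $w_n$. Even if one instead takes a new sequence $q_n\in X_L$ approximating $(-p^{(2)},p^{(1)})$ (as the paper does to get $w\in H^1_{0,L}$), one only gets $q_n\to(-p^{(2)},p^{(1)})$ in $L^2$, hence $w_n\to w$ in $H^1$, not in $H^2$; so no sequence in $\mathcal C^\infty_c(\overline A\setminus L)$ converging to $w$ in $H^2(A)$ comes out of this.

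The paper's route is entirely different and avoids building such a sequence. One uses the characterization \cite[Theorem~9.1.3]{AH}: after cutting off with $\eta\in\mathcal C^\infty_c(\Omega)$, $\eta=1$ on $A$, the function $z=\eta w$ lies in $H^2_0(\Omega\setminus L)$ provided a $(2,2)$-quasicontinuous representative of $z$ vanishes $(2,2)$-q.e.\ on $\partial(\Omega\setminus L)$ and a $(1,2)$-quasicontinuous representative of $\nabla z$ vanishes $(1,2)$-q.e.\ there. The gradient condition is immediate from $\nabla w\in H^1_{0,L}(\Omega;\R^2)$. For $z$ itself, note that in two dimensions $H^2\hookrightarrow C^0$ and the only set of zero $(2,2)$-capacity is $\emptyset$, so one must show $z=0$ \emph{everywhere} on $\partial\Omega\cup L$. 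Since $z\in H^1_0(\Omega\setminus L)$, the zero set $F=\{z=0\}$ misses $\partial\Omega\cup L$ only on a set of zero $(1,2)$-capacity; each connected component of $(\partial\Omega\cup L)\setminus F$ then has zero diameter (connected sets of positive diameter have positive $(1,2)$-capacity), hence is a point, and is isolated because $F$ is closed. But neither $\partial\Omega$ nor $L$ has isolated points, so $F=\partial\Omega\cup L$ and $z\in H^2_0(\Omega\setminus L)$, giving $w\in H^2_{0,L}(A)$. This capacity/topology argument is the missing idea.
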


\begin{proof}
Proposition~\ref {prop:harm-conj} ensures the existence of $p^{(1)}$ and $p^{(2)} \in  H^1_{0,L}(\Omega)$ such that
$$\sigma^{(1)}=\nabla^\perp p^{(2)} , \quad \sigma^{(2)}=-\nabla^\perp p^{(1)} \qquad \text{ a.e. in }\Omega.$$
Since $p^{(1)}=p^{(2)}=0$ on $L$, arguing as in \cite[Proposition 4.3]{BCL}, it follows that 
$$\left(\!\!\!\begin{array}{c}-p^{(2)}\\p^{(1)} \end{array}\!\!\!\right) \in Y_{{L}}^\perp=\ol X_{L},$$
owing again to Lemma~\ref {XY}. Next, arguing as in the proof of Proposition~\ref {prop:harm-conj}, we deduce the existence of a function $w \in H^1_{0,{L}}(\Omega)$ such that 
$$\nabla w=\left(\!\!\!\begin{array}{c}p^{(1)}\\p^{(2)} \end{array}\!\!\!\right) \in L^2(\Omega;\R^2).$$
By construction, the Airy function $w$ satisfies \eqref{eq:hessian} and, arguing as in \cite[Proposition 4.3]{BCL}, it also satisfies \eqref{eq:biharm}. 
	
It remains to show that if $A \subset \R^2$ is an open set with $\overline A \subset \Omega$, then $w \in H^2_{0,L}(A)$.
 We first note that $w \in H^1_{0,L}(\Omega) \cap H^2(\Omega)$ with $\nabla w \in H^1_{0,L}(\Omega;\R^2)$. In particular, since $w \in H^2(\Omega)$, it has a (H\"older) continuous representative, still denoted $w$, so that it makes sense to consider its pointwise values. 
 
Since $A\setminus L$ is not smooth, in order to show that $w \in H^2_{0,L}(A)$, we will use a capacity argument similar to that used in \cite[Proposition 4.3]{BCL} and in \cite[Theorem 1]{cdens}.
 
Let us consider a cut-off function $\eta \in \C^\infty_c(\Omega;[0,1])$ satisfying $\eta=0$ on $\partial \Omega$ and $\eta=1$ on $A$. Denoting by $z:=\eta w$, then $z \in H^2(\Omega) \cap H^1_0(\Omega\setminus L)$ and $\nabla z \in H^1_0(\Omega \setminus L;\R^2)$. 
 
By \cite[Theorem 9.1.3]{AH}, $z\in H^2_0(\Omega\setminus L)$ if a $\Ccap_{2,2}$-quasicontinuous representative of $z$ vanishes on $\partial(\Omega\setminus L)$ $\Ccap_{2,2}$ q.e., and a $\Ccap_{1,2}$-quasicontinuous representative of $\nabla z$ vanishes on $\partial(\Omega\setminus L)$ $\Ccap_{1,2}$ q.e..

Since $\nabla z\in H^1_0(\Omega\setminus L)$, the second property is a consequence of \cite[Theorem 3.3.42]{HP}. As for the first property, since $z$ is continuous, it coincides with its $\Ccap_{2,2}$-quasicontinuous representative. Moreover, since the empty set is the only set of zero $\Ccap_{2,2}$-capacity, we are reduced to show that $z = 0$ everywhere on $\partial (\Omega \setminus L)$. 
Let $F:=\{x \in \partial (\Omega \setminus L) : z(x)=0\}$. Then, $F$ is a compact set satisfying $\Ccap_{1,2}(\partial (\Omega \setminus {L} )\setminus F)=0$, being $z\in H^1_0(\Omega\setminus L)$. Let $G$ be a connected component of $\partial (\Omega \setminus L)\setminus F$. Since a compact and connected set of positive diameter has a positive $\Ccap_{1,2}$-capacity (see \cite[Corollary 3.3.25]{HP}), we deduce that $\diam(G)=0$, so that $G$ is (at most) a singleton. Moreover, $F$ being compact, its complementary $ \partial (\Omega \setminus L)\setminus F$ is open in the relative topology of $ \partial (\Omega \setminus L)$, and thus $G$ is (at most) an isolated point. Finally, since $\partial(\Omega \setminus L)=\partial\Omega\cup L$, being $L\subset \overline{\Omega}$ closed and $\mathcal{H}^1(L)<+\infty$, and since neither $\partial\Omega$ or $L$ have isolated points, we have $G=\emptyset$, and thus $z=0$ on $\partial (\Omega \setminus L)$.
	
As a consequence of \cite[Theorem 9.1.3]{AH}, we conclude that $z \in H^2_0(\Omega \setminus L)$, or in other words, that there exists a sequence $(z_n) \subset \C^\infty_c(\Omega \setminus L)$ such that $z_n \to z=\eta w$ in $H^2(\Omega \setminus L)$. Note in particular that  $z_n \in \C^\infty(\ol{A})$ and that $z_n$ vanishes in a neighborhood of $L \cap \ol{A}$. Therefore, since $z =w$ and $\nabla z=\nabla w$ in $A$, we deduce that $w \in H^2_{0,L}(A)$.
\end{proof}

\begin{remark} \label{lastremark}
{\rm If $\Gamma \subset K\cap \Omega$ is a connected component of $K\cap \Omega$, then there exists a connected component $L$ of $K$ such that $\Gamma\subset L$. If we consider the Airy function given by Proposition~\ref{prop:airy} associated with this  component $L$, then  for all $A \subset \R^2$ open  with $\overline A \subset \Omega$ we have that  $w \in H^2_{0,\Gamma}(A)$ because $H^2_{0,L}(A)\subset H^2_{0,\Gamma}(A)$.}
\end{remark}

\subsection{Proof of Proposition~\ref {JFprop}}

We argue by contradiction by assuming that the statement of the proposition is false. Then, there exists $\tau_0>0$ such that for every $n \in \mathbb N$, one can find a minimizer  $(\hat u_n, \hat K_n) \in \mathcal A (\Omega)$ of the Griffith functional (with the same Dirichlet boundary data $\psi$), an isolated connected component $\hat \Gamma_n$ of ${\hat K_n\cap \Omega}$, points $x_n \in \hat \Gamma_n$, radii $r_n>0$ with $\overline B(x_n,r_n) \subset \Omega$  such that 
$$\hat K_n \cap \overline B(x_n,r_n)=\hat \Gamma_n \cap \overline B(x_n,r_n), \quad\beta_{\hat K_n}(x_n,r_n)\to 0,$$ 
and 
$$\omega_{\hat u_n}\left(x_n,a r_n \right) > \tau_0 \, \omega_{\hat u_n}(x_n,r_n),$$
for some $a \in (0,1)$ (to be fixed later).
	
\medskip

\noindent {\bf Rescaling and compactness.} In order to prove compactness properties on the sequences of sets and displacements, we need to rescale them into a unit configuration. For simplicity, from now on, we denote by $B:=B(0,1)$. Let us first rescale the sets $\hat K_n$ and $\hat \Gamma_n$ by setting, for all $n \in \mathbb N$,
$$K_n:=\frac{\hat K_n-x_n}{r_n}, \quad \Gamma_n:=\frac{\hat \Gamma_n-x_n}{r_n}.$$
Let $\hat{L}_n:=L(x_n,r_n)$ be an affine line such that
$$d_{\mathcal H}(\hat{L}_n \cap \overline B(x_n,r_n),\hat K_n\cap \overline B(x_n,r_n))\leq r_n \beta_{\hat K_n}(x_n,r_n),$$
and $L_n:=\frac{\hat L_n-x_n}{r_n}$ its rescaling. {Up to a subsequence, and up to a change of orthonormal basis, we can assume that $L_n \cap \overline B \to T \cap \overline B$ in the sense of Hausdorff, where $T:=\R e_1$.} Then, since
$$d_{\mathcal H}( L_n\cap \overline B, K_n\cap \overline B)=\frac{1}{r_n}d_{\mathcal H}(\hat L_n\cap \overline B(x_n,r_n),\hat K_n\cap \overline B(x_n,r_n))\leq \beta_{\hat K_n}(x_n,r_n) \to 0,$$
we deduce that $\Gamma_n \cap \overline B=K_n \cap \overline B \to  T \cap \overline B$ in the sense of Hausdorff. 	
	
We next rescale the displacements $\hat u_n$ by setting, for all $n \in \mathbb N$ and a.e. $y \in B$,
$$u_n(y):=\frac{\hat u_n(x_n+r_n y)}{\sqrt{\omega_{\hat u_n}(x_n,r_n) r_n}}.$$
Then, we have
\begin{equation}\label{eq:energy1}
\int_{B\setminus K_n} \mathbf A e(u_n):e(u_n)\, dx=1,
\end{equation}
\begin{equation}\label{eq:omegaun}
\omega_{u_n}\left(0,a\right) > \tau_0.
\end{equation}
Note that $u_n \in LD(B \setminus  K_n)$ is a solution of 
$$\inf\left\{\int_{B \setminus K_n} \mathbf A e(z):e(z)\, dx : \; z \in LD(B \setminus K_n),\, z=u_n \text{ on }\partial B \setminus K_n \right\},$$
and in particular,
\begin{equation}\label{eq:minvn}
\int_{B \setminus K_n}\mathbf A e(u_n):e( u_n)\, dx \leq \int_{B \setminus  K_n}\mathbf A  e(u_n+\varphi) :e(u_n+\varphi)\, dx
\end{equation}
for all $\varphi \in LD(B \setminus K_n)$ with $\varphi=0$ on $\partial B \setminus K_n$.
	
According to the energy bound \eqref{eq:energy1}, up to a subsequence, we have 
$$e( u_{n}) {\bf 1}_{B \setminus   K_{n}}\rightharpoonup e \quad \text{ weakly in }L^2(B;\mathbb M^{2\times 2}_{\rm sym})$$
for some  $e \in L^2(B;\mathbb M^{2\times 2}_{\rm sym})$.
We next show that $e$ is  the symmetrized gradient of some displacement. To this aim, we consider, for any $0<\delta<1/10$, the Lipschitz domain
$$A_\delta:=\{x\in B : {\rm dist}(x,T)>\delta\} = A_\delta^+ \cup A_\delta^-,$$
where $A_\delta^\pm$ are both connected components of $A_\delta$. Note that for such $\delta$, $D^\pm := B\Big((0,\pm \frac{3}{4}) ,\frac{1}{4})\Big) \subset A^\pm_\delta$ and $K_n \cap U_\delta = \emptyset$ for $n$ large enough (depending on $\delta$). Denoting by 
$$r_n^\pm(x):=\frac{1}{|D^\pm|}\int_{D^\pm} u_n(y) \,dy +\left(\frac{1}{|D^\pm|}\int_{D^\pm} \frac{\nabla u_n(y)-\nabla u_n(y)^{T}}{2}dy\right)\left(x-\frac{1}{|D^\pm|}\int_{D^\pm} y \,dy\right),$$
the rigid body motion associated to $u_n$ in $D^\pm$, by virtue of the Poincar\'e-Korn inequality \cite[Theorem 5.2 and Example 5.3]{AMR}, we get that 
$$\|u_n-r^\pm_n\|_{H^1(A^\pm_\delta;\R^2)}\leq c_\delta \| e(u_n)\|_{L^2(A^\pm_\delta;\mathbb M^{2\times 2}_{\rm sym})},$$
for some constant $c_\delta>0$ depending on $\delta$. Thanks to a diagonalisation argument, for a further subsequence (not relabeled), we obtain a function $v \in LD(B \setminus T)$ such that $u_n-r^\pm_n \rightharpoonup v$ weakly in $H^1(A^\pm_\delta;\R^2)$, for any $0<\delta<1/10$. Necessarily we must have that $e=e(v)$ and thus,
$$e(u_n){\bf 1}_{B\setminus K_n} \rightharpoonup e(v) \quad \text{ weakly in }L^2(B;\mathbb M^{2 \times 2}_{\rm sym}).$$
	
\medskip
	
\noindent {\bf Minimality.} We next show that $v$ satisfies the minimality property
$$\int_{B \setminus T} \mathbf A e(v):e(v)\, dx \leq \int_{B \setminus T} \mathbf A e(v+\varphi):e(v+\varphi)\, dx$$
for all $\varphi \in LD(B \setminus T)$ such that $\varphi=0$ on $\partial B \setminus T$. According to \cite[Theorem 1]{cdens}, it is enough to consider competitors $\varphi \in H^1(B \setminus T;\R^2)$ such that $\varphi=0$ on $\partial B \setminus T$.
	
For a given arbitrary competitor $\varphi \in H^1(B \setminus T;\R^2)$ such that $\varphi=0$ on $\partial B \setminus T$, we construct a sequence of competitors for the minimisation problems \eqref{eq:minvn} using a jump transfert type argument (see \cite{fl} and \cite{BCL}). To this aim,  we denote by $C_n^\pm$ the connected component of $B \setminus K_n$ which contains the point $(0,\pm 1/2)$, and we define $\varphi_n$ as follows
\begin{itemize}
\item $\varphi_n(x_1,x_2)=\varphi(x_1,x_2)$ if $(x_1,x_2) \in [C_n^+ \cap \{x_2 \geq 0\}] \cup [C_n^- \cap \{x_2 \leq 0\}]$;
\item $\varphi_n(x_1,x_2)=\varphi(x_1,-x_2)$ if $(x_1,x_2) \in [C_n^+ \cap \{x_2 < 0\}] \cup [C_n^- \cap \{x_2 > 0\}]$;
\item $\varphi_n(x_1,x_2)=0$ otherwise.
\end{itemize}
Then, one can check that $\varphi_n \in H^1(B \setminus K_n;\R^2)$ and $\varphi_n=0$ on $\partial B \setminus K_n$. 
Moreover, $\varphi_n \to \varphi$ strongly in $L^2(B;\R^2)$ and $e(\varphi_n){\bf 1}_{B \setminus K_n}  \to e(\varphi)$ strongly in $L^2(B;\mathbb M^{2 \times 2}_{\rm sym})$. Therefore, thanks to the minimality property satisfied by $u_n$, we infer that
$$\int_{B \setminus K_n} \mathbf A e(u_n):e(u_n)\, dx \leq \int_{B \setminus K_n} \mathbf A e(u_n+\varphi_n):e(u_n+\varphi_n)\, dx,$$
which implies, by expanding the squares, that
$$0 \leq 2 \int_{B \setminus K_n} \mathbf A e(u_n):e(\varphi_n)\, dx +  \int_{B \setminus K_n} \mathbf A e(\varphi_n):e(\varphi_n)\, dx.$$
Using that $e(\varphi_n){\bf 1}_{B \setminus K_n} \to e(\varphi)$ strongly in $L^2(B;\mathbb M^{2 \times 2}_{\rm sym})$ and $e(u_n){\bf 1}_{B \setminus K_n} \rightharpoonup e(v)$ weakly in $L^2(B;\mathbb M^{2 \times 2}_{\rm sym})$, we can pass to the limit as $n \to +\infty$ to get that
$$0 \leq 2 \int_{B \setminus T} \mathbf A e(v):e(\varphi)\, dx + \int_{B \setminus T} \mathbf A e(\varphi):e(\varphi)\, dx,$$
or still
$$\int_{B \setminus T} \mathbf A e(v):e(v)\, dx \leq \int_{B \setminus T} \mathbf A e(v+\varphi):e(v+\varphi)\, dx.$$
As a consequence, $v$ is a smooth function in $B\setminus T$. Moreover, due to the Korn  inequality in both connected components of $B\setminus T$ (which are Lipschitz domains), we get that $v \in H^1(B \setminus T;\R^2)$.

\medskip
	
\noindent {\bf Convergence of the elastic energy.} In order to pass to the limit in inequality \eqref{eq:omegaun}, we need to show the convergence of the elastic energy, or in other words, the strong convergence of the sequence of elastic strains $(e(u_n))_{n \in \mathbb N}$. This will be achieved by using Proposition~\ref {prop:airy} which provides an Airy function $\hat w_n$ associated to the displacement $\hat u_n$, satisfying $\hat w_n \in H^2(\Omega) \cap H^1_{0,\hat\Gamma_n} (\Omega) \cap H^2_{0,\hat \Gamma_n}(A)$ for all open set $A \subset \R^2$ with $\overline A \subset \Omega$, (see also Remark \ref{lastremark}) such that
$$\Delta^2 \hat w_n= 0 \quad \text{ in }\Omega \setminus \hat K_n,$$
and
$$\mathbf A e(\hat u_n)=
\left(
\begin{array}{cc}
\partial _{22} \hat w_n & -\partial_{12}\hat w_n \\
-\partial_{12}\hat w_n & \partial _{11} \hat w_n
\end{array}
\right) \quad \text{ in }{{}}\Omega.$$
Since $\hat K_n \cap \overline B(x_n,r_n)=\hat \Gamma_n \cap \overline B(x_n,r_n)$ and $\overline B(x_n,r_n) \subset \Omega$, we infer that $\hat w_n \in H^2_{0,\hat K_n}(B(x_n,r_n))$. 
We rescale the Airy function $\hat w_n$ by setting, for all $n \in \mathbb N$ and a.e. $y \in B$,
$$w_n(y):=\frac{\hat w_n(x_n+r_n y)}{\sqrt{\omega_{\hat u_n}(x_n,r_n) r_n}}$$
in such a way that$w_n \in H^2_{0,K_n}(B)$,
$$\Delta^2 w_n= 0 \quad \text{ in }B \setminus K_n,$$
and
$$\mathbf A e(u_n)=
\left(
\begin{array}{cc}
\partial _{22}  w_n & -\partial_{12} w_n \\
-\partial_{12} w_n & \partial _{11}  w_n
\end{array}
\right).$$
In addition, since
$$\int_B |D^2 w_n|^2 \, dx = \int_B |\mathbf A e(u_n)|^2\, dx \leq C \int_B | e(u_n)|^2\, dx \leq C,$$
then Poincar\'e's inequality ensures that the sequence $(w_n)_{n \in \mathbb N}$ is bounded in $H^2(B)$, and thus, up to a subsequence $w_n \rightharpoonup w$ weakly in $H^2(B)$, for some $w \in H^2(B)$. A similar capacity argument than that used in the proof of \cite[Proposition 6.1]{BCL} shows that $w \in H^2_{0,T}(B(0,r))$ for all $r<1$, and
$$\Delta^2 w= 0 \quad \text{ in } B \setminus T,$$
and
\begin{equation}\label{eq:airy}
\mathbf A e(u)=
\left(
\begin{array}{cc}
\partial _{22}  w & -\partial_{12} w \\
-\partial_{12} w & \partial _{11}  w
\end{array}
\right).\end{equation}
In addition, using that the biharmonicity of $w_n$ is equivalent to the minimality
$$\int_B |D^2  w_n|^2\, dx \leq \int_B |D^2 z|^2\, dx$$
for all $z \in w_n +H^2_{0,K_n}(B)$, we can again reproduce the proof of \cite[Proposition 6.1]{BCL} to get that $ w_n \to w$ strongly in $H^2(B(0,r))$ for all $r<1$. In particular, it implies that $e(u_n) {\bf 1}_{B \setminus K_n}\to e(v)$ strongly in $L^2(B(0,r);\mathbb M^{2 \times 2}_{\rm sym})$, and thus passing to the limit in inequalities \eqref{eq:energy1} and \eqref{eq:omegaun} yields
\begin{equation}\label{eq:contradiction}
\omega_v(0,1)\leq 1\quad \text{ and }\quad \omega_v\left(0,a\right) \geq \tau_0.
\end{equation}
	
According to inequality \eqref{eq:contradiction}, we infer that 
$$\text{either }\quad {\frac1a}\int_{B(0,a) \cap \{x_2 >0\}} \mathbf A e(v):e(v)\, dx \geq \frac{\tau_0}{2}\quad
\text{ or }\quad\frac1a\int_{B(0,a)\cap\{x_2<0\}}\mathbf A e(v):e(v)\, dx \geq \frac{\tau_0}{2}.$$
Without loss of generality, we assume that 
\begin{equation}\label{eq:omegav2}
\frac1a\int_{B(0,a) \cap \{x_2 >0\}} \mathbf A e(v):e(v)\, dx \geq \frac{\tau_0}{2}.
\end{equation}
	
\medskip
	
\noindent {\bf Decay of the elastic energy.} We finally want to show a decay estimate on the elastic energy which will give a contradiction to \eqref{eq:omegav2}. To this aim, denoting by $B^\pm=B \cap \{\pm x_2>0\}$, we will work on the Airy function $w$ to construct an extension of $v|_{B^+}$ on $B$ which still solves the elasticity system in $B$.  According to (formula (3.28) in) \cite{Poritsky} (see also \cite{Duffin}), since $w \in \mathcal C^\infty(B^+)$ is a solution of
$$\Delta^2 w=0 \text{ in }B^+, \quad w=0, \nabla w=0 \text{ on } B \cap \{x_2=0\},$$
we can consider the biharmonic reflexion  $\tilde w \in \mathcal C^\infty(B)$ of $w|_{B^+}$ in $B$ defined by 
$$\tilde w(x)=
\begin{cases}
w(x) & \text{ if } x \in B^+,\\
-w(x_1,-x_2)-2x_2\partial_{2} w(x_1,-x_2) - x_2^2\Delta w(x_1,-x_2) & \text{ if }x \in B^-,
\end{cases}$$
which satisfies $\Delta^2 \tilde w=0$ in $B$. Thanks to this biharmonic extension, we are going to extend the displacement $v|_{B^+}$ on the whole ball $B$ into a function $\tilde v$ which minimizes the elastic energy. To this aim, let us define the stress by
$$\tilde \sigma:=
\begin{pmatrix}
\partial_{22} \tilde w & - \partial_{12} \tilde w\\
- \partial_{12} \tilde w & \partial_{11} \tilde w
\end{pmatrix},$$
and the strain
$$\tilde e:=
\begin{pmatrix}
\tilde e_{11} & \tilde e_{12}\\
\tilde e_{12} & \tilde e_{22}
\end{pmatrix}
:=\mathbf A^{-1} \tilde \sigma,$$
with
$$\tilde e_{11}= \frac{\tilde \sigma_{11}}{E} - \frac{\nu}{E} \tilde \sigma_{22},\quad
\tilde e_{22} = \frac{\tilde \sigma_{22}}{E} - \frac{\nu}{E} \tilde \sigma_{11}\quad
\tilde e_{12} = \frac{1+\nu}{E} \tilde \sigma_{12}.$$
Note that ${\rm div}\tilde \sigma=0$ in $B$, and the compatibility condition 
$$\partial_{22}\tilde e_{11}+\partial_{11}\tilde e_{22}-2\partial_{12}\tilde e_{12}=0 \quad \text{ in } B$$ ensures the existence of some $\tilde v \in \mathcal C^\infty(B;\R^2)$ such that $\tilde e=e(\tilde v)$ in $B$. In particular, according to \eqref{eq:airy}, we have
$$\mathbf A e(v)=\begin{pmatrix}
\partial_{22}  w & - \partial_{12}  w\\
- \partial_{12}  w & \partial_{11}  w
\end{pmatrix}
=\begin{pmatrix}
\partial_{22} \tilde w & - \partial_{12} \tilde w\\
- \partial_{12} \tilde w & \partial_{11} \tilde w
\end{pmatrix}
=\mathbf A e(\tilde v) \quad \text{ in } B^+,$$
which shows that $e(\tilde v)=e(v)$ in $B^+$, and thus that $v$ and $\tilde v$ only differ from a rigid body motion in $B^+$. We have thus constructed an extension $\tilde v$ of $v|_{B^+}$ which satisfies $-{\rm div}(\mathbf A e(\tilde v))=0$ in $B$, or equivalently,
$$\int_B \mathbf A e(\tilde v):e(\tilde v)\, dx \leq \int_B \mathbf A e(\tilde v+\varphi):e(\tilde v+\varphi)\, dx$$
for all $\varphi \in LD(B)$ such that $v=0$ on $\partial B$. 
	
\medskip
	
According to \eqref{eq:omegav2}, we have that
\begin{equation}\label{eq:a1}
\frac1a\int_{B(0,a)} \mathbf A e(\tilde v):e(\tilde v)\, dx \geq \frac{\tau_0}{2}.
\end{equation}
Moreover, by standards decay energy estimates for elliptic systems (see e.g. \cite[Proposition 3.4]{CFI2}), we infer that for all $\gamma\in (0,2)$, there exists a constant $c_\gamma=c(\gamma,\mathbf A)>0$ such that for all $r \leq 1$,
$$\int_{B(0,r)}\mathbf A e(\tilde v):e(\tilde v)\, dx \leq c_\gamma r^{2-\gamma} \int_{B} \mathbf A e(\tilde v):e(\tilde v)\, dx\leq c_\gamma r^{2-\gamma},$$
 where the last inequality comes from \eqref{eq:contradiction}, possibly changing $c_\gamma$. Taking $\gamma=1/2$ and $r=a$ yields
$$\frac1a\int_{B(0,a)}\mathbf A e(\tilde v):e(\tilde v)\, dx \leq c_{1/2} a^{1/2},$$
which is against \eqref{eq:a1} provided we choose $a<(\frac{\tau_0}{2c_{1/2}})^2$.
\hfill$\Box$

%%%%%%%%%%%%%%%%%%%%%%%%%%%%%%%%%

\section{Appendix}\label{app}
The following lemma is an easy consequence of the coarea formula.

\begin{lemma}\label{coaire1L} Let $K\subset \R^2$ be a $\mathcal H^1$-rectifiable set. Then for all $0<s<r$ and $x_0\in \R^2$ we have 
\begin{equation}
\int_{s}^{r}\#(K\cap \partial B(x_0,t))\, dt \leq \mathcal{H}^1(K\cap B(x_0,r)\setminus B(x_0,s)).\label{coaire1}
\end{equation}
\end{lemma}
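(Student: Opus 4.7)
The plan is to deduce \eqref{coaire1} from the general coarea inequality (Eilenberg inequality) applied to the $1$-Lipschitz distance function $f(x) := |x-x_0|$ restricted to the rectifiable set $E := K \cap (B(x_0,r)\setminus B(x_0,s))$.

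First I would recall the coarea formula for rectifiable sets (see e.g. Federer, or \cite[Theorem 2.93]{afp}): for any Lipschitz function $f:\R^2 \to \R$ and any $\mathcal H^1$-rectifiable set $E \subset \R^2$, one has
$$\int_E |\nabla^E f|\, d\mathcal H^1 = \int_\R \mathcal H^0(E \cap f^{-1}(t))\, dt,$$
where $\nabla^E f$ denotes the tangential gradient of $f$ along the approximate tangent line to $E$. Since this gradient is a projection of $\nabla f$ onto the tangent, one has the pointwise bound $|\nabla^E f| \leq \mathrm{Lip}(f)$ $\mathcal H^1$-a.e.\ on $E$.

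Next I would specialize to $f(x)=|x-x_0|$, which is $1$-Lipschitz on $\R^2$, and to $E := K \cap (B(x_0,r)\setminus B(x_0,s))$, which is $\mathcal H^1$-rectifiable as a subset of the rectifiable set $K$. For every $t \in (s,r)$ the level set $f^{-1}(t) = \partial B(x_0,t)$ is entirely contained in $B(x_0,r)\setminus B(x_0,s)$, so
$$E \cap f^{-1}(t) = K \cap \partial B(x_0,t) \qquad \text{for all } t \in (s,r).$$
Moreover, from the coarea equality one deduces that $\mathcal H^0(K \cap \partial B(x_0,t)) < +\infty$ for a.e.\ $t$, so this cardinality coincides with the counting number $\#(K\cap \partial B(x_0,t))$ used in the statement.

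Combining these observations yields
$$\int_s^r \#(K \cap \partial B(x_0,t))\, dt \;=\; \int_s^r \mathcal H^0(E \cap f^{-1}(t))\, dt \;\leq\; \int_E |\nabla^E f|\, d\mathcal H^1 \;\leq\; \mathcal H^1(E),$$
which is exactly \eqref{coaire1}. There is no real obstacle here; the only care needed is to invoke the coarea formula in its version for rectifiable sets (not merely for Sobolev or BV functions on $\R^2$) and to check that the sphere $\partial B(x_0,t)$ lies inside the annular region for $t \in (s,r)$, so that intersecting $E$ with $f^{-1}(t)$ recovers the full trace $K \cap \partial B(x_0,t)$.
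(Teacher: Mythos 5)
Your proof is correct and follows essentially the same route as the paper: both apply the coarea formula for $\mathcal H^1$-rectifiable sets to $E=K\cap B(x_0,r)\setminus B(x_0,s)$ and the $1$-Lipschitz radial distance function, and then bound the tangential coarea factor by $1$. The only cosmetic difference is that the paper writes this factor explicitly as $|\tfrac{x}{|x|}\cdot\tau|\leq 1$ in terms of the approximate tangent direction, whereas you invoke the bound $|\nabla^E f|\leq \mathrm{Lip}(f)$ directly.
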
 

\begin{proof} 
Applying the coarea formula \cite[Theorem 2.91]{afp} to the $\mathcal H^1$-rectifiable set  $E:=K\cap B(x_0,r)\setminus B(x,s)$ and the Lipschitz function $f:x\mapsto |x|$  yields
$$\int_{s}^{r}\#(K\cap \partial B(x_0,t))\,dt=\int_{\R} \mathcal{H}^0(E\cap f^{-1}(t))  \; dt=\int_{E} {\bf J}d^E f \,d\mathcal{H}^1,$$
where, $\mathcal H^1$-a.e. in $E$,  ${\bf J}d^E f$ denotes the 1-dimensional coarea factor associated to the tangential differential $df^E$. Since $E$ admits an approximate tangent line oriented by a unit vector $\tau$ at $\mathcal H^1$-a.e. points, we deduce that
$${\bf J}d^E f_x = \left| \frac{x}{|x|}\cdot\tau \right| \leq 1 \quad \mathcal H^1\text{-a.e. in }E,$$
which leads to \eqref{coaire1}.
\end{proof}

We next recall a version of the Korn inequality in a rectangle.
\begin{lemma}[Korn's constant in a rectangle]\label{KornRect}
For $h\in (0,1)$, let $\Omega_h:=(-1,1)\times(-h,h)$ be a rectangle in $\R^2$ of height $2h$. There exists a constant $C>0$ (independent of $h$) such that for all $u \in LD(\Omega_h)$ one can find a skew symmetric matrix $R_h$ for which the following Korn inequality holds:
$$\int_{\Omega_h}|\nabla u-R_h|^2\,dx \leq \frac{C}{h^4} \int_{\Omega_h} |e(u)|^2 \,dx.$$
\end{lemma}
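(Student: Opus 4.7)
The plan is a covering-and-propagation argument, standard for degenerate Korn inequalities on thin domains. I will tile $\Omega_h$ by $N\approx 1/h$ overlapping squares of side $2h$, apply the scale-invariant Korn inequality on each one, and then propagate the resulting local skew-symmetric matrices along the chain of overlaps, using that $2\times 2$ skew-symmetric matrices form a one-dimensional space.

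Concretely, set $N:=\lceil 2/h\rceil$ and, with $a_i:=-1+(i+1)h$, define $Q_i:=(a_i-h,a_i+h)\times(-h,h)$ for $i=0,\ldots,N-1$. Then $\Omega_h\subset\bigcup_i Q_i$, consecutive squares overlap in a rectangle $O_i:=Q_i\cap Q_{i+1}=(a_i,a_i+h)\times(-h,h)$ of area $2h^2$, and $Q_i\cap Q_{i+2}$ has measure zero, so the covering has multiplicity $\leq 2$. On each $Q_i$ the classical Korn inequality yields a skew-symmetric $R_i$ with
$$\int_{Q_i}|\nabla u-R_i|^2\,dx\leq C\int_{Q_i}|e(u)|^2\,dx,$$
and the constant $C$ is universal: the inequality $\int|\nabla u-R|^2\leq C\int|e(u)|^2$ is scale-invariant, since a dilation $x=sy$ multiplies both sides by $s^2$ after absorbing the factor $s$ into the skew matrix.

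For the comparison step, the triangle inequality on $O_i$ gives
$$|O_i|\,|R_i-R_{i+1}|^2\leq 2\int_{O_i}|\nabla u-R_i|^2\,dx+2\int_{O_i}|\nabla u-R_{i+1}|^2\,dx\leq C\int_{Q_i\cup Q_{i+1}}|e(u)|^2\,dx,$$
hence $|R_i-R_{i+1}|^2\leq C h^{-2}\int_{Q_i\cup Q_{i+1}}|e(u)|^2\,dx$. Telescoping and Cauchy--Schwarz then give, for every $i$,
$$|R_i-R_0|^2\leq i\sum_{j=0}^{i-1}|R_j-R_{j+1}|^2\leq C\,\frac{N}{h^2}\int_{\Omega_h}|e(u)|^2\,dx\leq \frac{C}{h^3}\int_{\Omega_h}|e(u)|^2\,dx.$$

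Finally, summing the local estimate $|\nabla u-R_0|^2\leq 2|\nabla u-R_i|^2+2|R_i-R_0|^2$ over the $Q_i$, using $|Q_i|\leq 4h^2$ and bounded multiplicity,
$$\int_{\Omega_h}|\nabla u-R_0|^2\,dx\leq C\int_{\Omega_h}|e(u)|^2\,dx+C\,Nh^2\cdot h^{-3}\int_{\Omega_h}|e(u)|^2\,dx\leq \frac{C}{h^2}\int_{\Omega_h}|e(u)|^2\,dx.$$
Since $h\in(0,1)$ one has $h^{-2}\leq h^{-4}$, so setting $R_h:=R_0$ proves the lemma (in fact with the sharper $h^{-2}$). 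The main obstacle is the bookkeeping of the propagation step: each link loses a factor $h^{-2}$ from the small overlap area and a factor $N\sim h^{-1}$ compounds via Cauchy--Schwarz; only because skew-symmetric $2\times 2$ matrices form a one-dimensional space can these losses be summed cleanly along the chain without blowing up further.
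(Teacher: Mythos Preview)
Your proof is correct and takes a genuinely different route from the paper's. The paper argues by an anisotropic rescaling: it pulls $\Omega_h$ back to the unit square via $(x_1,x_2)\mapsto(x_1,x_2/h)$, defines $v_1(x)=u_1(x_1,hx_2)$, $v_2(x)=h\,u_2(x_1,hx_2)$, applies the standard Korn inequality to $v$ on $\Omega_1$, and then tracks how the anisotropic change of variables distorts each entry of $\nabla v$ and $e(v)$; the worst entry picks up a factor $h^{-4}$. Your chaining argument instead tiles $\Omega_h$ by overlapping $2h\times 2h$ squares, applies scale-invariant Korn locally, and propagates the skew matrices along the chain. This yields the sharper rate $h^{-2}$, which is in fact the optimal order for thin rectangles, so your method is not only different but quantitatively better; the paper's $h^{-4}$ is a crude bound that suffices for its purposes (it enters only through the constant in Lemma~\ref{extLem}).

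Two small remarks. First, as written your last square $Q_{N-1}$ may protrude past $x_1=1$; either intersect with $\Omega_h$ (the resulting piece still has aspect ratio uniformly bounded, so Korn applies with a universal constant) or simply shift the last center to $1-h$. Second, the one-dimensionality of skew-symmetric $2\times2$ matrices plays no role in your argument: the telescoping and Cauchy--Schwarz steps work verbatim for matrices in any finite-dimensional space, so that closing comment can be dropped.
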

\begin{proof} For $u\in LD(\Omega_h)$ we define the function $v\in LD(\Omega_1)$ by
$$
\begin{cases}
v_1(x_1,x_2):=u_1(x_1,hx_2) \\
v_2(x_1,x_2):=hu_2(x_1,hx_2)
\end{cases}
\quad \text{ for a.e. }x=(x_1,x_2) \in \Omega_1.
$$
We note that 
$$\nabla v(x_1,x_2)=
\left(
\begin{array}{cc}
\partial_1 u_1 & h\partial_2 u_1   \\
h \partial_1 u_2  & h^2 \partial_2 u_2 
\end{array}
\right)(x_1,hx_2),
$$
so that 
$$e(v)(x_1,x_2)=
\left(
\begin{array}{cc}
e_{11}(u)  & h e_{12}(u)   \\
h e_{21}(u)  & h^2 e_{22} (u)
\end{array}
\right)(x_1,hx_2).
$$
Applying Korn's inequality to $v$ in $\Omega_1$ yields
$$\int_{\Omega_1}|\nabla v-R|^2\,dx \leq C \int_{\Omega_1} |e(v)|^2 \,dx,$$
for some skew symmetric matrix $R$, and where $C>0$ is the Korn constant in the unit cube.  In view of the above computations and using that $h \in (0,1)$, we deduce that 
$$ |e(v)(x_1,x_2)|^2\leq |e(u)(x_1,hx_2)|^2,$$
while using that $R_{11}=R_{22}=0$ and $R_{12}=-R_{21}$ we get
\begin{eqnarray*}
&&|\nabla v(x_1,x_2)-R|^2\\
&&\hspace{2cm} =\big(|\partial_1 u_1|^2+ |h\partial_1 u_2+R_{12}|^2+|h \partial_2 u_1-R_{12}|^2+h^4|\partial_2 u_2|^2\big) (x_1,hx_2) \notag \\
&&\hspace{2cm}= \big(|\partial_1 u_1|^2+h^2 |\partial_1 u_2+h^{-1}R_{12}|^2+h^2 |\partial_2 u_1-h^{-1}R_{12}|^2+h^4|\partial_2 u_2|^2\big)(x_1,hx_2)  \notag \\
&&\hspace{2cm}\geq h^4|\nabla u(x_1,hx_2)-R_h|^2,\notag
\end{eqnarray*}
where 
\renewcommand*{\arraystretch}{1.5}
$$R_h:=h^{-1}R=
\begin{pmatrix}
0  & h^{-1}R_{12}   \\
-h^{-1} R_{12}  & 0
\end{pmatrix}
$$
is still a skew symmetric matrix. We then obtain that
$$h^4\int_{\Omega_1}|\nabla u(x_1,hx_2)-R_h|^2\,dx \leq C \int_{\Omega_1} |e(u)(x_1,hx_2)|^2 \,dx.$$
Finally, using the change of variables $(y_1,y_2)=(x_1,hx_2)$ we get 
$$h^4\int_{\Omega_h}|\nabla u-R_h|^2\,dy \leq C \int_{\Omega_h} |e(u)|^2\,dy,$$
which completes the proof of the Lemma.
\end{proof}
 
The next lemma is a standard flatness estimate on curves coming from Pythagoras Theorem.
 
 \begin{lemma}\label{pythag} 
 Let $\gamma:[0,1]\to \R^2$ be a curve with endpoints $z=\gamma(0)$ and $z'=\gamma(1)$, with image $\Gamma:=\gamma([0,1])$. Then
  \begin{equation}
 {\rm dist}(y,[z,z'])^2 \leq \frac{\mathcal{H}^1(\Gamma) \big(\mathcal{H}^1(\Gamma)-|z'-z|\big)}{2}  \quad \text{for all } y\in \Gamma. \label{estimgeom}
 \end{equation} 
 \end{lemma}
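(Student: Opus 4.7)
The plan is to establish, for every $y \in \Gamma$, the chord-arc inequality
\begin{equation}\label{eq:chord-arcPL}
|y-z| + |y-z'| \leq \mathcal H^1(\Gamma),
\end{equation}
and then to extract the flatness bound \eqref{estimgeom} from \eqref{eq:chord-arcPL} by purely planar considerations. For \eqref{eq:chord-arcPL}, writing $y = \gamma(t)$ for some $t \in [0,1]$, I would apply the elementary chord-arc estimate $|p - q| \leq \mathcal H^1(\eta)$ (valid for any rectifiable arc $\eta$ with endpoints $p$ and $q$) separately to the two sub-arcs $\gamma([0,t])$ and $\gamma([t,1])$, whose $\mathcal H^1$-measures add up to $\mathcal H^1(\Gamma)$ for an injective rectifiable parametrization---the regime in which the lemma is applied.

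Set $L := \mathcal H^1(\Gamma)$ and $\ell := |z' - z|$, and fix coordinates so that $z = (0,0)$, $z' = (\ell, 0)$ and $y = (\alpha, \delta)$. I would then split according to whether the orthogonal projection of $y$ onto the line through $z,z'$ lies in the segment $[z,z']$ or not. If $\alpha \in [0, \ell]$, then $\operatorname{dist}(y,[z,z']) = |\delta|$ and \eqref{eq:chord-arcPL} reads
$$\sqrt{\alpha^2 + \delta^2} + \sqrt{(\ell-\alpha)^2 + \delta^2} \leq L,$$
whose left-hand side is convex in $\alpha$ and symmetric about $\alpha = \ell/2$, hence minimized there; this yields $2\sqrt{\ell^2/4 + \delta^2} \leq L$, and therefore $4\delta^2 \leq L^2 - \ell^2$. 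If instead $\alpha \notin [0,\ell]$, say $\alpha < 0$ (the other subcase being symmetric), then $\operatorname{dist}(y,[z,z']) = |y - z|$; the identity $|y-z'|^2 - |y-z|^2 = \ell(\ell - 2\alpha) \geq \ell^2$, combined with \eqref{eq:chord-arcPL}, gives $|y-z| + \sqrt{|y-z|^2 + \ell^2} \leq L$, which after squaring leads to $|y-z| \leq (L^2 - \ell^2)/(2L)$ and thus $|y-z|^2 \leq (L^2-\ell^2)^2/(4L^2) \leq (L^2-\ell^2)/4$.

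In either case we obtain $\operatorname{dist}(y, [z,z'])^2 \leq (L^2 - \ell^2)/4 = (L-\ell)(L+\ell)/4$, which is bounded above by $L(L-\ell)/2$ since $L + \ell \leq 2L$. This is the claimed inequality. The main obstacle is really only bookkeeping in the case analysis; the underlying geometric picture is that \eqref{eq:chord-arcPL} confines $y$ to the closed elliptic region with foci $z, z'$ and major axis $L$, whose maximal distance to the chord $[z,z']$ equals the minor semi-axis $\sqrt{L^2 - \ell^2}/2$.
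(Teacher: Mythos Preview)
Your proof is correct and follows essentially the same idea as the paper's: both hinge on the chord--arc inequality $|y-z|+|y-z'|\le \mathcal H^1(\Gamma)$, which confines $y$ to the ellipse with foci $z,z'$ and major axis $\mathcal H^1(\Gamma)$, and then extract the bound $\operatorname{dist}(y,[z,z'])^2\le (L^2-\ell^2)/4$. The paper compresses your two cases into the single assertion $|z-\bar y|+|\bar y-z'|\ge 2\sqrt{(\ell/2)^2+d^2}$ (the apex of the isosceles triangle minimizes the focal sum among points at distance $d$ from the segment), while you make the case split $\alpha\in[0,\ell]$ versus $\alpha\notin[0,\ell]$ explicit; your treatment is slightly more careful in this respect, and your remark that the chord--arc step needs injectivity (which holds in every application of the lemma in the paper) is a point the paper leaves implicit.
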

 
\begin{proof}
Let $\bar y$ be a maximizer of the function $y \in \Gamma \mapsto {\rm dist}(y,[z,z'])$, i.e., $\bar y$ is the most distant point in $\Gamma$ to the segment $[z,z']$, and define $d=:{\rm dist}(\bar y,[z,z'])$. Let us consider the point $y' \in \R^2$ making $(z,z',y')$ an isosceles triangle with same height $d$  (See Figure \ref{fig5}). Denoting by $a:=|z-z'|/2 $ and $L := |y'-z|$, according to Pythagoras Theorem, we have 
 $$d^2 = L^2-a^2=(L -a) (L +a).$$
	
%%%%%%%%%%%%%%%%%
\begin{figure}[!ht]
\begin{center}
\scalebox{1} % Change this value to rescale the drawing.
{
\begin{pspicture}(0,-2.2388477)(10.72291,2.2138476)
\definecolor{color1685}{rgb}{0.4,0.4,0.4}
\definecolor{color1028}{rgb}{0.6,0.6,0.6}
\definecolor{color1608}{rgb}{0.4,0.4,1.0}
\psline[linewidth=0.03cm](2.2810156,0.65884763)(1.3210156,-1.6011523)
\psline[linewidth=0.03cm,linecolor=color1028](1.3210156,-1.6211524)(4.4210157,1.2388476)
\psdots[dotsize=0.16](1.3010156,-1.6011523)
\psdots[dotsize=0.16](8.381016,0.35884765)
\usefont{T1}{ptm}{m}{n}
\rput(1.3724707,-2.0161524){$z=\gamma(0)$}
\usefont{T1}{ptm}{m}{n}
\rput(0.7624707,-0.5161523){\color{color1608}$\Gamma$}
\psline[linewidth=0.03cm](1.3410156,-1.5811523)(8.381016,0.35884765)
\psline[linewidth=0.03cm](2.2810156,0.63884765)(8.361015,0.33884767)
\usefont{T1}{ptm}{m}{n}
\rput(4.352471,-0.016152345){\color{color1685}$d$}
\usefont{T1}{ptm}{m}{n}
\rput(2.0224707,0.90384763){$\bar y$}
\psline[linewidth=0.03cm](0.8610156,0.25884765)(7.7610154,2.1988478)
\psline[linewidth=0.03cm,linecolor=color1028](4.4410157,1.2388476)(8.401015,0.35884765)
\psline[linewidth=0.03cm,linecolor=color1028,arrowsize=0.05291667cm 2.0,arrowlength=1.4,arrowinset=0.4]{<->}(4.4410157,1.1788477)(4.9810157,-0.54115236)
\usefont{T1}{ptm}{m}{n}
\rput(3.9624708,1.4438477){$y'$}
\usefont{T1}{ptm}{m}{n}
\rput(6.702471,-0.61615235){\color{color1685}$a$}
\psline[linewidth=0.03cm,linecolor=color1028,arrowsize=0.05291667cm 2.0,arrowlength=1.4,arrowinset=0.4]{<->}(5.0610156,-0.7011523)(8.441015,0.25884765)
\usefont{T1}{ptm}{m}{n}
\rput(9.502471,-0.016152345){$z'=\gamma(1)$}
\psdots[dotsize=0.2](4.4210157,1.2588477)
\psdots[dotsize=0.16](2.2610157,0.63884765)
\psline[linewidth=0.03cm,linecolor=color1028,arrowsize=0.05291667cm 2.0,arrowlength=1.4,arrowinset=0.4]{<->}(4.5210156,1.3588476)(8.321015,0.47884765)
\usefont{T1}{ptm}{m}{n}
\rput(6.0724707,1.2638477){\color{color1685}$L$}
\pscustom[linewidth=0.04,linecolor=color1608]
{
\newpath
\moveto(1.2810156,-1.6211524)
\lineto(1.2710156,-1.5711523)
\curveto(1.2660156,-1.5461524)(1.2510157,-1.5011524)(1.2410157,-1.4811523)
\curveto(1.2310157,-1.4611523)(1.2160156,-1.4211524)(1.2110156,-1.4011524)
\curveto(1.2060156,-1.3811524)(1.1960156,-1.3361523)(1.1910156,-1.3111523)
\curveto(1.1860156,-1.2861524)(1.1810156,-1.2361523)(1.1810156,-1.2111523)
\curveto(1.1810156,-1.1861523)(1.1760156,-1.1411524)(1.1710156,-1.1211524)
\curveto(1.1660156,-1.1011523)(1.1560156,-1.0611523)(1.1510156,-1.0411524)
\curveto(1.1460156,-1.0211524)(1.1410156,-0.97615236)(1.1410156,-0.9511523)
\curveto(1.1410156,-0.92615235)(1.1410156,-0.87615234)(1.1410156,-0.85115236)
\curveto(1.1410156,-0.8261523)(1.1410156,-0.7761524)(1.1410156,-0.75115234)
\curveto(1.1410156,-0.72615236)(1.1410156,-0.67615235)(1.1410156,-0.6511524)
\curveto(1.1410156,-0.62615234)(1.1410156,-0.5761523)(1.1410156,-0.55115235)
\curveto(1.1410156,-0.5261524)(1.1510156,-0.47615233)(1.1610156,-0.45115235)
\curveto(1.1710156,-0.42615235)(1.1910156,-0.37615234)(1.2010156,-0.35115233)
\curveto(1.2110156,-0.32615235)(1.2310157,-0.27615234)(1.2410157,-0.25115234)
\curveto(1.2510157,-0.22615235)(1.2760156,-0.18115234)(1.2910156,-0.16115235)
\curveto(1.3060156,-0.14115234)(1.3360156,-0.101152346)(1.3510156,-0.08115234)
\curveto(1.3660157,-0.061152343)(1.3910156,-0.026152344)(1.4010156,-0.011152344)
\curveto(1.4110156,0.0038476563)(1.4360156,0.038847655)(1.4510156,0.058847655)
\curveto(1.4660156,0.078847654)(1.5010157,0.11884765)(1.5210156,0.13884765)
\curveto(1.5410156,0.15884766)(1.5810156,0.19384766)(1.6010156,0.20884766)
\curveto(1.6210157,0.22384766)(1.6560156,0.25384766)(1.6710156,0.26884764)
\curveto(1.6860156,0.28384766)(1.7210156,0.30384767)(1.7410157,0.30884767)
\curveto(1.7610157,0.31384766)(1.7960156,0.33384764)(1.8110156,0.34884766)
\curveto(1.8260156,0.36384764)(1.8560157,0.38884765)(1.8710157,0.39884767)
\curveto(1.8860157,0.40884766)(1.9210156,0.43384767)(1.9410156,0.44884765)
\curveto(1.9610156,0.46384767)(2.0010157,0.48384765)(2.0210156,0.48884764)
\curveto(2.0410156,0.49384767)(2.0810156,0.50884765)(2.1010156,0.51884764)
\curveto(2.1210155,0.52884763)(2.1610155,0.5488477)(2.1810157,0.55884767)
\curveto(2.2010157,0.56884766)(2.2460155,0.57884765)(2.2710156,0.57884765)
\curveto(2.2960157,0.57884765)(2.3460157,0.57384765)(2.3710155,0.56884766)
\curveto(2.3960156,0.56384766)(2.4410157,0.5538477)(2.4610157,0.5488477)
\curveto(2.4810157,0.5438477)(2.5210156,0.52384764)(2.5410156,0.50884765)
\curveto(2.5610156,0.49384767)(2.6010156,0.46884766)(2.6210155,0.45884764)
\curveto(2.6410155,0.44884765)(2.6760156,0.42884767)(2.6910157,0.41884765)
\curveto(2.7060156,0.40884766)(2.7360156,0.38384765)(2.7510157,0.36884767)
\curveto(2.7660155,0.35384765)(2.8010156,0.32884765)(2.8210156,0.31884766)
\curveto(2.8410156,0.30884767)(2.8810155,0.28884766)(2.9010155,0.27884766)
\curveto(2.9210157,0.26884764)(2.9560156,0.24884766)(2.9710157,0.23884766)
\curveto(2.9860156,0.22884765)(3.0160155,0.20384766)(3.0310156,0.18884766)
\curveto(3.0460157,0.17384766)(3.0760157,0.14884765)(3.0910156,0.13884765)
\curveto(3.1060157,0.12884766)(3.1360157,0.10384765)(3.1510155,0.08884766)
\curveto(3.1660156,0.07384766)(3.1960156,0.043847658)(3.2110157,0.028847657)
\curveto(3.2260156,0.013847657)(3.2560155,-0.016152345)(3.2710156,-0.031152343)
\curveto(3.2860155,-0.046152342)(3.3260157,-0.071152344)(3.3510156,-0.08115234)
\curveto(3.3760157,-0.09115234)(3.4110155,-0.116152346)(3.4210157,-0.13115235)
\curveto(3.4310157,-0.14615235)(3.4660156,-0.17115234)(3.4910157,-0.18115234)
\curveto(3.5160155,-0.19115235)(3.5610156,-0.21115234)(3.5810156,-0.22115235)
\curveto(3.6010156,-0.23115234)(3.6410155,-0.24615234)(3.6610155,-0.25115234)
\curveto(3.6810157,-0.25615233)(3.7210157,-0.27115235)(3.7410157,-0.28115234)
\curveto(3.7610157,-0.29115236)(3.8060157,-0.30615234)(3.8310156,-0.31115234)
\curveto(3.8560157,-0.31615233)(3.9010155,-0.33115235)(3.9210157,-0.34115234)
\curveto(3.9410157,-0.35115233)(3.9810157,-0.36615235)(4.0010157,-0.37115234)
\curveto(4.0210156,-0.37615234)(4.0660157,-0.38615236)(4.091016,-0.39115235)
\curveto(4.1160154,-0.39615235)(4.1610155,-0.40615234)(4.1810155,-0.41115233)
\curveto(4.2010155,-0.41615236)(4.2460155,-0.42615235)(4.2710156,-0.43115234)
\curveto(4.2960157,-0.43615234)(4.3460155,-0.44615233)(4.3710155,-0.45115235)
\curveto(4.3960156,-0.45615235)(4.4510155,-0.46115234)(4.4810157,-0.46115234)
\curveto(4.5110154,-0.46115234)(4.5660157,-0.46615234)(4.591016,-0.47115234)
\curveto(4.6160154,-0.47615233)(4.6660156,-0.47115234)(4.6910157,-0.46115234)
\curveto(4.716016,-0.45115235)(4.7610154,-0.43115234)(4.7810154,-0.42115235)
\curveto(4.801016,-0.41115233)(4.841016,-0.38115233)(4.861016,-0.36115235)
\curveto(4.881016,-0.34115234)(4.9210157,-0.31115234)(4.9410157,-0.30115235)
\curveto(4.9610157,-0.29115236)(4.9910154,-0.26115236)(5.0010157,-0.24115235)
\curveto(5.0110154,-0.22115235)(5.0410156,-0.17615235)(5.0610156,-0.15115234)
\curveto(5.0810156,-0.12615234)(5.1060157,-0.07615235)(5.111016,-0.051152345)
\curveto(5.1160154,-0.026152344)(5.131016,0.018847656)(5.1410155,0.038847655)
\curveto(5.1510158,0.058847655)(5.1610155,0.10384765)(5.1610155,0.12884766)
\curveto(5.1610155,0.15384765)(5.1660156,0.19884765)(5.1710157,0.21884766)
\curveto(5.176016,0.23884766)(5.1860156,0.28384766)(5.1910157,0.30884767)
\curveto(5.196016,0.33384764)(5.2010155,0.38384765)(5.2010155,0.40884766)
\curveto(5.2010155,0.43384767)(5.2010155,0.48384765)(5.2010155,0.50884765)
\curveto(5.2010155,0.53384763)(5.1910157,0.57384765)(5.1810155,0.58884764)
\curveto(5.1710157,0.6038477)(5.1460156,0.63884765)(5.131016,0.65884763)
\curveto(5.1160154,0.6788477)(5.0810156,0.70384765)(5.0610156,0.70884764)
\curveto(5.0410156,0.71384764)(4.9960155,0.71384764)(4.9710155,0.70884764)
\curveto(4.946016,0.70384765)(4.9110155,0.68384767)(4.9010158,0.6688477)
\curveto(4.8910155,0.65384763)(4.881016,0.6138477)(4.881016,0.58884764)
\curveto(4.881016,0.56384766)(4.8960156,0.51384765)(4.9110155,0.48884764)
\curveto(4.926016,0.46384767)(4.9510155,0.41884765)(4.9610157,0.39884767)
\curveto(4.9710155,0.37884766)(5.0010157,0.33884767)(5.0210156,0.31884766)
\curveto(5.0410156,0.29884765)(5.0810156,0.25384766)(5.1010156,0.22884765)
\curveto(5.1210155,0.20384766)(5.1610155,0.16884765)(5.1810155,0.15884766)
\curveto(5.2010155,0.14884765)(5.2460155,0.13384765)(5.2710156,0.12884766)
\curveto(5.2960157,0.123847656)(5.3460155,0.11884765)(5.3710155,0.11884765)
\curveto(5.3960156,0.11884765)(5.4510155,0.11884765)(5.4810157,0.11884765)
\curveto(5.5110154,0.11884765)(5.5760155,0.11884765)(5.611016,0.11884765)
\curveto(5.6460156,0.11884765)(5.7010155,0.123847656)(5.7210155,0.12884766)
\curveto(5.7410154,0.13384765)(5.7760158,0.14884765)(5.7910156,0.15884766)
\curveto(5.8060155,0.16884765)(5.8460155,0.18384765)(5.8710155,0.18884766)
\curveto(5.8960156,0.19384766)(5.9510155,0.20384766)(5.9810157,0.20884766)
\curveto(6.0110154,0.21384765)(6.0660157,0.22384766)(6.091016,0.22884765)
\curveto(6.1160154,0.23384766)(6.1610155,0.24384765)(6.1810155,0.24884766)
\curveto(6.2010155,0.25384766)(6.2410154,0.26884764)(6.2610154,0.27884766)
\curveto(6.2810154,0.28884766)(6.3260155,0.30884767)(6.3510156,0.31884766)
\curveto(6.3760157,0.32884765)(6.4210157,0.34884766)(6.4410157,0.35884765)
\curveto(6.4610157,0.36884767)(6.506016,0.38884765)(6.5310154,0.39884767)
\curveto(6.5560155,0.40884766)(6.6060157,0.42884767)(6.631016,0.43884766)
\curveto(6.6560154,0.44884765)(6.7060156,0.46884766)(6.7310157,0.47884765)
\curveto(6.756016,0.48884764)(6.8060155,0.50884765)(6.8310156,0.51884764)
\curveto(6.8560157,0.52884763)(6.9060154,0.5488477)(6.9310155,0.55884767)
\curveto(6.9560156,0.56884766)(7.0010157,0.58384764)(7.0210156,0.58884764)
\curveto(7.0410156,0.59384763)(7.0860157,0.6088477)(7.111016,0.61884767)
\curveto(7.1360154,0.62884766)(7.1860156,0.63884765)(7.2110157,0.63884765)
\curveto(7.236016,0.63884765)(7.2910156,0.64384764)(7.321016,0.64884764)
\curveto(7.3510156,0.65384763)(7.4110155,0.6638477)(7.4410157,0.6688477)
\curveto(7.4710155,0.6738477)(7.5260158,0.6788477)(7.551016,0.6788477)
\curveto(7.5760155,0.6788477)(7.6260157,0.6788477)(7.6510158,0.6788477)
\curveto(7.676016,0.6788477)(7.7260156,0.6788477)(7.7510157,0.6788477)
\curveto(7.7760158,0.6788477)(7.8260155,0.68384767)(7.8510156,0.68884766)
\curveto(7.8760157,0.69384766)(7.926016,0.69384766)(7.9510155,0.68884766)
\curveto(7.9760156,0.68384767)(8.031015,0.6738477)(8.061016,0.6688477)
\curveto(8.091016,0.6638477)(8.146015,0.65384763)(8.171016,0.64884764)
\curveto(8.196015,0.64384764)(8.241015,0.62884766)(8.261016,0.61884767)
\curveto(8.281015,0.6088477)(8.326015,0.59384763)(8.351016,0.58884764)
\curveto(8.376016,0.58384764)(8.416016,0.56884766)(8.431016,0.55884767)
\curveto(8.446015,0.5488477)(8.471016,0.51884764)(8.481015,0.49884766)
\curveto(8.491015,0.47884765)(8.496016,0.43884766)(8.491015,0.41884765)
\curveto(8.486015,0.39884767)(8.466016,0.36884767)(8.451015,0.35884765)
\curveto(8.436016,0.34884766)(8.401015,0.33884767)(8.341016,0.33884767)
}
\end{pspicture} 
}
\end{center}
\caption{{The height estimate from Pythagoras Theorem.}}	
\label{fig5}
\end{figure}
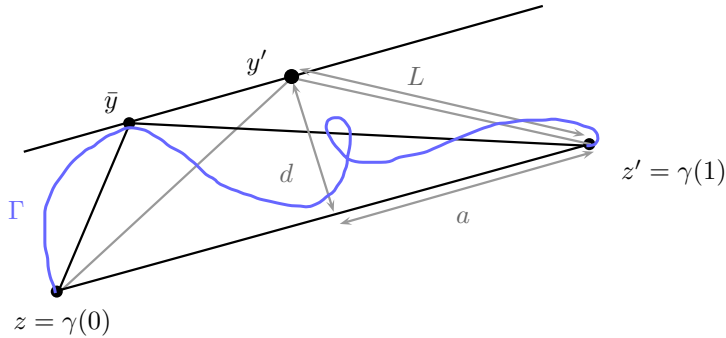
%%%%%%%%%%%%%%%%%

Thus $\mathcal{H}^1(\Gamma)\geq |z-\bar y| + |\bar y - z'|\geq 2L$ and $\mathcal H^1(\Gamma)\geq |z-z'|$ so that
\[d^2\leq \frac{1}{4}\left(\mathcal{H}^1(\Gamma )-|z-z'|\right) \left(\mathcal{H}^1(\Gamma )+|z-z'|\right)\leq  \frac{\mathcal{H}^1(\Gamma )\big(\mathcal{H}^1(\Gamma )-|z-z'|\big)}{2} ,\]
which proves \eqref{estimgeom}.
 \end{proof}

We conclude the appendix with the following  standard Lemma (see for instance \cite[proof of Corollary 33.50]{d}, \cite[proof of Theorem 5.5.]{dlm}, \cite[Section 10]{ddt} for a non-exhaustive list of similar results). Unfortunately, we could not find a precise reference   in the exact following elementary form, thus we provide an independent and complete proof for the reader's convenience.
 
 \begin{lemma}\label{c1estimates} 
 Let $K\subset \R^2$ be a closed set containing the origin and satisfying the following propery: there exist constants $C>0$, $r_0>0$ and $\alpha >0$ such that
$$\beta_K(x,r)\leq Cr^\alpha \quad \quad \text{for all } x\in K\cap B(0,1) \text{ and all } r\leq r_0.$$
Then there exists $a\in (0,1)$ (only depending on $C$, $r_0$, and $\alpha$) such that  $K\cap B(0,a)$ is a $10^{-2}$-Lipschitz graph, as well as a $\mathcal{C}^{1,\alpha}$ regular curve.
 \end{lemma}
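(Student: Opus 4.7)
The plan is to build at each point $x\in K\cap B(0,1)$ a tangent direction $\tau(x)$ and show that $x\mapsto\tau(x)$ is $\alpha$-Hölder continuous, then deduce both the Lipschitz graph property and the $\mathcal{C}^{1,\alpha}$ regularity. For each $x\in K\cap B(0,1)$ and $r\in(0,r_0)$, fix a line $L(x,r)$ through $x$ nearly realizing the infimum in the definition of $\beta_K(x,r)$, and denote its direction modulo $\pi$ by $\theta(x,r)$. The key comparison is between scales $r$ and $r/2$: the two lines $L(x,r)$ and $L(x,r/2)$ both pass through $x$ and are both within $Cr^{1+\alpha}$ of $K$ on $\overline B(x,r/2)$ in the Hausdorff sense, so picking a point of $L(x,r/2)$ at distance $r/4$ from $x$, finding a nearby point of $K\cap\overline B(x,r/2)$, and projecting it onto $L(x,r)$ forces the transverse component of $L(x,r/2)$ with respect to $L(x,r)$ to be $O(r^{1+\alpha})$ at distance $r/4$, hence
\[
|\theta(x,r)-\theta(x,r/2)|\le C_1 r^\alpha.
\]
Summing the geometric series $\sum_{k\ge 0} C_1(r/2^k)^\alpha$ shows that $\theta(x,r/2^k)$ is Cauchy, so it converges to some $\tau(x)$ with $|\theta(x,r)-\tau(x)|\le C_2 r^\alpha$ for every $r\in(0,r_0)$. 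An analogous triangle argument comparing $L(x,2d)$ and $L(y,2d)$ when $x,y\in K\cap B(0,1)$ with $|x-y|=d$ small—using that $y$ lies within $O(d^{1+\alpha})$ of $L(x,2d)$, and that a point of $L(y,2d)$ at distance $d/2$ from $y$ admits a nearby point of $K\cap\overline B(y,d)\subset\overline B(x,2d)$, which in turn admits a nearby point of $L(x,2d)$—yields $|\theta(x,2d)-\theta(y,2d)|\le C_3 d^\alpha$, whence
\[
|\tau(x)-\tau(y)|\le C_4 |x-y|^\alpha.
\]

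Next, rotate coordinates so that $\tau(0)=e_1$ and pick $a\in(0,r_0/2)$ so small that $C_4(2a)^\alpha\le 10^{-3}$; then $|\tau(x)-e_1|\le 10^{-3}$ for every $x\in K\cap B(0,a)$. For any $x,y\in K\cap B(0,a)$ with $d:=|x-y|$, decompose $y-x=t\,\tau(x)+\eta\,n(x)$ where $n(x)\perp\tau(x)$. Because $L(x,2d)$ has direction within $C_2(2d)^\alpha$ of $\tau(x)$ and $\mathrm{dist}(y,L(x,2d))\le O(d^{1+\alpha})$, an elementary triangle estimate gives $|\eta|\le C_5 d^{1+\alpha}$. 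The identity $t^2+\eta^2=d^2$ then forces $|t|\ge d/\sqrt{2}$ as soon as $a$ is chosen small enough. Consequently the chord $(y-x)/d$ makes angle at most $10^{-3}+O(d^\alpha)\le 10^{-2}$ with $e_1$, which (by shrinking $a$ once more if needed) proves that $K\cap B(0,a)$ is a $10^{-2}$-Lipschitz graph over the $e_1$-axis.

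Let $f$ denote the graph function and let $\theta(x)$ be the angle of $\tau(x)$ relative to $e_1$. Expressing the decomposition $y-x=t\,\tau(x)+\eta\,n(x)$ in coordinates gives $(y-x)_2-\tan\theta(x)\,(y-x)_1=\eta/\cos\theta(x)$, and the graph property yields $d\le 2|y_1-x_1|$, so
\[
\bigl|f(y_1)-f(x_1)-\tan\theta(x)\,(y_1-x_1)\bigr|\le C_6\,|y_1-x_1|^{1+\alpha}.
\]
This shows that $f$ is differentiable at every $x_1$ with $f'(x_1)=\tan\theta(x)$; combining the $\alpha$-Hölder continuity of $x\mapsto\tau(x)$ with the bi-Lipschitz relation between $x_1$ and $x$ induced by the graph then delivers the $\alpha$-Hölder continuity of $f'$, hence the $\mathcal{C}^{1,\alpha}$ regularity. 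The most delicate step is the first one: carefully tracking the geometric comparison of flatness lines at successive dyadic scales and at nearby base points, where one must keep the multiplicative constants under control so that the final Lipschitz norm is guaranteed to be below $10^{-2}$ for small enough $a$, and so that $a$ really depends only on $C$, $r_0$, and $\alpha$.
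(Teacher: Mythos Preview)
Your proof is correct and follows essentially the same route as the paper: build the tangent direction $\tau(x)$ by showing that the flatness lines at dyadic scales form a Cauchy sequence, prove that $x\mapsto\tau(x)$ is $\alpha$-H\"older by comparing scales comparable to $|x-y|$, then deduce the Lipschitz graph property and finally $\mathcal C^{1,\alpha}$ regularity. The only cosmetic differences are that in Step~3 you argue directly via the orthogonal decomposition $y-x=t\,\tau(x)+\eta\,n(x)$ with $|\eta|=O(d^{1+\alpha})$, whereas the paper argues by contradiction, and in Step~4 you write out the first-order expansion explicitly to identify $f'(x_1)=\tan\theta(x)$, whereas the paper invokes a smoothing argument; both are equivalent and your presentation is arguably cleaner. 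One small bookkeeping point: to apply $\beta_K(x,2d)\le C(2d)^\alpha$ you need $2d\le r_0$, so take $a\le r_0/4$ rather than $r_0/2$.
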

	 
\begin{proof}  For every $x\in K \cap B(0,1)$ and $0<r\leq r_0$, we denote as usual by $L(x,r)$ an affine line which approximates $K\cap B(x,r)$, i.e. such that 
\begin{equation}
\max \Big\{ \sup_{z\in K\cap \overline{B}(x,r)}{\rm dist }(z,L(x,r)), \sup_{z\in L(x,r)\cap \overline{B}(x,r)}{\rm dist }(z,K) \Big\}  \leq \beta(x,r)r \leq Cr^{1+\alpha}. \label{betabeta}
\end{equation}
In addition, we denote by $\tau(x,r) \in \mathbb{S}^1/\{\pm 1\}$ a non-oriented unit vector which is tangent to $L(x,r)$ and defined modulo $\pm1$.   We use in $\mathbb{S}^1/\{\pm 1\}$ the complete distance defined, for all $\tau_1,\tau_2 \in \mathbb{S}^1/\{\pm 1\}$, by
$$d_S(\tau_1,\tau_2):=\min(|\tau_1-\tau_2|,|\tau_1+\tau_2|).$$	  
	  
\medskip

\noindent \emph{Step 1. Existence of tangents.}   For all $k\in \mathbb{N}$ we denote by $r_k:=2^{-k}r_0$.  We claim that the sequence $(\tau(x,r_k))_{k \in \mathbb N}$ is a Cauchy sequence in $(\mathbb{S}^1/\{\pm 1\},d_S)$. For that purpose, we show that for all $k\geq 0$, and all $x\in K\cap B(0,1)$ we have
$$d_S\big(\tau(x,r_{k+1}),\tau(x,r_k)\big)\leq 9Cr_k^\alpha.$$
Indeed, let $z:= x+ \tau(x,r_{k+1})r_{k+1}\in L(x,r_{k+1})   \cap \overline B(x,r_{k+1})$. Because of \eqref{betabeta}, there exists $y \in K\cap \overline{B}(x,r_{k+1})$ such that $|z-y|\leq Cr_{k+1}^{1+\alpha}$ and in particular, 
\begin{equation}
r_{k+1}-Cr_{k+1}^{1+\alpha}\leq |y-x|\leq r_{k+1}. \label{estimnorm}
\end{equation}
Then, if we denote by $v:=\frac{y-x}{|y-x|}$, we have that  
\begin{eqnarray}
d_S\big(v,\tau(x,r_{k+1})\big)\leq |v-\tau(x,r_{k+1})|&=&\left|\frac{y-x}{|y-x|} - \frac{z-x}{r_{k+1}}\right| \notag \\
&\leq &\left|\frac{y-x}{|y-x|} - \frac{y-x}{r_{k+1}}\right|
 +\frac{1}{r_{k+1}}|z-y| \notag \\
&\leq & \frac{|r_{k+1}-|y-x||}{r_{k+1}} + Cr_{k+1}^\alpha \notag \\
&\leq & 2Cr_{k+1}^\alpha, \label{estimDS}
\end{eqnarray}
where we used \eqref{estimnorm} to get the last inequality.  Similarly, since   $y\in \overline B(x, r_{k+1}) \cap K \subset \overline B(x, r_{k}) \cap K$, there exists $z'\in L(x,r_{k})   \cap \overline B(x,r_k)$ such that $|y-z'|\leq Cr_{k}^{1+\alpha}$. By \eqref{estimnorm} again we can estimate 
$$ |z'-x|\leq |y-x|+|z'-y|\leq r_{k+1}+Cr_{k}^{1+\alpha}$$
and 
$$|z'-x|\geq  |y-x|-|z'-y|\geq r_{k+1}-Cr_{k+1}^{1+\alpha}- Cr_{k}^{1+\alpha}\geq r_{k+1} - 2Cr_{k}^{1+\alpha},$$
thus a computation similar to the one of \eqref{estimDS} leads to 
\begin{eqnarray}
d_S\big(v,\tau(x,r_{k})\big)\leq \left|v-\frac{z'-x}{|z'-x|}\right|&=&\left|\frac{y-x}{|y-x|} - \frac{z'-x}{|z'-x|}\right| \notag \\
&\leq & \left|\frac{y-x}{|y-x|} - \frac{y-x}{r_{k+1}}\right|+\left|\frac{y-x}{r_{k+1}} - \frac{z'-x}{r_{k+1}}\right|+\left|\frac{z'-x}{|z'-x|} - \frac{z'-x}{r_{k+1}}\right|   \notag.\\
&\leq & Cr_{k+1}^\alpha + C\frac{r_k^{1+\alpha}}{r_{k+1}} + 2C\frac{r_k^{1+\alpha}}{r_{k+1}}\leq 7Cr_k^\alpha. \label{estimDS2}
\end{eqnarray}
Gathering both the above inequalities, we  obtain
$$d_S(\tau(x,r_{k}),\tau(x,r_{k+1}))\leq d_S(\tau(x,r_{k}),v)+d_S(v,\tau(x,r_{k+1})) \leq 9Cr_k^\alpha=9Cr_0^{\alpha}2^{-k\alpha},$$
as claimed. It follows that for all $k,l\geq k_0$, 
	  
$$d_S\big(\tau(x,r_{k}),\tau(x,r_{l})\big) \leq \sum_{i=k_0}^{+\infty} 9Cr_0^{\alpha}2^{-i\alpha}= 2^{-k_0\alpha}\left(\frac{9Cr_0^{\alpha}}{1-2^{-\alpha}}\right).$$
Since the latter can be made arbitrarily small provided $k_0$ is large enough, we deduce that $\tau(x,r_k)$ is a Cauchy sequence in $\mathbb{S}^1/\{\pm 1\}$, and therefore, it converges to some vector denoted by $\tau(x)$. In particular, letting $l\to +\infty$, we get the following estimate for all $k\geq 0$	 
$$ d_S\big(\tau(x,r_{k}),\tau(x)\big) \leq C' r_k^\alpha,$$
where 
$$C':=\frac{9C}{1-2^{-\alpha}}.$$	
Moreover, it can be easily seen through the distance estimate \eqref{betabeta}, that $T_x:=x+\mathbb{R}\tau(x)$ is a  tangent line for the set $K$ at the point $x$.

\medskip

\noindent \emph{Step 2. H\"older estimate for tangents.} We now prove that the mapping $x\mapsto \tau(x)$ is H\"older continuous.  Let $x$ and $y$ be two different points of  $K \cap B(0,1)$ and let $\rho:=|y-x|$. Assume first that $\rho\leq r_0/4$ and let $k\in \mathbb{N}$ be such that 
$$r_{k+2}\leq \rho\leq r_{k+1}.$$
We have that
\begin{eqnarray}
d_S\big(\tau(x),\tau(y)\big)&\leq &d_S\big(\tau(x),\tau(x,r_k)\big)+d_S\big(\tau(x,r_{k}),\tau(y,r_{k})\big)+d_S\big(\tau(y,r_{k}),\tau(y)\big)\notag \\
&\leq & 2C'r_k^\alpha+ d_S\big(\tau(x,r_{k}),\tau(y,r_{k})\big). \label{etape11}
\end{eqnarray}
In order to estimate $d_S\big(\tau(x,r_{k}),\tau(y,r_{k})\big)$, we notice that  $y\in \overline B(x,r_k) \cap K$, thus there exists { $z \in L(x,r_{k}) \cap \overline B(x,r_k)$} such that $|y-z|\leq Cr_{k}^{1+\alpha}$.   Let us set $v:=\frac{y-x}{|y-x|}$, so that a computation similar to that of \eqref{estimDS} or \eqref{estimDS2} leads to
$$d_S(v,\tau(x,r_{k}))\leq 8Cr_{k}^\alpha,$$
and inverting the roles of $x$ and $y$
$$d_S(v,\tau(y,r_{k}))\leq 8Cr_{k}^\alpha.$$
Turning back to \eqref{etape11}, we deduce that 
\begin{equation}
d_S\big(\tau(x),\tau(y)\big)\leq 2C'r_k^\alpha +16Cr_k^\alpha \leq  16(C'+C)2^{2\alpha} r_{k+2}^\alpha \leq 4^{\alpha+2}(C'+C) |x-y|^\alpha.\label{holderestim0}
\end{equation}

In the case when $\rho\geq r_0/4$, we can simply estimate 
$$d_S\big(\tau(x),\tau(y)\big) \leq 2 \leq 2\frac{4^\alpha}{r_0^\alpha}|x-y|^\alpha,$$
which finally yields, for general $x,y \in K\cap B(0,1)$,
\begin{eqnarray} 
d_S\big(\tau(x),\tau(y)\big)\leq C'' |x-y|^\alpha, \label{holderestim}
\end{eqnarray}
with $C'':=\max\big(4^{\alpha+1} r_0^{-\alpha} ,4^{\alpha+2}(C'+C)\big)$.

In other words, we have proved that $K$ admits a tangent everywhere on $B(0,1)$ and that tangent lines behave nicely. We will prove now that $K\cap \overline B(0,a)$ is a  curve for $a$ small enough. Actually, a convenient way to prove this is to show the  stronger property that $K\cap \overline B(0,a)$ is a  Lipschitz graph for some $a\in(0,1)$ small enough.

\medskip

\noindent \emph{Step 3. $K\cap \overline B(0,a)$ is a Lipschitz graph.} We first show that for $a>0$ small enough (to be fixed later), the set $K\cap \overline B(0,a)$ is a  graph above the line $\R\tau(0)$, that we assume for simplicity to be oriented by $e_1:=\tau(0)$. Notice that for all $x\in K\cap \overline B(0,a)$,
\begin{equation}
d_S(\tau(x),e_1)\leq  C''a^{\alpha}, \label{Reiff}
\end{equation}
which means that for $a$ small, all the tangents are oriented almost horizontally in $K\cap \overline B(0,a)$.

We assume by contradiction that there exist two distinct points $x,y \in K\cap \overline B(0,a)$ such that $x_1=y_1$. Let $a\leq r_0/10$,
$\rho:=10|x-y|=10|x_2-y_2|\leq 20a$,  and let $k \in \mathbb N$ be such that 
$$r_{k+1}\leq \rho \leq r_k.$$
We denote by $\gamma_k\in [0,\pi/2]$ the angle between $e_1$ and $\tau(x,r_k)$. Since $$d_S(e_1,\tau(x,r_k))\leq C''a^\alpha+C'r_k^\alpha\leq C''a^\alpha+C'(40a)^\alpha,$$
for $a$ small enough it is not restrictive to assume $\gamma_k\in [0,\pi/4]$. We deduce that
$${\rm dist}(y,T_x)\leq \frac{{\rm dist}(y,L(x,r_k))}{\cos \gamma_k}\leq\sqrt{2}\,{\rm dist}(y,L(x,r_k))\leq \sqrt{2}Cr_{k}^{1+\alpha}\leq \sqrt{2}\, C (40a)^\alpha r_k.$$
Similarly, if $\gamma\in [0,\pi/2]$ stands for the angle between $e_1$ and $\tau(x)$, we have for $a$ small enough and for a universal constant $C'''>0$
$$|x_2-y_2|=|x-y|=\frac{{\rm dist}(y, T_x)}{\cos\gamma}\leq	2\, C (40a)^\alpha r_k \leq a^{\alpha}C'''|x_2-y_2|,$$
which is a contradiction for $a$ small enough (depending on $C'''$). { Therefore, $K\cap \overline B(0,a)$ must be a graph above the segment $\overline B(0,a) \cap \tau(0) \R$ identified to $[-a,a]$.} Now to prove that the graph is $10^{-3}$-Lipschitz for $a$ small enough, we can reproduce the same argument but  for $x,y \in K\cap \overline B(0,a)$ satisfying  now, by contradiction, $|x_2-y_2|> 10^{-3}|x_1-y_1|$.  
   
\medskip

\noindent \emph{Step 4. Conclusion.} We have proved that $K\cap \overline B(0,a)$ is the $10^{-3}$-Lipschitz graph of some function $f$ on $[-a,a]$. Moreover, the tangent line to the graph of $f$ at the point $(t,f(t))$, which exists for a.e.  $t\in [-a,a]$, coincides with  the tangent line $x+\R \tau(x)$ to $K$ at the point $x=(t,f(t))$. Since the map $x\mapsto \tau(x)$ is $\alpha$-H\"older continuous, it follows that the map $t\mapsto f'(t)$ coincides a.e. on $[-a,a]$ with an $\alpha$-H\"older continuous function. A smoothing argument then implies that $f\in \mathcal{C}^{1,\alpha}([-a,a])$, and $K\cap \overline B(0,a)$ is a $\mathcal{C}^{1,\alpha}$ curve.
\end{proof} 

\section*{Acknowledgements}
F.I. has been a recipient of scholarships from the Fondation Sciences Math\'ematiques de Paris, Emergence Sorbonne Universit\'es and from the S\'ephora-Berrebi Foundation. She gratefully acknowledges their support.

\end{document}